\documentclass[11pt, twoside]{amsart}
\usepackage[top=4cm, bottom=3cm, left=3cm, right=3cm, a4paper]{geometry}

\usepackage{graphicx}
\usepackage{amssymb}
\usepackage{epstopdf}
\usepackage{amsrefs}
\title{Iterated Hopf Ore Extensions over Group Rings}
\author{Can Hat\.{i}po\u{g}lu}
\address{College of Engineering and Technology, American University of the Middle East, Kuwait}
\email{osman.hatipoglu@aum.edu.kw}
\author{Christian Lomp}
\address{Department of Mathematics of the Faculty of Science and Center of Mathematics, University of Porto, Rua Campo Alegre, 687, 4169-007 Porto, Portugal}
\email{clomp@fc.up.pt}
\newtheorem{theorem}{Theorem}[section]
\newtheorem{lemma}[theorem]{Lemma}
\newtheorem{corollary}[theorem]{Corollary}
\newtheorem{proposition}[theorem]{Proposition}
\newtheorem{example}[theorem]{Example}
\newtheorem{remark}[theorem]{Remark}
\theoremstyle{definition}

\newcommand{\ZZ}{\mathbb{Z}}
\newcommand{\KK}{\mathbb{K}}

\newcommand{\DD}{\mathbb{D}}

\newcommand{\ou}{\overline{u}}
\newcommand{\ov}{\overline{v}}

\newcommand{\Irr}{\operatorname{Irr}}

\newcommand{\wind}[2]{[#1]_{#2}^{\sigma}}

\begin{document}
\begin{abstract}
    We introduce and study a class of Hopf algebras $H(G, \chi, \eta, b, c, \beta)$ which are two-step Ore extensions of a group algebra $\KK[G]$. This construction unifies and generalizes some known families of Hopf algebras such as generalized Taft algebras and Hopf algebras related to $\mathfrak{sl}_2$ constructed by Wang, Wu, and Tan. We analyze the ring theoretical properties of these algebras and classify all finite dimensional simple modules over them. We also consider the tensor products of simple modules in the zero derivation case. 
\end{abstract}

\keywords{ Hopf-Ore extension; Hopf algebras, irreducible representations;  16T05; 16P40; 16S15}
\thanks{The second author was partially supported by Centro de Matemática da Universidade do Porto, member of LASI, which is financed by
national funds through FCT – Fundação para a Ciência e a Tecnologia, I.P., under the project with reference UID/00144/2025, doi: https://doi.org/10.54499/UID/00144/2025. He would like to dedicate this paper to the cherished memory of his father, Gerhard Lomp.}
\maketitle

\section{Introduction}
Hopf algebras that are constructed as quotients of iterated Ore extensions have played an important role in the development of modern Hopf algebra theory. For example, Kaplansky's tenth conjecture (see \cite{Kaplansky}), which states that in each dimension $n$ there are (up to isomorphism) only finitely many $n$-dimensional Hopf algebras, has been shown to fail by constructing an infinite family of non-isomorphic quotients of Hopf Ore extensions with the same dimension (see \cites{Beattie, Gelaki, Muller}). 

The aim of this paper is to study the finite dimensional representation theory of a broad class of Hopf algebras arising as quotients of two-step iterated Hopf Ore extensions of group algebras. More precisely, we consider two-step Ore extensions of a group algebra $\KK[G]$ obtained by adjoining two skew-primitive elements with the commutation laws determined by some characters $\chi, \eta$ of $G$. Our results generalize those obtained in \cite{Wang-Wu-Tan}. The class of generalized Taft algebras (see \cite{Halbig}) also fits into our construction.

A central theme of the paper is a duality determined by the nature of the second Ore extension. When the derivation is zero, after a suitable change of variables the algebra reduces to a skew group ring $\KK[x,y]\#G$, and the representation theory can be analyzed using induction techniques for skew group algebras. In contrast, when the derivation is nonzero, the resulting algebra behaves more like a noncommutative differential operator ring. In this latter case, the defining relations force a strong rigidity condition: the characters governing the two Ore extensions must be inverses of each other. This constraint has significant consequences for the structure of finite dimensional simple modules and leads to explicit families of simple modules that do not occur in the skew group ring setting.

The paper is organized as follows. In Section 2 we construct the Hopf algebras $H = H(G, \chi, \eta, b, c,\beta)$ and record their basic structural properties. Section 3 discusses examples and connects our framework to existing constructions in the literature. Section 4 studies ring-theoretic and homological properties, including Hopf algebra quotients. Section 5 is devoted to the classification of finite dimensional simple modules, which is divided into the skew group ring case and the differential operator case. In the skew group ring case, simple modules are described via induction from one-dimensional modules over appropriate subalgebras, recovering and extending classical results. In the nonzero derivation case, we construct and classify torsion, torsion-free, and mixed simple modules, give precise isomorphism criteria, and show that all finite dimensional simple modules arise from these constructions. Finally, we close in Section 6 with the tensor products of simple modules in the zero derivation case.

\section{The Construction}

Throughout the text, we will assume that $\KK$ is an algebraically closed field of characteristic zero and all homomorphisms are $\KK$-linear. If $R$ is a $\KK$-Hopf algebra, we will denote its comultiplication by $\Delta$, its counit by $\epsilon$, and its antipode by $S$. Usually, we will use the simplified Sweedler notation $\Delta(h) = h_1 \otimes h_2$ to indicate the comultiplication of an element $h \in R$. A \emph{group-like} element is a nonzero element $g \in R$ which satisfies $\Delta(g) = g \otimes g$. The set of group-like elements $G(R)$ forms a group under the multiplication of $R$, where the inverse of $g \in G(R)$ is given by $S(g)$. Given $g, h \in G(R)$, we say that an element $x \in R$ is $(g,h)$-\emph{primitive} (resp. $(g,h)$-\emph{skew-primitive}) if $\Delta(x) = x \otimes g + h\otimes x$ (resp. $\Delta(x) = g \otimes x + x \otimes h$). Following \cite{Brown-O'Hagan-Zhang}, we will call a homomorphism of $\KK$-algebras $\chi:R \to \KK$ a \emph{character}. The set $\Xi(R)$ of characters of $R$ forms a group with the convolution product as the multiplication. Given $\chi \in \Xi(R)$, the map $\tau_\chi : R\to R$ defined by $\tau_\chi (r) = \chi(r_1)r_2$ for $r \in R$ is a $\KK$-automorphism of $R$, called a \emph{(left) winding automorphism} of $R$.

Recall that given an associative, unital $\KK$-algebra $R$, an automorphism $\sigma$, and a $\sigma$-derivation $\delta$ of $R$, the \emph{Ore extension} $R[x; \sigma, \delta]$ is defined as the free left $R$-module with basis $\{x^i \mid i \geq 0\}$ subject to
\[xr = \sigma(r)x + \delta(r)\]
for all $r \in R$. In case $R$ is a $\KK$-Hopf algebra, Panov's Theorem \cite{Panov} shows that $R[x; \sigma, \delta]$ is a Hopf algebra with $R$ as a Hopf subalgebra and $x$ being $(1,g)$-primitive such that 
\[\Delta(x) = x\otimes 1 + g\otimes x, \epsilon(x) = 0, S(x) = -g^{-1}x\]
for some group-like element $g \in G(R)$ if and only if there exists $\chi \in \Xi(R)$ with
\begin{itemize}
    \item[(i)] $\sigma = \tau_\chi$ and $\sigma(r) = g\chi(r_2)r_1 g^{-1}$ for all $r \in R$.
    \item[(ii)] $\Delta(\delta(r)) = \delta(r_1) \otimes r_2 + gr_1 \otimes \delta(r_2)$.
\end{itemize}
In this case, $R[x;\sigma, \delta]$ is called a \emph{Hopf Ore extension (HOE)} of $R$ (see \cite{Panov} or \cite[Theorem 2.4]{Brown-O'Hagan-Zhang}). If $G$ is a group, we let $Z(G)$ denote its center and $\widehat{G}$ denote its character group, that is, the group of homomorphisms $G \to \KK^{\times}$. 

The following theorem introduces the central object of our study:
\begin{theorem}[The Hopf algebras $H(G, \chi, \eta, b, c, \beta)$]\label{theorem:introducing H}
    For a group $G$ and pairs $(\chi, b), (\eta, c) \in \widehat{G} \times Z(G)$ such that $\eta(b) = \chi(c)^{-1}$ and $\beta \in \KK$, there exists a $\tau_\eta$-derivation $\delta$ such that the iterated Ore extension $H = \KK[G][x; \tau_\chi][y; \tau_\eta, \delta]$ is a Hopf algebra such that $x$ is $(1, b)$-primitive and $y$ is $(1,c)$-primitive. That is, 
    \[\Delta(x) = x \otimes 1 + b \otimes x,\quad \Delta(y) = y \otimes 1 + c \otimes y.\]
    In particular, $H$ is generated as an algebra by the elements $g$ of $G$ and $x$ and $y$ such that $xg = \chi(g)gx$ and $yg = \eta(g)gy$ for all $g \in G$ and 
    \[yx = qxy + \beta(1 - cb)\]
    where $q = \eta(b)$. We shall denote this Hopf algebra by $H(G, \chi, \eta, b, c, \beta)$.
\end{theorem}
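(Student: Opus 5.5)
Apply Panov's Theorem twice: first to $\KK[G]$ to adjoin $x$, then to $R=\KK[G][x;\tau_\chi]$ to adjoin $y$ with a suitable $\tau_\eta$-derivation $\delta$.

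\emph{First step.} For $R_0=\KK[G]$, $g\in G$ and the $(1,b)$-primitive $x$, we have $\tau_\chi(g)=\chi(g)g$. Since $b\in Z(G)$, also $b\chi(g)gb^{-1}=\chi(g)g$, so condition (i) holds, and (ii) is trivial with $\delta=0$. Hence $R=\KK[G][x;\tau_\chi]$ is a Hopf algebra with $\Delta(x)=x\otimes 1+b\otimes x$, and $\Xi(R)$ contains the extension of $\eta$ with $\eta(x)=0$. This extension is an algebra map: the relation $xg=\chi(g)gx$ is respected because both sides vanish.

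\emph{Second step, the automorphism.} On $R$, with $(1,c)$-primitive $y$, we compute $\tau_\eta(g)=\eta(g)g$ and $\tau_\eta(x)=\eta(x)1+\eta(b)x=qx$. Condition (i) requires $c\,\eta(r_2)r_1\,c^{-1}=\tau_\eta(r)$. On $g$ this holds because $c$ is central in $G$. On $x$ we get $c(x\eta(1)+b\eta(x))c^{-1}=cxc^{-1}=\chi(c)^{-1}x$. So we need $\chi(c)^{-1}=\eta(b)=q$, which is exactly the hypothesis. Both sides of (i) are algebra maps (conjugation composed with the right winding map), so checking generators suffices.

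\emph{Second step, the derivation.} Define $\delta$ on generators by $\delta(g)=0$ and $\delta(x)=\beta(1-cb)$, and extend it as a $\tau_\eta$-derivation, $\delta(rs)=\tau_\eta(r)\delta(s)+\delta(r)s$. This gives $yx=\tau_\eta(x)y+\delta(x)=qxy+\beta(1-cb)$. The main obstacle is well-definedness: $\delta$ is first defined on the free algebra and must preserve the relation $xg-\chi(g)gx$. We compute
\[\delta(xg)=\tau_\eta(x)\delta(g)+\delta(x)g=\beta(1-cb)g\]
and
\[\delta(\chi(g)gx)=\chi(g)\eta(g)g\beta(1-cb).\]
These agree only if $\chi(g)\eta(g)=1$ for all $g$ whenever $\beta\neq 0$. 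So either the statement implicitly requires $\eta=\chi^{-1}$ when $\beta\ne0$ (matching the introduction's remark that the characters must be inverse), or $\delta$ must be chosen more cleverly. I would proceed with the case split $\beta=0$ (take $\delta=0$) versus $\beta\neq0$ with $\eta\chi=1$, noting that in the latter case $q=\eta(b)=\chi(b)^{-1}$ is consistent with the hypothesis.

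\emph{Condition (ii).} Next check
\[\Delta(\delta(r))=\delta(r_1)\otimes r_2+cr_1\otimes\delta(r_2).\]
The set of $r$ satisfying this identity is closed under products, by a standard computation using that $\Delta\circ\tau_\eta=(\tau_\eta\otimes\mathrm{id})\Delta$ and condition (i). So it suffices to check generators. For $g$ both sides are $0$. For $x$, the left side is $\beta(1\otimes1-cb\otimes cb)$. The right side is
\[\delta(x)\otimes1+\delta(b)\otimes x+c\otimes\delta(x)+cb\otimes\delta(1)=\beta\bigl((1-cb)\otimes1+c\otimes(1-cb)\bigr).\]
This differs from the left side unless the skew-primitivity convention is adjusted. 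The computation then shows that one should take the element $1-bc$ with the Panov twist $g=c$ placed on the correct tensor factor, i.e.\ use the version $\delta(r_1)\otimes r_2+c r_1\otimes\delta(r_2)$ with $r_1\in\{x,b\}$ in the right order. I expect to recheck this bookkeeping carefully, since $\Delta(x)=x\otimes1+b\otimes x$ gives terms $\delta(x)\otimes 1+cb\otimes\delta(x)$, which does produce $\beta(1\otimes1-cb\otimes cb)$. Having verified (i) and (ii), Panov's Theorem gives the Hopf structure. The PBW basis $\{gx^iy^j\}$ and the stated relations follow from the Ore extension construction.
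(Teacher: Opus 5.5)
Your route is the paper's: Panov's theorem applied twice, condition (i) on $x$ reducing exactly to $\eta(b)=\chi(c)^{-1}$, and condition (ii) on $x$ reducing to $\delta(x)$ being $(1,cb)$-primitive, which $\beta(1-cb)$ is. Your final computation $\delta(x)\otimes 1+cb\otimes\delta(x)=\beta(1\otimes 1-cb\otimes cb)$ is the paper's. The display before it is wrong: the terms $c r_1\otimes\delta(r_2)$ coming from $x\otimes 1$ and $b\otimes x$ are $cx\otimes\delta(1)+cb\otimes\delta(x)$, not $c\otimes\delta(x)+cb\otimes\delta(1)$. Delete that display and the remark about adjusting conventions; no adjustment is needed.

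The new point in your proposal is the well-definedness check of $\delta$. It is correct, and it exposes a defect in the statement that the paper's proof passes over. The paper just prescribes $\delta|_{\KK[G]}=0$ and $\delta(x)=\beta(1-cb)$ and then checks Panov's condition (ii). It never verifies that this prescription extends to a $\tau_\eta$-derivation of $\KK[G][x;\tau_\chi]$.

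Do not hedge with ``or $\delta$ must be chosen more cleverly''. The required relations $yg=\eta(g)gy$ and $yx=qxy+\beta(1-cb)$ force $\delta(g)=0$ and $\delta(x)=\beta(1-cb)$, so there is no freedom. Your computation, using that $1-cb$ is central, then shows that the required Ore extension exists if and only if $(1-\chi(g)\eta(g))\beta(1-cb)=0$ for all $g\in G$. Sufficiency holds because the defining ideal is $\tau_\eta$-stable, so it is enough that $\delta$ kills the relators.

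Hence the theorem as stated is false in general. Take $G=\ZZ^2=\langle b,c\rangle$, $\chi=1$, $\eta(b)=1$, $\eta(c)=-1$, $\beta=1$. The hypothesis $\eta(b)=\chi(c)^{-1}$ holds, but conjugating $yx-xy=1-cb$ by $c$ gives $2(1-cb)=0$, which is impossible since $\KK[G]$ embeds. The paper records this constraint only afterwards, in Remark~\ref{remark:chi-eta-inverse}, and assumes $\eta=\chi^{-1}$ from then on.

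So you should prove the theorem with the extra hypothesis that $\chi\eta=1$ whenever $\beta(1-cb)\neq 0$. Split cases on $\beta(1-cb)$ rather than on $\beta$: your dichotomy omits the case $\beta\neq 0$, $cb=1$, where $\delta=0$ works for any $\eta$. With these corrections your argument is complete.
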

\begin{proof}
    By \cite[Proposition 2.2]{Panov}, any Hopf Ore extension $R[x; \sigma]$ of a group ring $R = \KK[G]$ is determined by a pair $(\chi, b) \in \widehat{G} \times Z(G)$ of a group character $\chi$ and a central element $b \in Z(G)$. Then $\sigma$ is determined by $\sigma(g) = \chi(g)g$ for all $g \in G$ and 
    \[xg = \chi(g)gx,\quad \Delta(x) = x \otimes 1 + b \otimes x,\quad \epsilon(x) = 0,\quad S(x) = -b^{-1}x.\]
    Hence, $A = R[x; \tau_\chi]$ is a Hopf Ore extension with $x$ being $(1,b)$-primitive and $R = \KK[G]$. Let $(\eta, c) \in \widehat{G} \times Z(G)$ and extend $\eta$ to an algebra homomorphism $\eta: A \to \KK$ by setting $\eta(x) = 0$. Let $\tau_\eta$ be the left winding automorphism of $A$ defined by $\tau_\eta(a) = \eta(a_1)a_2$ for all $a \in A$. In particular,
    \[\tau_\eta(x) = \eta(x)\cdot 1 + \eta(b) \cdot x = \eta(b)x.\]
    Moreover, 
    \[c(\eta(x_2)x_1)c^{-1} = c(\eta(1)x + \eta(x)b)c^{-1} = \chi(c)^{-1}x\]
    Thus, the first property of Panov's Theorem, that is, $\tau_\eta(x) = c(\eta(x_2)x_1)c^{-1} $ is satisfied if and only if $\eta(b) = \chi(c)^{-1}$. Now, we wish to define a $\tau_\eta$-derivation $\delta$ of $R[x; \tau_\chi]$ such that $\delta|_R = 0$ and $\delta(x) \in R^{+} = \ker\epsilon$. The second condition in Panov's Theorem says that we need
    \[\Delta(\delta(x)) = \delta(x) \otimes 1 + \delta(b) \otimes x + cx \otimes \delta(1) + cb \otimes \delta(x) = \delta(x) \otimes 1 + cb \otimes \delta(x).\]
    This means that $\delta(x)$ must be a $(1, cb)$-primitive element of $R$, which is fulfilled by setting $\delta(x) = \beta(1 - cb)$ for some $\beta \in \KK$.
\end{proof}
\begin{remark}\label{remark:chi-eta-inverse}
    As a natural consequence of the construction, in the case $\beta \neq 0$ and $cb \neq 1$, the characters $\chi$ and $\eta$ are strongly connected. For simplicity, let us write $e = \beta(1 - cb)$. For any $g \in G$, conjugating the relation $yx - qxy = e$ with $g$ (or $g^{-1}$) yields
    \[g^{-1}(yx - qxy)g = g^{-1}yxg - q g^{-1}xyg = g^{-1}eg = e.\]
    Expanding the terms using the relations $g^{-1}yg = \eta(g)y$ and $g^{-1}xg = \chi(g)x$, we get
    \[ (g^{-1}yg)(g^{-1}xg) - q(g^{-1}xg)(g^{-1}yg) = \eta(g)\chi(g)yx - q\chi(g)\eta(g)xy. \]
    Factoring out the scalar $\chi(g)\eta(g)$, this becomes
    \[ \chi(g)\eta(g)(yx - qxy) = \chi(g)\eta(g)e. \]
    Comparing this with the conjugated relation $e$, it follows that $(\chi(g)\eta(g) - 1)e = 0$. Since we assume $e \neq 0$, we must have $\chi(g)\eta(g) = 1$, or $\chi(g) = \eta(g)^{-1}$. Since this holds for every $g \in G$, the characters $\chi$ and $\eta$ must be inverses of each other.
\end{remark}
After this remark, in the nonzero derivation case, we will always take $\eta = \chi^{-1}$. Also, in this case, the assumption in Theorem~\ref{theorem:introducing H} that $\eta(b) = \chi(c)^{-1}$ is true if and only if $\chi(b) = \chi(c)$.

\section{Examples}\label{section:examples}
In this section, we provide a list of examples that fit into our framework. 
\begin{example}[Takeuchi's Hopf Algebra $U(1)$]
    In \cite{Takeuchi}, Takeuchi describes a family of Hopf algebras $U(n)$. In particular, $U(1)$ is generated by $c, x$, and $y$ such that $c$ is invertible and 
    \[xc = -cx, \quad yc = -cy, \quad yx = xy + c^{-1} - c\]
    while $c$ is group-like, $x$ is $(c, 1)$-primitive, and $y$ is $(1, c^{-1})$-primitive. After a change of variable $x' = xc^{-1}$, we get that $x'$ is $(1, c^{-1})$-primitive and 
    \[yx' = yxc^{-1} = xyc^{-1} + (c^{-2} - 1) = -x'y + (c^{-2} - 1).\]
    Thus $U(1) = H(\langle b \rangle, \chi, \chi, b, b, -1)$, where $b = c^{-1}$ and $\chi$ is defined by $\chi(b) = -1$.  
\end{example}
\begin{example}[Generalized Taft Algebras]
    In \cite{Halbig}, certain generalized Taft algebras have been constructed (see also \cite{Halbig-Krahmer}), which are defined as algebras of the form
    \[H = \left(\mathcal{B}(V)\# \KK[G] \right)^{cop}\]
    where $V$ is a Yetter-Drinfeld module of type $D_2$ over a group ring $\KK[G]$ of a finite cyclic group $G$, $\mathcal{B}(V)$ is the so-called Nichols algebra of $V$ and $\mathcal{B}(V) \# \KK[G]$ the so-called bosonization. Expressed in terms of generators and relations, one obtains that $\left(\mathcal{B}(V)\# \KK[G] \right)^{cop}$ is a quotient of a Hopf algebra of our type. More precisely, by \cite{Halbig}*{3.14}, $H$ is generated by $g, x$, and $y$ subject to the relations
    \[g^N = 1, \quad gy = q^{a_{22}}yg, \quad gx = q^{a_{12}}xg, \quad xy = q^{a_{11} a_{22}}yx\]
    as well as $x^{N_x} = 0 = y^{N_y}$, where $q$ is a primitive $N$th root of unity, $(a_{ij}) \in M_{2}(\ZZ_N)$ satisfies the properties
    \[a_{11} \neq a_{12}\ (\mathrm{mod }N), \quad a_{21} \neq a_{22}\ (\mathrm{mod } N),\quad a_{11}a_{22} + a_{12}a_{21} = 0\ (\mathrm{mod} N), \]
    while $N_y$ is the multiplicative order of $a_{11}a_{12}$ and $N_x$ that of $a_{21}a_{22}$ in $\ZZ_N$. Furthermore, $g$ is group-like, $x$ is $(g^{a_{11}}, 1)$-primitive and $y$ is $(g^{a_{21}}, 1)$-primitive.

    Let $G = \ZZ_N = \langle g \rangle$ be the cyclic group of order $N$ and define characters $\chi, \eta \in \widehat{G}$ by $\chi(g) = q^{-a_{22}}$ and $\eta(g) = q^{-a_{12}}$. Set $b = g^{-a_{21}}$ and $c = g^{-a_{11}}$. Then, by the hypothesis on $(a_{ij})$, we have 
    \[\chi(c^{-1}) = \chi(g^{a_{11}}) = q^{-a_{11}a_{22}} = q^{a_{12}a_{21}} = \eta(b).\]
    Note also that $\chi(c) = \chi(g)^{-a_{11}} = q^{a_{11}a_{22}}$. Let $H' = H(\langle g \rangle, \chi, \eta, b, c, 0)$ and denote its generators by $X$ and $Y$. Set $y' = Xb^{-1}$ and $x' = c^{-1}Y$. Then,
    \[x' y' = c^{-1} YXb^{-1} = \eta(b)c^{-1}XYb^{-1} = \eta(b)\chi(c)\eta(b^{-1})y' x' = \chi(c)y' x' = q^{a_{11}a_{22}}y'x'.\]
    Furthermore, $\Delta(y') = \Delta(X)\Delta(b^{-1}) = Xb^{-1} \otimes b^{-1} + 1 \otimes Xb^{-1} = y' \otimes g^{a_{21}} + 1 \otimes y'$ shows that $y'$ is $(g^{a_{21}}, 1)$-primitive. Similarly, $x'$ is $(g^{a_{11}}, 1)$-primitive. Thus, mapping $x' \mapsto x$ and $y' \mapsto y$ yields an isomorphism of Hopf algebras $H'/ \langle(x')^{N_x}, (y')^{N_y}\rangle \simeq H$. 
\end{example}
The following class of examples is one of our main motivations to introduce the Hopf algebras $H(G, \chi, \eta, b, c, \beta)$.
\begin{example}\label{example_sl2}
    Tan et al. studied in \cite{Wang-Wu-Tan} a class of pointed Hopf algebras which fit into our setting. We briefly revise their construction and show that after a change of variables, it is equal to the type of Hopf algebras constructed in the previous sections. Let $q \in \KK$ be a primitive $n$th root of unity, $1 \leq n_1 \leq n$ with $2 \nmid \gcd(n, n_1)$ and $\beta_1, \beta_2, \beta_3 \in \KK$. The Hopf algebra $H_\beta$ considered in \cite{Wang-Wu-Tan} is generated by $a, b, c, X, Y$ subject to 
    \[Xa = qaX,\quad Ya = q^{-1}aY,\quad YX = q^{-n_1}XY + \beta_3 (a^{2n_1} - bc),\]
    with $b$ and $c$ central and $X^n = \beta_1 (a^{nn_1} - b^n)$ and $Y^n = \beta_2 (a^{nn_1} - c^n)$. The coalgebra structure of $H_\beta$ is given by 
    \[\Delta(X) = X \otimes a^{n_1} + b \otimes X,\quad \Delta(Y) = Y \otimes a^{n_1} + c \otimes Y\]
    and $a, b, c$ being group-like elements.

    Let $G = \ZZ^3 = \langle a, e, f\rangle$ be the free abelian group in three generators $a, e, f$. Define the character $\chi \in \widehat{G}$ by $\chi(a) = q$, $\chi(e) = \chi(f) = q^{-n_1}$ and consider the Hopf algebra
    \[H = H(G, \chi, \chi^{-1}, e, f, \beta_3),\]
    which, as an algebra, is generated by $a, e, f, x, y$ subject to 
    \[xa = qax,\quad ya = q^{-1}ay,\quad yx = q^{n_1}xy + \beta_3 (1 - ef),\]
    as well as $xu = q^{-n_1}ux$, $yu = q^{n_1}uy$ for $u \in \{e, f\}$. The comultiplications of $x$ and $y$ are given by 
    \[\Delta(x) = x \otimes 1 + e \otimes x,\quad \Delta(y) = y \otimes 1 + f \otimes y\]
    and $a, e, f$ are group-like elements. Set $b:= ea^{n_1}$ and $c:= fa^{n_1}$. Then $\chi(b) = \chi(c) = 1$ shows that $b$ and $c$ commute with $x$ and $y$. Set $X:= xa^{n_1}$ and $Y:= a^{n_1}y$. Then,
    \[YX = a^{n_1}yxa^{n_1} = a^{n_1}(q^{n_1}xy + \beta_3 (1 - ef))a^{n_1} = q^{-n_1}XY + \beta_3 (a^{2n_1} - cb).\]
    Moreover, $\Delta(X) = \Delta(x)\Delta(a^{n_1}) = X \otimes a^{n_1} + b \otimes X$. Analogously, $\Delta(Y) = Y \otimes a^{n_1} + c \otimes Y$ holds. Since $H$ is generated as an algebra by $a, b, c, X, Y$, we conclude 
    \[H_\beta \simeq H(G, \chi, \chi^{-1}, e, f, \beta_3)/\langle X^n - \beta_1 (a^{nn_1} - b^n), Y^n - \beta_2 (a^{nn_1} - c^n) \rangle.\]
    The relations for $X^n$ and $Y^n$, written in terms of the generators $x, y, a, e, f$ are
    \[X^n - \beta_1 (a^{nn_1} - b^n) = (xa^{n_1})^n - \beta_1 (a^{nn_1} - b^n) = \pm a^{nn_1}(x^n \pm \beta_1 (1 - e^n))\]
    since $X^n = (xa^{n_1})^n = q^{n_1 n(n+1)/2}a^{nn_1}x^n = \pm a^{nn_1}x^n$, where the sign depends on the parity of $n_1 n(n + 1)/2$. Similarly, $Y^n - \beta_2 (a^{nn_1} - c^n) = \pm a^{nn_1}(y^n \pm \beta_2 (1 - f^n))$. More generally, we will see in Proposition~\ref{proposition:hopf algebra quotients} that for any group $G$ and Hopf algebra $H(G, \chi, \eta, b, c, \beta_3)$ with $q = \eta(b)$ a primitive $n$th root of unity, and for any scalars $\beta_1, \beta_2$, the ideal $I$ generated by $x^n - \beta_1 (1 - b^n)$ and $y^n - \beta_2 (1 - c^n)$ is a Hopf ideal. Such Hopf algebras directly generalize the class of Hopf algebras considered in \cite{Wang-Wu-Tan} and many properties will be shown in this wider context.
\end{example}
\begin{example}[Fantino \& Garc\'{i}a's pointed Hopf algebras over dihedral groups]
    In \cite[Theorem B]{Fantino-Garcia}, Fantino and Garc\'{i}a classified finite dimensional Hopf algebras over the dihedral group $\DD_m$ of order $2m$ with $m = 4t$ and $t \geq 3$ where 
    \[\DD_m = \langle (g, h) \mid g^2 = 1 = h^m,\ gh = h^{-1}g\rangle.\]
    One of these families of algebras, denoted by $A_{i,n}(\lambda)$, is generated by $g$ and $h$ and two other generators $x$ and $y$ such that 
    \begin{equation}\label{generators_Ain}
    xh = -hx,\quad yh = -hy,\quad yx = -xy,\quad yg = gx
    \end{equation}
    \[x^2 = \lambda(1 - h^{2i}),\quad y^2 = \lambda(1 - h^{-2i})\]
    and $x$ being $(1, h^i)$-primitive and $y$ being $(1, h^{-i})$-primitive, where $n = m/2 = 2t$ and $1 \leq i < n$ is odd. 

    Let $\chi$ be a character of $C_m = \langle h \rangle$, the cyclic group of order $m$, with $\chi(h) = -1$ and consider $H = H(C_m, \chi, \chi, h^i, h^{-i}, 0)$. Since $i$ is odd, $q = \chi(h^i) = (-1)^i = -1$ is the primitive root of unity of index 2. Then $H$ is generated over $\KK[C_m]$ by $u$ and $v$ subject to 
    \[uh = -hu,\quad vh = -hv,\quad vu = \chi(h^i)uv = -uv.\]
    The coalgebra structure is determined by $h$ being group-like, $u$ being $(1,h^i)$-primitive and $v$ being $(1, h^{-i})$-primitive. As we will see in Proposition~\ref{proposition:hopf algebra quotients},
    \[I = \langle u^2 - \lambda(1 - h^{2i}), v^2 - \lambda(1 - h^{-2i})\rangle\]
    is a Hopf ideal of $H$ and $\KK[C_m]$ is a subalgebra of $H(C_m, \chi, \chi, h^i, h^{-i}, 0)/I$. Denote by $\ou$ and $\ov$ the images of $u$ and $v$ in $H/I$ and let $C_2 = \langle g \rangle$ be the cyclic group of order $2$, which acts on $H/I$ by the automorphism $\hat{g}$ defined as 
    \[\hat{g}(h) = h^{-1}, \quad \hat{g}(\ou) = \ov, \quad \text{and} \quad \hat{g}(\ov) = \ou.\]
    The skew group ring $(H/I) \# C_2$ is then generated by $g, h, \ou, \ov$ which satisfy the relations (\ref{generators_Ain}) for $\ou$ instead of $x$ and $\ov$ instead of $y$, i.e.,
    \[A_{i,n}(\lambda) \simeq \left(H(C_m, \chi, \chi, h^i, h^{-i}, 0)/I\right) * C_2.\]
\end{example}
\section{Basic Properties}
In most of the examples discussed above, the group $G$ is finite or free abelian of finite rank. In such cases, the Hopf algebras possess nice ring-theoretical properties which we record here. The Gelfand-Kirillov dimension of a $\KK$-algebra $R$ is denoted by $\mathrm{GKdim}(R)$. In the rest of the paper, let $G$ be a group, $(\chi, b)$ and $(\eta, c)$ be two elements in $\widehat{G}\times Z(G)$ such that $\eta(b) = \chi(c)^{-1}$, and let $\beta \in \KK$ be a scalar.

The Hopf algebras $H(G, \chi, \eta, b, c, \beta)$ fall into two classes. The first class can be written as a skew group ring $\KK[x, y]\# G$ of a polynomial ring in two variables over $G$. 

\begin{proposition}\label{proposition:change of variables in the zero derivation case}
    Consider $H = H(G, \chi, \eta, b, c, \beta)$. If $\beta = 0$ or $c = b^{-1}$, then $H$ is generated over $\KK[G]$ by $x$ and $z = c^{-1}y$ such that $xz = zx$ and $z$ is $(c^{-1}, 1)$-primitive. Moreover, $H \simeq \KK[x, z]\# G$ as algebras, where $g \in G$ acts on $x$ via $\chi^{-1}$ and on $z$ via $\eta^{-1}$.
\end{proposition}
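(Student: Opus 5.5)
Under either hypothesis the derivation term in the defining relation vanishes: if $\beta = 0$ then $\beta(1-cb) = 0$, and if $c = b^{-1}$ then $1 - cb = 0$. So in both cases $H$ is the iterated Ore extension $\KK[G][x;\tau_\chi][y;\tau_\eta]$ with relations $xg = \chi(g)gx$, $yg = \eta(g)gy$ and $yx = qxy$, where $q = \eta(b) = \chi(c)^{-1}$. The proof then reduces to a change of variables and a comparison of PBW bases.

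First I compute the new variable $z = c^{-1}y$. Because $c$ is central in $G$, we get $zg = c^{-1}yg = \eta(g)c^{-1}gy = \eta(g)gz$, so $g^{-1}zg = \eta(g)z$. This means conjugation by $g$ acts on $z$ by $\eta(g)^{-1}$, i.e.\ $gzg^{-1} = \eta^{-1}(g)z$. Similarly $gxg^{-1} = \chi^{-1}(g)x$, which explains the phrase ``acts via $\chi^{-1}$'' in the statement. For commutativity, $xc^{-1} = \chi(c)^{-1}c^{-1}x$, so
\[xz = xc^{-1}y = \chi(c)^{-1}c^{-1}xy = q\,c^{-1}xy = c^{-1}yx = zx,\]
using $yx = qxy$. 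For the coproduct,
\[\Delta(z) = (c^{-1}\otimes c^{-1})(y\otimes 1 + c\otimes y) = z\otimes c^{-1} + 1\otimes z.\]
In the paper's convention (first slot $x\otimes g + h\otimes x$), this says $z$ is $(c^{-1},1)$-primitive. Since $y = cz$, the elements $x$ and $z$ together with $G$ generate $H$.

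For the isomorphism, I define an algebra map $\Phi\colon \KK[x,z]\# G \to H$ that is the identity on $G$ and sends $x\mapsto x$, $z\mapsto c^{-1}y$. The skew group ring relations are $gx = \chi^{-1}(g)xg$, $gz = \eta^{-1}(g)zg$ and $xz = zx$, and these hold in $H$ by the previous paragraph. So $\Phi$ is well defined by the universal property of the skew group ring, and it is surjective.

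Injectivity is the main point requiring care. The Ore extension $H$ has $\KK[G]$-basis $\{x^iy^j\}$. Since $x^iz^j = x^i(c^{-1}y)^j = \lambda_{ij}\, c^{-j}x^iy^j$ for some nonzero scalar $\lambda_{ij}$, the images $\Phi(g\,x^iz^j)$ equal nonzero multiples of $gc^{-j}x^iy^j$. As $g$ runs over $G$ with $i,j$ fixed, the elements $gc^{-j}$ again run over all of $G$. Hence $\Phi$ maps the basis $\{g x^i z^j\}$ of $\KK[x,z]\#G$ bijectively, up to nonzero scalars, onto the basis $\{g x^i y^j\}$ of $H$, and so $\Phi$ is an isomorphism.
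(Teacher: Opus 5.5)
Your proof is correct and follows essentially the same route as the paper: substitute $z=c^{-1}y$, check $xz=zx$ and $\Delta(z)=z\otimes c^{-1}+1\otimes z$, then use the basis $\{g x^i y^j\}$ to identify $H$ with $\KK[x,z]\#G$. Your explicit map $\Phi$, with the basis-to-basis check $g x^i z^j\mapsto \lambda_{ij}\,g c^{-j}x^iy^j$, spells out the injectivity that the paper asserts in one line.
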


\begin{proof}
    For $z = c^{-1}y$, we calculate 
    \begin{equation}\label{equation:first remark}
        zx = c^{-1}\eta(b)xy + \beta c^{-1}(1 - bc) = \eta(b)\chi(c)xz + \beta (c^{-1} - b) = xz + \beta (c^{-1} - b).
    \end{equation}
    Hence, if $\beta = 0$ or $c = b^{-1}$, then $zx = xz$. Thus, the subalgebra generated by $x$ and $z$ is a commutative polynomial ring, and since $c$ is invertible, $H$ is generated by $G \cup \{x, z\}$ as an algebra. As 
    \[
        \Delta(z) = (c^{-1} \otimes c^{-1})\cdot (y \otimes 1 + c \otimes y) = z \otimes c^{-1} + 1 \otimes z
    \]
    holds, $z$ is $(c^{-1}, 1)$-primitive. Since every element of $H$ can be written as a sum of monomials of the form $x^i y^j g$ for some $i, j \geq 0$ and $g \in G$, we conclude that $H$ is a skew group ring over $\KK[x, z]$, that is $H(G, \chi, \eta, b, c, \beta) \simeq \KK[x, z]\# G$.
\end{proof}
In the following result, we prove a similar change of basis result in the nonzero derivation case. Recall that
 from Remark~\ref{remark:chi-eta-inverse}, we know that in case $\beta(1-bc)\neq 0$, then $\eta=\chi^{-1}$. In particular $\chi(c)=\eta(b)^{-1}=\chi(b)$.  
 
 For $q\in \KK$ and $n\geq 0$, the $q$-number $[n]_q$ is defined as 
\begin{equation}
[n]_q = \frac{q^n - 1}{q - 1} = \sum_{i=0}^{n-1} q^i.
\end{equation}

\begin{lemma}\label{lemma:change of variables in the nonzero derivation case}
    Suppose that $\beta \neq 0$ and $c \neq b^{-1}$, and consider $H = H(G, \chi, \chi^{-1}, b, c, \beta)$. 
    Then $H$ is generated over $\KK[G]$ by $x$ and $z = \beta^{-1}c^{-1}y$ such that $z$ is $(c^{-1}, 1)$-primitive and $zx = xz + (c^{-1} - b)$. Moreover, for any integer $n > 0$:
    \begin{align}
        zx^n  &= x^nz + \left( [n]_q c^{-1} - [n]_{q^{-1}} b \right)x^{n-1}, \label{equation:commutation relations 1} \\
        xz^n &= z^n x  + \left([n]_q b - [n]_{q^{-1}} c^{-1} \right)z^{n-1}, \label{equation:commutation relations 2}
    \end{align}
    where $q = \chi(c)^{-1}$.
\end{lemma}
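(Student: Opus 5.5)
Take $z=\beta^{-1}c^{-1}y$. Since $\beta\neq 0$ and $c$ is invertible, $y=\beta c z$, so $G\cup\{x,z\}$ generates $H$ over $\KK[G]$. The coproduct is computed as in Proposition~\ref{proposition:change of variables in the zero derivation case}: $\Delta(z)=\beta^{-1}(c^{-1}\otimes c^{-1})(y\otimes 1+c\otimes y)=z\otimes c^{-1}+1\otimes z$, so $z$ is $(c^{-1},1)$-primitive.

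For the commutation relation we use $\eta=\chi^{-1}$ (Remark~\ref{remark:chi-eta-inverse}), so $\eta(b)=\chi(b)^{-1}=\chi(c)^{-1}=q$, and $xc^{-1}=\chi(c)^{-1}c^{-1}x$. Then
\[
zx=\beta^{-1}c^{-1}yx=\beta^{-1}c^{-1}\bigl(qxy+\beta(1-cb)\bigr)=q\,\chi(c)\,x\,\beta^{-1}c^{-1}y+(c^{-1}-b)=xz+(c^{-1}-b),
\]
because $q\chi(c)=1$. This is exactly equation~\eqref{equation:first remark}, now carried out with the rescaled $z$.

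Formula \eqref{equation:commutation relations 1} follows by induction on $n$; the case $n=1$ is the relation just shown. For the inductive step write $zx^{n+1}=(zx^n)x$ and apply the hypothesis to get $x^nzx+([n]_qc^{-1}-[n]_{q^{-1}}b)x^n$. Then $x^nzx=x^{n+1}z+x^n(c^{-1}-b)$. Moving the group elements to the left uses $xg=\chi(g)gx$:
\begin{itemize}
\item $x^nc^{-1}=\chi(c)^{-n}c^{-1}x^n=q^nc^{-1}x^n$;
\item $x^nb=\chi(b)^nbx^n=q^{-n}bx^n$.
\end{itemize}
The coefficient of $c^{-1}x^n$ becomes $[n]_q+q^n=[n+1]_q$ and that of $bx^n$ becomes $[n]_{q^{-1}}+q^{-n}=[n+1]_{q^{-1}}$, which is the claim.

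Formula \eqref{equation:commutation relations 2} is handled the same way. Rewrite the basic relation as $xz=zx+(b-c^{-1})$, and compute $xz^{n+1}=(xz^n)z$. Moving group elements past $z$ requires $zg=\chi^{-1}(g)gz$, since $y$ acts via $\eta=\chi^{-1}$ and $c^{-1}$ is central. Hence $z^nb=\chi(b)^{-n}bz^n=q^nbz^n$ and $z^nc^{-1}=\chi(c)^{n}c^{-1}z^n=q^{-n}c^{-1}z^n$, which produces $[n+1]_q b-[n+1]_{q^{-1}}c^{-1}$.

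The only delicate point is the bookkeeping of which power of $q$ or $q^{-1}$ appears when commuting $b$ and $c^{-1}$ past powers of $x$ or $z$. This uses $\chi(b)=\chi(c)=q^{-1}$, which in turn relies on Remark~\ref{remark:chi-eta-inverse}; everything else is routine.
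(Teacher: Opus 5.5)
Your proof is correct and follows essentially the same route as the paper. You derive $zx = xz + (c^{-1}-b)$ by the computation behind Equation~\eqref{equation:first remark}, and you prove \eqref{equation:commutation relations 1} by the same induction, moving $c^{-1}$ and $b$ past $x^n$ using $\chi(b)=\chi(c)=q^{-1}$. You also write out the induction for \eqref{equation:commutation relations 2} via $zg=\chi(g)^{-1}gz$, which the paper only asserts holds analogously.
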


\begin{proof}
    For $z = \beta^{-1}c^{-1}y$, we obtain from Equation~\eqref{equation:first remark} that $zx = xz + (c^{-1} - b)$. Since $y$ is $(1,c)$-primitive, it follows that $z$ is $(c^{-1}, 1)$-primitive. Equation~\eqref{equation:commutation relations 1} follows for $n=1$, since $[1]_q=1$.  Assume that \eqref{equation:commutation relations 1} holds for some $n \geq 1$. Then
    \begin{align*}
        zx^{n+1}    & =  \left( x^nz + \left( [n]_q c^{-1} - [n]_{q^{-1}} b \right)x^{n-1}\right) x\\
                    & = x^n (xz+c^{-1}-b) + \left( [n]_q c^{-1} - [n]_{q^{-1}} b \right)x^{n}\\
                    & = x^{n+1}z +\left(\left(\chi^{n}(c^{-1}) + [n]_q\right)c^{-1}  - \left(\chi^n(b) + [n]_{q^{-1}}\right)b\right) x^n\\
                    & = x^{n+1}z +\left( [n+1]_q c^{-1}  - [n+1]_{q^{-1}} b\right) x^n,
    \end{align*}
    where we use $q^{-1}=\chi(c) = \chi(b)$.
    One proves Equation~\eqref{equation:commutation relations 2} analogously.
\end{proof}

\begin{theorem}\label{theorem:properties of H}
    Consider $H = H(G, \chi, \eta, b, c, \beta)$. 
    \begin{enumerate}
        \item A $\KK$-basis for $H$ is $\{gx^i y^j \mid g \in G,\ i, j \geq 0\}$.
        \item The Hopf algebra $H$ falls into one of two isomorphism classes:
        \begin{itemize}
            \item If $\beta(1-bc) = 0$ (i.e., $\beta = 0$ or $c = b^{-1}$), then 
            \[H \simeq H(G, \chi, \eta, b, c, 0).\]
            \item If $\beta(1-bc) \neq 0$ (i.e., $\beta \neq 0$ and $c \neq b^{-1}$), then necessarily $\eta = \chi^{-1}$ and 
            \[H \simeq H(G, \chi, \chi^{-1}, b, c, 1).\]
        \end{itemize}
        \item $H$ is Noetherian provided $G$ is polycyclic-by-finite.
        \item $H$ is a domain if and only if $\KK[G]$ is a domain.
        \item If $G$ is a finitely generated group with $\mathrm{GKdim}(\KK[G]) = d < \infty$, then $H$ is an affine algebra of Gelfand-Kirillov dimension $d + 2$.
        \item The antipode of $H$ satisfies $S^{2}|_G = \mathrm{id}$ and 
        \[S^{2}(x) = \chi(b)x, \quad S^{2}(y) = \eta(c)y.\]
        In particular, $S$ has finite order if and only if $\chi(b)$ and $\eta(c)$ are roots of unity.
        \item $H$ is a pointed Hopf algebra with group-likes $G(H) = G$.
    \end{enumerate}
\end{theorem}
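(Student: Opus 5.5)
I treat the seven items separately, with the iterated Ore extension structure $H = \KK[G][x;\tau_\chi][y;\tau_\eta,\delta]$ from Theorem~\ref{theorem:introducing H} as the main tool.

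For (1), an Ore extension $R[x;\sigma,\delta]$ is a free left $R$-module on $\{x^i\}$. Applying this twice shows that $\{gx^iy^j\}$ is a basis; it is also free on the right after using that the automorphisms are bijective.

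For (2), the case $\beta(1-bc)=0$ is immediate, because then $\delta=0$ and the relation reads $yx=qxy$. When $\beta(1-bc)\neq 0$, Remark~\ref{remark:chi-eta-inverse} gives $\eta=\chi^{-1}$. I then rescale $y' = \beta^{-1}y$. It is still $(1,c)$-primitive, since the coproduct is linear, and it satisfies $y'x = qxy' + (1-cb)$. Hence $x\mapsto x$, $y\mapsto \beta^{-1}y$, $g\mapsto g$ defines a Hopf isomorphism $H(G,\chi,\chi^{-1},b,c,1)\to H$. To check that this map is well defined I verify the defining relations; bijectivity follows from the bases in (1).

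For (3), Hall's theorem says $\KK[G]$ is Noetherian when $G$ is polycyclic-by-finite, and the Hilbert basis theorem for Ore extensions with an automorphism applies twice.

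For (4), if $\KK[G]$ is a domain, then Ore extensions of a domain by an injective endomorphism are domains, by comparing leading coefficients. Conversely, $\KK[G]$ is a subring of $H$.

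For (5), $H$ is generated by the finitely many generators of $G$ together with $x$ and $y$, so it is affine. For the GK dimension I would use the standard result that $\mathrm{GKdim}\,R[x;\sigma,\delta] = \mathrm{GKdim}\,R+1$ when $\sigma$ is locally algebraic, i.e.\ every finite-dimensional subspace lies in a finite-dimensional $\sigma$-stable subspace; see Huh--Kim or Zhang's result on locally algebraic automorphisms. The winding automorphism $\tau_\chi$ only rescales group elements, so $\mathrm{span}(g)$ is stable. For $\tau_\eta$ on $A=\KK[G][x;\tau_\chi]$, the subspaces $V x^i$ with $V$ a finite-dimensional subspace of $\KK[G]$ are stable, since $\tau_\eta(gx^i)=\eta(g)\eta(b)^i gx^i$. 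I also need $\delta$ to be compatible with the filtration. Here $\delta(x^n)$ lands in $\KK[G]x^{n-1}$ (compare Lemma~\ref{lemma:change of variables in the nonzero derivation case}), so the filtration by total degree in $x$ and $y$ tensored with a generating subspace of $G$ gives the standard argument. This step I expect to be the main obstacle: the cited result must genuinely apply to the second, differential extension, and the GK dimension of group algebras must behave as required. If that fails, I would instead use the associated graded algebra with respect to total degree in $x,y$. It is $\KK[G][x;\tau_\chi][y;\tau_\eta]$, and its growth is computed from explicit bases: with $V$ a generating subspace, $\dim V^n$ is controlled by $\dim(\text{subspace of } \KK[G])\cdot n^2$ up to shifts.

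For (6), from $S(x)=-b^{-1}x$ we get $S^2(x) = -S(x)S(b^{-1}) = b^{-1}xb = \chi(b)x$. Similarly $S^2(y)=c^{-1}yc=\eta(c)y$, and $S^2(g)=g$. Since $S^2$ is an algebra automorphism that is diagonal on the basis from (1), its order is finite exactly when both scalars are roots of unity.

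For (7), $H$ is generated by group-likes and skew-primitives, so by the standard result (Montgomery, Lemma 5.5.1) it is pointed with coradical $\KK[G]$. Thus $G(H)=G$. Alternatively, one can use the coradical filtration induced by degree in $x$ and $y$.
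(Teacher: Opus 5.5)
Your proposal is correct. Items (1), (3), (4) and (6) follow the paper's proof, which calls (1) and (4) clear; the differences lie in (2), (5) and (7).

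In (2) the paper uses the substitutions $z=c^{-1}y$ (Proposition~\ref{proposition:change of variables in the zero derivation case}) and $z=\beta^{-1}c^{-1}y$ (Lemma~\ref{lemma:change of variables in the nonzero derivation case}), which present $H$ through a $(c^{-1},1)$-primitive generator. You instead observe that $\delta=0$ when $\beta(1-bc)=0$, so the first bullet is even an equality, and you rescale $y\mapsto\beta^{-1}y$, which keeps $y$ $(1,c)$-primitive. This matches the defining presentations of $H(G,\chi,\eta,b,c,0)$ and $H(G,\chi,\chi^{-1},b,c,1)$ exactly, with no intermediate presentation.

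In (5) the paper simply cites the Brown--O'Hagan--Zhang--Zhuang theorem that a Hopf Ore extension of an affine Hopf algebra of finite GK-dimension raises the GK-dimension by one; you verify the growth by hand. For your first route, what the standard upper bound actually needs is a finite-dimensional generating subspace of $\KK[G][x;\tau_\chi]$ stable under both $\tau_\eta$ and $\delta$. Take $V_0$ spanned by $1$, $cb$, the generators of $G$ and their inverses; then $V_0+\KK x$ is such a subspace, because $\delta(x)=\beta(1-cb)\in V_0$. Your explicit count is simpler still. For $W=V_0+\KK x+\KK y$, the basis in (1) gives $\dim W^{n}\le\binom{n+2}{2}\dim V_0^{n}\le\dim W^{2n}$, hence $\mathrm{GKdim}\,H=d+2$. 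This works directly on $H$, so the detour through $\operatorname{gr}H$ is unnecessary; comparing $\operatorname{gr}H$ with $H$ would in any case need extra justification, since the filtration pieces are infinite-dimensional when $G$ is infinite.

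In (7) you use Montgomery's lemma on Hopf algebras generated by group-likes and skew-primitives, whereas the paper cites the fact that Hopf Ore extensions of pointed Hopf algebras are pointed with the same group-likes; both work.

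Overall, your version is more self-contained and elementary, while the paper's is shorter because it leans on the general theory of Hopf Ore extensions.
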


\begin{proof}
    (1) This is clear since $H$ is an iterated Ore extension.

    (2) If $\beta = 0$ or $c = b^{-1}$, then $\beta(1-bc) = 0$. By Proposition~\ref{proposition:change of variables in the zero derivation case}, the substitution $z = c^{-1}y$ (or $z=y$ if $\beta \neq 0$) shows $H \simeq H(G, \chi, \eta, b, c, 0)$. 
    
    Now assume $\beta \neq 0$ and $c \neq b^{-1}$. As we remarked earlier, in this case $\eta = \chi^{-1}$. From Lemma~\ref{lemma:change of variables in the nonzero derivation case}, $H \simeq H(G, \chi, \chi^{-1}, b, c, 1)$.

    (3) If $G$ is polycyclic-by-finite, then $\KK[G]$ is Noetherian (\cite[1.5.12]{McConnell-Robson}). It follows that $H$ is Noetherian by Hilbert's Basis Theorem.

    (4) This is clear.

    (5) If $G$ is a finitely generated group, then $\KK[G]$ and hence $H$ are affine $\KK$-algebras. By \cite[Theorem 2.6]{Brown-O'Hagan-Zhang}, $\mathrm{GKdim}(H) = d + 2$.

    (6) As $x$ is $(1, b)$-primitive, $S(x) = -b^{-1}x$. Hence $S^{2}(x) = b^{-1}xb = \chi(b)x$, so there exists $n$ such that $S^n(x) = x$ if and only if $n$ is even and $\chi(b)$ is a root of unity. Similarly, $S^n(y) = y$ if and only if $n$ is even and $\eta(c)$ is a root of unity. Conversely, if $\chi(b)$ and $\eta(c)$ are roots of unity of orders $k$ and $l$ respectively, then $S$ has order $2\mathrm{lcm}(k,l)$.

    (7) This follows from \cite[Proposition 2.5]{Brown-O'Hagan-Zhang}.
\end{proof}
\begin{corollary}
    Consider $H = H(G, \chi, \eta, b, c, \beta)$ such that $G$ is finitely generated abelian and $\chi$ and $\eta$ have finite order. If  $\beta(1-bc)=0$  or $q = \eta(b) \neq 1$, then $H$ is finitely generated over a central affine subalgebra. In particular, $H$ is an affine Noetherian PI-algebra, and all simple left or right $H$-modules are finite dimensional.
\end{corollary}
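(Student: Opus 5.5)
The plan is to exhibit an explicit central subalgebra $Z_0$ of $H$, generated by finitely many elements, over which $H$ is a finitely generated module. Since $G$ is finitely generated abelian and $\chi,\eta$ have finite order, the subgroup $N=\ker\chi\cap\ker\eta$ has finite index in $G$ and is finitely generated. Elements of $N$ commute with $x$ and $y$, and they commute with $G$ because $G$ is abelian, so $\KK[N]$ is central and affine. Next I would find central powers of $x$ and $y$.

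\emph{Zero derivation case.} By Theorem~\ref{theorem:properties of H}(2), we may take $\beta=0$. Then $H\simeq \KK[x,z]\#G$ by Proposition~\ref{proposition:change of variables in the zero derivation case}. Let $m$ be a common multiple of the orders of $\chi$ and $\eta$. Then $x^m$ and $z^m$ commute with $G$, and they commute with $x$ and $z$. Put $Z_0=\KK[N][x^m,z^m]$. This is a central affine commutative subalgebra, and $H$ is generated as a $Z_0$-module by the finitely many elements $g x^i z^j$, where $g$ runs over coset representatives of $G/N$ and $0\le i,j<m$.

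\emph{Nonzero derivation case with $q\neq 1$.} Here $\eta=\chi^{-1}$, and we work with $x,z$ as in Lemma~\ref{lemma:change of variables in the nonzero derivation case}. The key step is to choose $n$ so that $x^n$ and $z^n$ become central. Since $q=\chi(c)^{-1}=\chi(b)^{-1}$ and $\chi$ has finite order, $q$ is a root of unity. Because $q\neq1$, its order $\ell>1$, and $[\ell]_q=[\ell]_{q^{-1}}=0$. Then \eqref{equation:commutation relations 1} and \eqref{equation:commutation relations 2} give $zx^{\ell}=x^{\ell}z$ and $xz^\ell=z^\ell x$. The same holds for every multiple $n$ of $\ell$, since $[n]_q=0$ whenever $\ell\mid n$. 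I would take $n$ to be a common multiple of $\ell$ and the order of $\chi$. Then $x^n$ and $z^n$ commute with $G$ and with both of $x$ and $z$, and $x^n$ commutes with $z^n$. So $Z_0=\KK[N][x^n,z^n]$ is central and affine. Using the PBW basis from Theorem~\ref{theorem:properties of H}(1), rewritten in $x,z$, $H$ is spanned over $Z_0$ by the elements $g x^i z^j$ with $g$ among coset representatives of $G/N$ and $0\le i,j<n$.

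The hypothesis $q\neq1$ is exactly what makes $[n]_q$ vanish; I expect this to be the main (though mild) obstacle. When $q=1$, the coefficient $[n]_q c^{-1}-[n]_{q^{-1}}b=n(c^{-1}-b)$ never vanishes, which is why that case is excluded. Once $Z_0$ is in place, the consequences are standard. $Z_0$ is a commutative affine algebra, hence Noetherian, so $H$ is a finitely generated module over a Noetherian central subalgebra. Therefore $H$ is Noetherian and PI, and it is affine because $G$ is finitely generated. Finally, by Kaplansky's theorem, or directly by Schur's lemma applied to the central subalgebra together with the Nullstellensatz, every simple module $M$ has $Z_0$ acting through a maximal ideal with residue field $\KK$. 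Hence $M$ is a finitely generated module over $H/\mathfrak m H$, which is finite dimensional, so $M$ is finite dimensional.
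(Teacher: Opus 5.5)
Your proof is correct and follows essentially the paper's argument. The paper also builds a central affine subalgebra from a finite-index subgroup of $G$ together with $x^n$ and $z^n$: you take $\KK[N][x^n,z^n]$ where the paper takes $\KK[g_1^{\pm n},\ldots,g_d^{\pm n},x^n,z^n]$. In the nonzero derivation case it gets centrality from the commutation relations of Lemma~\ref{lemma:change of variables in the nonzero derivation case}, and for the finite-dimensionality of simple modules it cites McConnell--Robson 13.10.3 where you argue directly.

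You also correctly use $[\ell]_q=[\ell]_{q^{-1}}=0$ for $q\neq 1$. The paper's proof at that point has the slip $[n]_q=1=[n]_{q^{-1}}$.
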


\begin{proof}
    Let $g_1, \ldots, g_d$ be a set of generators of $G$. Since $\chi$ and $\eta$ have finite order, we have $\chi(g)^{n} = 1 = \eta(g)^n$ for all $g \in G$, where $n$ is the least common multiple of the orders of $\chi$ and $\eta$. Thus, $g_{i}^{n}$ commutes with $x$ and $y$ for all $i = 1, \ldots, d$. 

    If $\beta(1-bc)=0$, then by Proposition~\ref{proposition:change of variables in the zero derivation case}, $H \simeq H(G, \chi, \eta, b, c, 0) \simeq \KK[x, z]\# G$ for $z = c^{-1}y$. Thus, $x$ and $z$ commute, so $x^n$ and $z^n$ are central in $H$ and $H$ contains the central subalgebra $C = \KK[g_{1}^{\pm n}, \ldots, g_{d}^{\pm n}, x^n, z^n]$. 

    In the case $\beta \neq 0$ and $c^{-1} \neq b$, by Theorem~\ref{theorem:properties of H}, $H \simeq H(G, \chi, \chi^{-1}, b, c, 1)$. 
    Since $q\neq 1$ and $q^n=1$, we obtain $[n]_q=1=[n]_{q^{-1}}$. From Equations~\eqref{equation:commutation relations 1} and \eqref{equation:commutation relations 2}, we conclude $zx^n = x^nz$ and $xz^n = z^nx$. Thus, $x^n$ and $z^n$ are central and  $C = \KK[g_{1}^{\pm n}, \ldots, g_{d}^{\pm n}, x^n, z^n]$ is a central subalgebra of $H$.

    In each case, $H$ is finitely generated over the affine central subalgebra $C$. Therefore, $H$ is an affine PI-algebra, and any simple $H$-module is finite dimensional over $\KK$ by \cite[13.10.3]{McConnell-Robson}.
\end{proof}
We have seen that some of the examples in Section~\ref{section:examples}, like Example~\ref{example_sl2}, were quotients of Hopf algebras of the type $H(G, \chi, \eta, b, c, \beta)$. We collect basic properties of these quotients here.

\begin{proposition}\label{proposition:hopf algebra quotients}
    Consider $H = H(G, \chi, \eta, b, c, \beta)$ and let $p = \chi(b)$ and $r = \eta(c)$. Suppose that $p$ is a primitive $n$th root of unity and $r$ is a primitive $m$-th root of unity for some $n, m > 1$. Then for any $\lambda_1, \lambda_2 \in \KK$, the ideal 
    \[I = \langle x^n - \lambda_1 (1 - b^n), y^m - \lambda_2 (1 - c^m) \rangle\]
    is a Hopf ideal of $H$, and the quotient $H/I$ is free of finite rank over the Hopf subalgebra $\KK[G]$. We denote this quotient by $H(G, \chi, \eta, b, c, \beta, \lambda_1, \lambda_2)$.
\end{proposition}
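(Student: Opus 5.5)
The plan is to show that $u = x^n - \lambda_1(1-b^n)$ and $w = y^m - \lambda_2(1-c^m)$ are skew-primitive: $u$ should be $(1,b^n)$-primitive and $w$ should be $(1,c^m)$-primitive. An ideal generated by skew-primitive elements is automatically a Hopf ideal. Indeed, $\Delta(u) = u\otimes 1 + b^n\otimes u \in I\otimes H + H\otimes I$, then $\epsilon(u) = 0$, and $S(u) = -b^{-n}u \in I$; the same holds for $w$. Multiplicativity of $\Delta$ and anti-multiplicativity of $S$ then extend these properties to all of $I$.

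For the skew-primitivity I use the quantum binomial theorem. Write $\Delta(x) = x\otimes 1 + b\otimes x$ and note that $(b\otimes x)(x\otimes 1) = bx\otimes x$ while $(x\otimes 1)(b\otimes x) = xb\otimes x = \chi(b)\, bx\otimes x$. So the two summands $A = x\otimes 1$ and $B = b\otimes x$ satisfy $AB = pBA$ with $p=\chi(b)$. Hence
\[
\Delta(x)^n=\sum_{k}\binom{n}{k}_{p} B^kA^{n-k},
\]
with the ordering fixed consistently. Since $p$ is a primitive $n$th root of unity, the Gaussian binomials $\binom{n}{k}_p$ vanish for $0<k<n$. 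This leaves $\Delta(x^n) = x^n\otimes 1 + b^n\otimes x^n$. Also, $1-b^n$ is $(1,b^n)$-primitive, so $u$ is $(1,b^n)$-primitive. The same argument applied to $y$, using $yc = \eta(c)cy$, shows that $w$ is $(1,c^m)$-primitive. Note that the derivation plays no role in this step, since only the relation between $y$ and $c$ enters.

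For freeness, I would first show that $I$ is spanned in a controlled way, namely that $H/I$ has $\KK[G]$-basis $\{\bar x^i\bar y^j : 0\le i<n,\ 0\le j<m\}$. The key step is to check that $u$ and $w$ are normal elements, i.e. that they $q$-commute up to scalars with $G$, $x$ and $y$.

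With $G$ this is immediate: $x^ng = \chi(g)^n g x^n$, and $g$ commutes with $b^n$ because $b$ is central, so $ug=\chi(g)^ngu$ requires $\chi^n(g)=1$ or at least compatibility. Here lies a subtlety: I expect that one only needs $I = Hu H + HwH$ together with a rewriting argument, not genuine normality.

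The main obstacle is the relation between $u$ and $y$, and between $w$ and $x$. In the zero-derivation case, $x$ and $z=c^{-1}y$ commute by Proposition~\ref{proposition:change of variables in the zero derivation case}. In the nonzero case, Lemma~\ref{lemma:change of variables in the nonzero derivation case} gives $zx^n = x^nz + ([n]_qc^{-1}-[n]_{q^{-1}}b)x^{n-1}$, and I need this correction to vanish. Since $\eta = \chi^{-1}$ there, we have $q=\chi(c)^{-1}=\chi(b)^{-1}=p^{-1}$, which is a primitive $n$th root of unity, so $[n]_q=[n]_{q^{-1}}=0$ and $z$ commutes with $x^n$. Then I would check the commutation of $y$ with $b^n$, which is $(1-b^n)y = y - \eta(b)^{-n}b^ny$, and see that $u$ is normal up to the character twist. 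The analogous computation handles $w$.

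Once normality is established, the Ore-extension basis of Theorem~\ref{theorem:properties of H}(1) lets me reduce any monomial $gx^iy^j$ modulo $I$ to the claimed range $i<n$, $j<m$. I would show independence either by a diamond-lemma argument on the rewriting system $x^n\to\lambda_1(1-b^n)$, $y^m\to\lambda_2(1-c^m)$, checking the overlap ambiguity of $y^mx^n$ via the normality computations above, or by building the $nm$-rank module over $\KK[G]$ explicitly. The resulting quotient $H/I$ is then free of rank $nm$ over $\KK[G]$.
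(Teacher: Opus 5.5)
Your Hopf-ideal argument is correct and is essentially the paper's. The elements $u=x^n-\lambda_1(1-b^n)$ and $w=y^m-\lambda_2(1-c^m)$ are $(1,b^n)$- and $(1,c^m)$-primitive by the $p$- and $r$-binomial formulas, and the derivation plays no role. The gap is in the freeness part, at exactly the step you flagged and then set aside.

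Normality of $u$ with respect to $G$ needs $\chi(g)^n=1$ for all $g\in G$, but the hypotheses only fix $\chi(b)$. A rewriting argument cannot avoid this. The ambiguity $x^ng$ reduces to $\lambda_1(1-b^n)g$ one way and to $\chi(g)^n\lambda_1 g(1-b^n)$ the other. Hence $\lambda_1(\chi(g)^n-1)g(1-b^n)\in I$, which is in general a nonzero element of $\KK[G]$. Similarly, when $\beta(1-bc)=0$ you get $yu-\eta(b)^nuy=-\lambda_1(1-\eta(b)^n)y$, because the constant term of $1-b^n$ is not twisted. So $u$ is not normal ``up to the character twist'' unless $\eta(b)^n=1$, and otherwise $y\in I$ as soon as $\lambda_1\neq 0$. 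The same happens for $w$ and $x$ unless $\eta(b)^m=1$.

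In fact the freeness claim is false as stated, so no argument can close this step without an extra hypothesis. Take $G=\ZZ^2=\langle a,b\rangle$, $c=b$, $\beta=0$, $\chi(b)=\eta(b)=-1$, $\chi(a)=t$ with $t^2\neq 1$, $\eta(a)=1$, and $\lambda_1=1$. All hypotheses hold with $n=m=2$, but $a^{-1}ua-t^2u=(t^2-1)(1-b^2)$, so $1-b^2\in I$. Then $1-b^2$ annihilates $H/I$, which is nonzero because $\epsilon$ factors through it. So $\KK[G]$ does not embed in $H/I$, and $H/I$ is not free over the domain $\KK[G]$.

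What is needed is a hypothesis such as $\chi^n=1$ and $\eta^m=1$ on all of $G$; this also gives $\eta(b)^n=\chi(c)^{-n}=1$ and $\eta(b)^m=1$. Under that assumption your plan works:
\begin{itemize}
\item $u$ and $w$ are central (in the case $\beta(1-bc)\neq 0$ via $[n]_q=[n]_{q^{-1}}=0$, as you observed);
\item a leading-term argument for the filtration by total degree in $x,y$ shows that the $gx^iy^ju^kw^l$ with $0\le i<n$, $0\le j<m$, $k,l\ge 0$ form a basis of $H$;
\item therefore $H/I=H/(uH+wH)$ is free over $\KK[G]$ on the $\bar x^i\bar y^j$.
\end{itemize}
The paper's own proof has the same hole. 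It only shows that the $\bar x^i\bar y^j$ span $H/I$, and its assertion that $\KK[G]\cap I=0$ follows from the basis of $H$ fails in the example above.
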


\begin{proof}
    We need to show that $I$ is a Hopf ideal, that is, $\Delta(I) \subseteq I \otimes H + H \otimes I$, $\epsilon(I) = 0$, and $S(I) \subseteq I$. Recall that if $u$ and $v$ are elements in a $\KK$-algebra and $q \in \KK$ is a scalar such that $vu = quv$, then for all $n > 0$,
    \[(u + v)^n = \sum_{k=0}^{n}\binom{n}{k}_q u^k v^{n-k},\]
    where $\binom{n}{k}_q = \frac{(n!)_q}{(k!)_q (n - k)!_q}$ are the Gauss polynomials and $(k!)_q = [1]_q [2]_q \cdots [k]_q$ (see \cite[Proposition IV.2.2]{Kassel}). Note that $\binom{n}{0}_q = \binom{n}{n}_q = 1$. Since $p$ is a primitive $n$th root of unity, $(n!)_p = 0$ while $(k!)_p \neq 0$ for $0 < k < n$, implying $\binom{n}{k}_p = 0$ for all $0 < k < n$. Thus, if $vu = puv$, then $(u + v)^n = u^n + v^n$. 
    
    This applies in particular to $u = x \otimes 1$ and $v = b \otimes x$. Since $\chi(b)=p$, we have $p^{-n} = 1$, and calculating the commutation relation gives
    \[
        vu = (b \otimes x)(x \otimes 1) = bx \otimes x = p^{-1} xb \otimes x = p^{-1} (x \otimes 1)(b \otimes x) = p^{-1}uv.
    \]
    Since $p$ is a primitive $n$th root of unity, so is $p^{-1}$. Thus we have
    \begin{align*}
        \Delta(x^n - \lambda_1 (1 - b^n)) & = (x \otimes 1 + b \otimes x)^n - \lambda_1 (1 \otimes 1 - b^n \otimes b^n)\\
        & = x^n \otimes 1 + b^n \otimes x^n - \lambda_1 (1 - b^n) \otimes 1 - \lambda_1 b^n \otimes (1 - b^n)\\
        & = (x^n - \lambda_1 (1 - b^n)) \otimes 1 + b^n \otimes (x^n - \lambda_1 (1 - b^n)).
    \end{align*}
    Thus, $\Delta(x^n - \lambda_1 (1 - b^n)) \in I \otimes H + H \otimes I$. Similarly, $\Delta(y^m - \lambda_2 (1 - c^m)) \in I \otimes H + H \otimes I$ holds, from which $\Delta(I) \subseteq I\otimes H + H \otimes I$ follows. Clearly, the counit satisfies $\epsilon(x^n - \lambda_1 (1 - b^n)) = \epsilon(y^m - \lambda_2 (1 - c^m)) = 0$. 
    
    Recall that if $vu = quv$, then $(vu)^k = q^{k(k+1)/2}u^k v^k$ for any $k \geq 1$. Since $xb = pbx$, we have $S(x) = -b^{-1}x = -p x b^{-1}$, and
    \[ S(x)^n = (-b^{-1}x)^n = (-1)^n p^{n(n+1)/2}x^n b^{-n}. \]
    If $n$ is odd, then $p^{n(n+1)/2} = (p^n)^{(n+1)/2} = 1$. If $n$ is even, then $p^{n(n+1)/2} = (p^{n/2})^{n+1} = (-1)^{n+1} = -1$ (since $p$ is a primitive $n$th root of unity). In either case, $(-1)^n p^{n(n+1)/2} = -1$. Therefore, noting that $S(b^n) = b^{-n}$, we get
    \[
        S(x^n - \lambda_1 (1 - b^n)) = -x^n b^{-n} - \lambda_1 (1 - b^{-n}) = -b^{-n}(x^n - \lambda_1 (1 - b^n)) \in I.
    \]
    The proof that $S(y^m - \lambda_2 (1 - c^m)) \in I$ is analogous. Hence $I$ is a Hopf ideal of $H$. Since $\{gx^i y^j \mid g \in G,\ i,j \geq 0\}$ is a $\KK$-basis of $H$, we have $\KK[G] \cap I = 0$, so $\KK[G]$ injects into $H/I$. Denoting the images of $x$ and $y$ in the quotient by the same symbols, we see $x^n, y^m \in \KK[G]$. Therefore, $H/I$ is generated by $\{x^i y^j \mid 0 \leq i < n, 0 \leq j < m\}$ as a module over $\KK[G]$.
\end{proof}

The Hopf algebras $H(G, \chi, \eta, b, c, \beta, \lambda_1, \lambda_2)$ are finite modules over the group algebra $\KK[G]$ and hence finite dimensional in case $G$ is a finite group. However, infinite dimensional Hopf algebras of that type are also of interest as the Hopf algebras $H_\beta$ of Example~\ref{example_sl2}, studied in \cite{Wang-Wu-Tan} show. Recall that 
\[H_\beta = H(\ZZ^3, \chi, \chi^{-1}, e, f, -\beta_3)/\langle x^n + \beta_2 (1 - e^n), y^n + \beta_3 (1 - f^n)\rangle.\]
Hence, $H_\beta = H(\ZZ^3, \chi, \chi^{-1}, e, f, -\beta_3, -\beta_1, -\beta_2)$ after some change of variables. Some of the structural results of $H_\beta$ obtained in \cite{Wang-Wu-Tan} follow, therefore, from our results. 

Recall that an algebra is \emph{Gorenstein} if it has finite left and right injective dimension over itself (see for example \cite{Wu-Zhang}). Moreover, a Noetherian algebra $A$ is left \emph{Artin-Schelter-Gorenstein} if $_{A}A$ has finite injective dimension, say $d$, and for every simple $A$-module $V$, $\mathrm{Ext}_{A}^{i}(V, A) = 0$ for all $i \neq d$ and $\mathrm{Ext}_{A}^{d}(V, A)$ is a simple $A^{\circ}$-module. By \cite[Theorem 3.5]{Wu-Zhang}, a Noetherian PI Hopf algebra $A$ over $\KK$ such that every simple $A$-module is finite dimensional over $\KK$ is Artin-Schelter-Gorenstein.

\begin{proposition}
    Let $d \geq 1$ and $H = H(\ZZ^d, \chi, \eta, b, c, \beta, \lambda_1, \lambda_2)$ for some characters $\chi, \eta$ of $\ZZ^d$ such that $\chi(c) \neq 1 \neq \eta(b)$ and $\chi(c)^n = 1 = \eta(b)^m$ for some $m, n$. 
    \begin{enumerate}
        \item Any simple left or right $H$-module is finite dimensional.
        \item $H$ is a pointed affine Noetherian PI-Hopf algebra of Gelfand-Kirillov dimension $d$, Artin-Schelter-Gorenstein of injective dimension $d$ as well as finitely generated as a module over $\KK[G]$. 
    \end{enumerate}
\end{proposition}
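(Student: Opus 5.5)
The strategy is to place $H$ inside the framework of \cite[Theorem 3.5]{Wu-Zhang}. Write $G=\ZZ^d$ and let $\bar H = H(G,\chi,\eta,b,c,\beta)$, so that $H=\bar H/I$ with $I$ the Hopf ideal of Proposition~\ref{proposition:hopf algebra quotients}. (That proposition is formulated in terms of $p=\chi(b)$ and $r=\eta(c)$; we will use it in the form where $x^n$ and $y^m$ satisfy the relevant $q$-binomial vanishing, and we are implicitly taking the indices compatible with the hypothesis.)

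\textbf{Step 1: finiteness over $\KK[G]$.} By Proposition~\ref{proposition:hopf algebra quotients}, $H$ is generated as a $\KK[G]$-module by the finitely many elements $x^iy^j$ with $0\le i<n$ and $0\le j<m$. Because $\KK[\ZZ^d]$ is a Laurent polynomial ring, it is affine, commutative and Noetherian, so $H$ is affine and Noetherian as a finitely generated module over it. The algebra is pointed with $G(H)\supseteq G$. The coradical filtration is spanned by $G$ together with products of skew-primitives, so the same argument as in Theorem~\ref{theorem:properties of H}(7) applies.

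\textbf{Step 2: a central affine subalgebra over which $H$ is finite.} Since $\chi$ and $\eta$ take values among roots of unity on the finitely many generators, there is an $N$ with $\chi(g)^N=\eta(g)^N=1$ for all $g\in G$. For such $g$, the element $g^N$ commutes with $x$ and $y$, and it commutes with $G$ because $G$ is abelian. This gives a central subalgebra
\[
C=\KK[g_1^{\pm N},\dots,g_d^{\pm N}],
\]
and $\KK[G]$ is a finite $C$-module. Consequently $H$ is a finitely generated module over the affine central subalgebra $C$. This makes $H$ PI, and by \cite[13.10.3]{McConnell-Robson} every simple left or right $H$-module is finite dimensional, which proves (1).

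\textbf{Step 3: Gelfand–Kirillov dimension.} Since $H$ is finite over $C$, we have $\mathrm{GKdim} H=\mathrm{GKdim}$ of the image of $C$ in $H$. Proposition~\ref{proposition:hopf algebra quotients} says $\KK[G]$ injects into $H$, so $C$ embeds as well and has GK dimension $d$. Hence $\mathrm{GKdim} H = d$.

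\textbf{Step 4: the Artin–Schelter–Gorenstein property.} Since $H$ is a Noetherian affine PI Hopf algebra whose simple modules are finite dimensional, \cite[Theorem 3.5]{Wu-Zhang} shows that $H$ is Artin–Schelter–Gorenstein. The remaining task is to identify the injective dimension. For this I would invoke the further result in Wu–Zhang that for such algebras the injective dimension equals the GK dimension (they are GK-Cohen–Macaulay), which gives injective dimension $d$.

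I expect Step 4 to be the main obstacle. If the equality of injective dimension with GK dimension is not directly available, I would instead argue that $H$ is a free (hence faithfully flat) finite extension of the AS-regular algebra $\KK[G]$, which has global dimension $d$. Computing $\mathrm{Ext}_H(\KK,H)\cong\mathrm{Ext}_{\KK[G]}(\KK,\KK[G])$ via a Frobenius-type extension argument, using $x^n,y^m\in\KK[G]$, then shows that this Ext is concentrated in degree $d$.
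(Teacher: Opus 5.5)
Your Step 2 claims that $\chi$ and $\eta$ take root-of-unity values on all generators of $G$. For $d\ge 2$ this does not follow from the hypotheses, which only control $\chi(b)$, $\chi(c)$, $\eta(b)$ and $\eta(c)$. Take $G=\ZZ^2=\langle g_1,g_2\rangle$, $b=c=g_1$, $\beta=0$, $\chi(g_1)=\eta(g_1)=-1$, $\chi(g_2)=2$, $\eta(g_2)=1$. Every hypothesis holds with $n=m=2$, but $xg_2^N=2^Ng_2^Nx$ for every $N\ge 1$. So your $C$ is not central, and Steps 2--3 do not go through as written.

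The paper never uses a central subalgebra. By your own Step 1, $H$ is a finitely generated module over the commutative affine subalgebra $\KK[G]$, and this alone makes $H$ PI: right multiplications embed $H^{\mathrm{op}}$ into $\mathrm{End}_{\KK[G]}(H)$, which is a homomorphic image of a subring of $M_{nm}(\KK[G])$. Then \cite[13.10.3]{McConnell-Robson} gives (1) with no finite-order assumption. Likewise, finite generation over a subalgebra, central or not, already gives that $\mathrm{GKdim}\,H$ equals the GK dimension of the image of $\KK[G]$.

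The same example shows that the hypothesis you slipped in is not just a convenience for (2). Suppose $\lambda_1\neq 0$. Conjugating $x^2-\lambda_1(1-b^2)\in I$ by $g_2$ gives $\tfrac{1}{4}x^2-\lambda_1(1-b^2)\in I$, hence $x^2\in I$ and $1-b^2\in I$. Consequently:
\begin{itemize}
\item $\KK[G]$ does not embed in $H$; the basis argument for $\KK[G]\cap I=0$ in Proposition~\ref{proposition:hopf algebra quotients} does not cover this case.
\item $H$ is finite over a quotient of $\KK[G]/(1-b^2)$, and it maps onto $\KK[G]/(1-b^2)$ via $x,y\mapsto 0$. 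Hence $\mathrm{GKdim}\,H=1<2=d$.
\item Taking $\chi(g_2)=i$ instead produces the same collapse, so finite order of $\chi$ does not help either.
\end{itemize}
So the clauses $\mathrm{GKdim}\,H=d$ and $\mathrm{injdim}\,H=d$ are not available under the stated hypotheses. Your Step 3 needs an extra assumption, for instance $\chi^n=1=\eta^m$ on all of $G$. Under that assumption:
\begin{itemize}
\item $x^n-\lambda_1(1-b^n)$ and $y^m-\lambda_2(1-c^m)$ are central;
\item $H$ is free of rank $nm$ over $\KK[G]$;
\item your $C$ is central, and Steps 2--4 work.
\end{itemize}
This includes your GK-Cohen--Macaulay argument for $\mathrm{injdim}\,H=d$, a point the paper's proof does not address. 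You should state this assumption explicitly rather than present it as a consequence of the given hypotheses.
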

\begin{proof}
    (1) Recall that $H(\ZZ^d, \chi, \eta, b, c, \beta, \lambda_1, \lambda_2) = H(\ZZ^d, \chi,\eta, b, c, \beta)/I$ where $I = \langle x^n - \lambda_1 (1 - b^n), y^m - \lambda_2 (1 - c^m)\rangle$. Then $H$ is finitely generated by $\{x^i y^j \mid 0 \leq i < n,\ 0 \leq j < m\}$ as a module over $\KK[G]$. In particular, since $\KK[G]$ is an affine commutative domain, $H$ is an affine PI-algebra and hence any simple $H$-module is finite dimensional over $\KK$ by \cite[13.10.3]{McConnell-Robson}.

    (2) By \cite[Theorem 3.5]{Wu-Zhang}, a Noetherian PI-Hopf algebra $A$ over $\KK$ such that every simple $A$-module is finite dimensional over $\KK$ is Artin-Schelter-Gorenstein. 
\end{proof}
\section{Finite Dimensional Simple $H$-modules}\label{section:finite dimensional simple $H$-modules}

In this section, we classify the finite dimensional simple modules over the algebra $H = H(G, \chi, \eta, b, c, \beta)$. Denote by $\Irr(H)$ the set of isomorphism classes of finite dimensional simple left $H$-modules. 
For this section we make the following {\bf general assumptions}:
\begin{itemize}
\item $\KK$ is an algebraically closed field of characteristic zero.
\item $G$ is a finitely generated abelian group.
\item The characters $\chi$ and $\eta$ have finite order.  
\end{itemize}

We have seen that in Proposition \ref{proposition:change of variables in the zero derivation case} that in case $\beta=0$ or $b=c^{-1}$, we can replace $y$ with $c^{-1}y$ and obtain that $x$ and $y$ commute 
or that we can replace otherwise $y$ by $\beta^{-1}c^{-1}y$, to obtain $[y,x]=c^{-1}-b$ (see Lemma \ref{lemma:change of variables in the nonzero derivation case}). Hence we assume that $H$ is generated as an algebra by $x$, $y$, and the elements of $G$, subject to the relations:
\[
    xg = \chi(g)gx, \qquad yg = \eta(g)gy, \qquad yx = xy + e,
\]
for all $g \in G$, where  $e=c^{-1}-b$, in case $\beta(c^{-1}-b)\neq 0$ and $e=0$, otherwise.
Since the descriptions of the simple modules differ significantly depending on $e$, we divide the classification into two parts:
\begin{enumerate}
    \item $\beta e =0$ : the Skew Group Ring Case;
    \item $\beta e \neq 0$: the Differential Operator Ring Case.
\end{enumerate}

Before proceeding, we recall some necessary terminology. Consider the sets $S_x = \{x^i \mid i \ge 1\}$ and $S_y = \{y^i \mid i \ge 1\}$. It is easy to see that $S_x$ is a left denominator set in $\KK[G][x; \tau_\chi]$. That $S_x$ is also a left denominator set in $H$ follows from \cite[Lemma 1.4]{Goodearl}. By the symmetry between the roles of $x$ and $y$ in the construction of $H$, $S_y$ too is a left denominator set in $H$. For a left denominator set $S \subset H$ and a left $H$-module $M$, the $S$-\emph{torsion submodule} of $M$ with respect to $S$ is defined as
\[ \mathrm{ass}_S M = \{ m \in M \mid sm=0 \text{ for some } s \in S\}. \]
The module $M$ is called $S$-\emph{torsion} if $\mathrm{ass}_S M = M$, and $S$-\emph{torsion-free} if $\mathrm{ass}_S M = 0$. Specifically:
\begin{itemize}
    \item[(i)] We call $M$ \emph{$x$-torsion} (resp. \emph{$x$-torsion-free}) if it is torsion (resp. torsion-free) with respect to $S_x$.
    \item[(ii)] We call $M$ \emph{$y$-torsion} (resp. \emph{$y$-torsion-free}) if it is torsion (resp. torsion-free) with respect to $S_y$.
    \item[(iii)] We say $M$ is \emph{torsion} if it is both $x$-torsion and $y$-torsion.
\end{itemize}

\subsection{The Skew Group Ring Case}
In this subsection, we assume that either $\beta = 0$ or $e = 0$, which implies that $yx = xy$. We then consider the algebra $H = \KK[x, y] \# G$. Our goal is to classify the finite-dimensional simple $H$-modules based on their torsion properties as defined earlier.

We will work in a more general setting and consider a skew group ring of an affine commutative $\KK$-algebra $R$ generated by a set $X = \{x_1, \ldots, x_n \}$ and a finitely generated abelian group $G$. Recall that a group $G$ acts on a ring $R$ by automorphisms if there exists a group homomorphism $\rho:G \to \text{Aut}(R)$. We say that $G$ \emph{acts faithfully} if $\ker(\rho) = \{e\}$ is trivial, where $e$ is the neutral element of $G$. Any group action $\rho:G \to \text{Aut}(R)$ induces a faithful group action $\rho:G/N \to \text{Aut}(R)$ with $N = \ker(\rho)$. Let $G$ act on $R$ via a set of characters $\chi = \{\chi_1, \ldots, \chi_n\}$, that is, $\chi_i \in \widehat{G}$ and
\[g\cdot x_i = \chi_{i}^{-1}(g)x_i\]
for all $g \in G$ and $1 \leq i \leq n$. Let $R\# G$ be the corresponding skew group ring. That is, $R\# G$ is the $\KK$-algebra generated by $G$ and $X$ subject to the relations
\[gx_i = \chi_{i}(g)^{-1}x_i g\]
or, equivalently $x_i g = \chi_{i}(g)gx_i$, for all $g \in G$ and $1 \leq i \leq n$. 

For a character $\lambda \in \widehat{G}$, let $\KK_\lambda$ be the one dimensional $\KK[G]$-module where the action of $G$ is given by $g\cdot v = \lambda(g)v$ for all $g \in G$ and $v \in \KK_\lambda$.
\begin{lemma}\label{lemma:torsion modules in zero derivation case}
    Every simple torsion $R\# G$-module $V$ is a simple $\KK[G]$-module. In particular, $\widehat{G}\subseteq \Irr(H)$ and since $G$ is finitely generated abelian, $V$ is one dimensional and isomorphic to $\KK_\lambda$ for some $\lambda \in \widehat{G}$.
\end{lemma}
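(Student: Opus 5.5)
The plan is to show that the generators $x_1,\ldots,x_n$ all act by zero on $V$. Then $V$ is a module over $R\#G/\langle X\rangle \simeq \KK[G]$, and its simplicity as an $R\# G$-module is the same as simplicity as a $\KK[G]$-module. Here "torsion" means that for each $i$, every element of $V$ is killed by some power of $x_i$, which generalizes the $x$- and $y$-torsion notions defined above.

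\textbf{Step 1: the common kernel is a submodule.} Let $W = \{v\in V \mid x_iv = 0 \text{ for all } i\}$. Each $x_j$ kills $W$, so $x_jW = 0 \subseteq W$. For $g\in G$ and $v\in W$ we have $x_i gv = \chi_i(g) g x_i v = 0$, so $gW\subseteq W$. Hence $W$ is an $R\#G$-submodule of $V$.

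\textbf{Step 2: $W\neq 0$.} This is the main point. Pick $0\neq v\in V$. Since $x_1$ acts locally nilpotently, take $k$ maximal with $x_1^k v\neq 0$ and set $v_1 = x_1^k v$, so $x_1v_1=0$. The subspace $V_1=\ker x_1$ is nonzero. It is stable under $x_2$, because the $x_i$ commute in $R$. Since $x_2$ acts locally nilpotently on $V_1$, the same argument gives a nonzero vector killed by both $x_1$ and $x_2$. Inducting over $i=1,\ldots,n$ yields a nonzero vector killed by all the $x_i$. Commutativity of $R$ is essential here; finite dimensionality is not even needed. By simplicity, $W=V$, so $X$ annihilates $V$ and $V$ is a simple $\KK[G]$-module.

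\textbf{Step 3: the "in particular" clauses.} Conversely, for each $\lambda\in\widehat G$, the module $\KK_\lambda$ with all $x_i$ acting by zero is a well-defined $R\#G$-module, since the relations $x_ig=\chi_i(g)gx_i$ hold trivially. It is torsion and one dimensional, hence simple. This gives $\widehat G\subseteq \Irr(H)$ when applied to $H=\KK[x,y]\#G$.

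For one-dimensionality, a simple module over the commutative algebra $\KK[G]$ has $\operatorname{End}$ equal to a division algebra. Since $G$ is finitely generated, $\KK[G]$ is affine commutative, so by the Nullstellensatz its simple modules are one dimensional over the algebraically closed field $\KK$. Explicitly, each $g$ acts by a scalar, since by Schur's lemma (for countable dimension) $\operatorname{End}(V)=\KK$. This defines $\lambda\in\widehat G$ with $V\simeq\KK_\lambda$.
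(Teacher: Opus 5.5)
Your proof is correct and follows essentially the paper's route: show that every $x_i$ annihilates $V$ by finding a nonzero $R\#G$-submodule cut out by the $x_i$, then treat $V$ as a simple module over the affine commutative algebra $\KK[G]$. The only difference is that the paper uses normality of $x_i$ to get $x_iV\in\{0,V\}$ and lets torsion exclude $x_iV=V$, which tacitly needs finite dimensionality or a kernel argument, whereas your common-kernel formulation handles this directly with no dimension hypothesis.
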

\begin{proof}
    Let $V$ be a simple $R\# G$-module. Since each $x_i$ is normal in $R\#G$, for each $i$ we have either $x_iV=0$ or $x_iV=V$. If $V$ is torsion with respect to $S_{x_i}$ for every $i$, then necessarily $x_iV=0$ for all $i$, hence $\langle X\rangle V=0$. Therefore $V$ is an $R\#G/\langle X\rangle \simeq \KK[G]$-module. If $G$ is finitely generated abelian, $\KK[G]$ is affine commutative and hence $V$ is one dimensional. 
\end{proof}
In many examples in the literature, the group $G$ acts on an algebra by multiplication by roots of unity. The next result shows that this is not unusual.
\begin{lemma}
    If there exists a finite dimensional $x_i$-torsion-free simple left $R\# G$-module, then $\chi_i$ has finite order.
\end{lemma}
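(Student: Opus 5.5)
We argue through the normality of $x_i$. Let $V$ be a finite dimensional simple left $R\#G$-module that is $x_i$-torsion-free. As in the proof of Lemma~\ref{lemma:torsion modules in zero derivation case}, $x_iV$ is a submodule because $x_i$ is normal, so either $x_iV=0$ or $x_iV=V$. The first option contradicts torsion-freeness, since $V\neq 0$. Hence $x_i$ acts on $V$ as a surjective, and therefore bijective, linear endomorphism $X_i\in \mathrm{End}_\KK(V)$, because $V$ is finite dimensional. Let $\rho(g)$ denote the invertible operator by which $g\in G$ acts on $V$.

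The relation $x_i g = \chi_i(g) g x_i$ gives $X_i\rho(g)X_i^{-1} = \chi_i(g)\rho(g)$ for all $g\in G$. Taking determinants on $V$, with $n=\dim V\geq 1$, yields
\[
\det(\rho(g)) = \chi_i(g)^n \det(\rho(g)).
\]
Since $\rho(g)$ is invertible, $\det\rho(g)\neq 0$, so $\chi_i(g)^n=1$ for every $g\in G$. Thus $\chi_i^{\dim V}$ is the trivial character, and $\chi_i$ has finite order dividing $\dim V$.

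Instead of determinants, one could compare spectra: conjugation by $X_i$ shows that the set of eigenvalues of $\rho(g)$ is stable under multiplication by $\chi_i(g)$. This set is finite and consists of nonzero scalars, so $\chi_i(g)$ must be a root of unity. That only gives pointwise finite order, however, and we would then need finite generation of $G$ to obtain a common bound. The determinant argument is cleaner because it bounds the order by $\dim V$ uniformly in $g$.

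The only step needing care is the first one: showing that $x_i$ acts invertibly, which rests on the normality of $x_i$ together with the dichotomy $x_iV\in\{0,V\}$. The remaining steps are routine linear algebra.
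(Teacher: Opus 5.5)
Your proof is correct, but it takes a different route from the paper.

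\textbf{The paper's route.} The paper works with eigenvectors of $x_i$. Torsion-freeness gives an eigenvector $v$ of $l_{x_i}$ with eigenvalue $\lambda\neq 0$. The relation $x_i g^t=\chi_i(g)^t g^t x_i$ then makes $g^t v$ an eigenvector with eigenvalue $\lambda\chi_i(g)^t$. Since $V$ cannot contain more than $\dim V$ eigenvectors with pairwise distinct eigenvalues, two of $\lambda,\lambda\chi_i(g),\ldots,\lambda\chi_i(g)^{\dim V}$ must coincide. So every value $\chi_i(g)$ is a root of unity of order at most $\dim V$.

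\textbf{What your determinant identity buys.} It replaces this with a single computation and gives the sharper, uniform conclusion $\chi_i^{\dim V}=1$ directly. It does not need an eigenvector, so algebraic closure is not used. It also avoids the step the paper leaves implicit: passing from the orders of the values $\chi_i(g)$ to the order of the character $\chi_i$. That step goes, for instance, via $\chi_i(G)$ being a subgroup of $\KK^\times$ of bounded exponent, hence finite and cyclic. The paper's eigenvector argument, in turn, is closer in spirit to the eigenvalue-shifting arguments used throughout the rest of Section~\ref{section:finite dimensional simple $H$-modules}.

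\textbf{Two minor corrections.}
\begin{enumerate}
\item The normality step is unnecessary. $x_i$-torsion-freeness already says that $x_i$ has zero kernel on $V$, so $x_i$ is bijective on the finite dimensional space $V$. Simplicity of $V$ is not needed at all, in either argument.
\item Your side comment on the spectral argument is too pessimistic. The $\chi_i(g)$-orbit of a nonzero eigenvalue of $\rho(g)$ has exactly as many elements as the order of $\chi_i(g)$, and it lies in a set of at most $\dim V$ eigenvalues. So that argument also bounds the orders of all $\chi_i(g)$ uniformly by $\dim V$, and it does not require $G$ to be finitely generated. This is precisely the paper's argument, applied to eigenvalues of $\rho(g)$ instead of $x_i$.
\end{enumerate}
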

\begin{proof}
    Let $V$ be a finite dimensional simple left $R\# G$-module that is $x_i$-torsion-free and consider the left multiplication by $x_i$ on $V$ as a linear map $l_{x_i} :V \to V$. Since $V$ is $x_i$-torsion-free, $l_{x_i}$ has an eigenvector $v$ with a nonzero eigenvalue $\lambda$.  For any $g\in G$ and $t\geq 0$, the element $g^tv$ is an eigenvector of $l_{x_i}$ with eigenvalue $\lambda \chi_{i}^{t}(g)$, since  $x_i \cdot g^t v = \lambda \chi_{i}^{t}(g)g^t v$. As eigenvectors of different eigenvalues are linearly independent and $V$ has finite dimnension, say $n$, there exists $0\leq t \leq n$ such that $\chi_{i}^{t}(g)=1$. Hence $\chi_{i}$ has finite order $\leq n$.
\end{proof}
\begin{remark}
    In light of Lemma~\ref{lemma:torsion modules in zero derivation case}, when classifying simple modules in the zero derivation case, it suffices to consider modules that are $X$-torsion-free, since we can always reduce to this case. Here, by $X$-torsion-free, we mean a module that is $x_i$-torsion-free for every $x_i \in X$. Hence, we will also assume that the characters $\chi_1, \ldots, \chi_n$ have finite order.
\end{remark}
Let $N = \cap_{i=1}^{n}\ker(\chi_i)$. Since the $\chi_i$ have finite order and $G$ is finitely generated abelian, the quotient group $G/N$ is finite. Following \cite[Lemma 1.3]{Passman}, we view $R\# G$ as a crossed product
\[R\# G \simeq R[N]*(G/N)\]
where the action of $G/N$ on $R[N]$ is induced from the action of $G/N$ on $R$. Fix a set $\Lambda \subseteq G$ of coset representatives of $G/N$. Then $R\# G$ is a free left $R[N]$-module with basis $\{\overline{g} \mid g \in \Lambda\}$ and such that 
\[\overline{g}\overline{h} = \gamma(g, h)\overline{gh} \qquad g,h\in\Lambda\]
for a 2-cocycle $\gamma: G/N \times G/N \to U(R[N])$, the group of units of $R[N]$.
\begin{proposition}\label{proposition:torsionfree simples over general skew group ring}
    Let $G, R, X, \chi_i$, and $N$ be as above. Let $V = \KK v$ be an $X$-torsion-free simple $R[N]$-module. Then:
    \begin{enumerate}
        \item The induced module $\overline{V} = (R\# G) \otimes_{R[N]}V$ is a simple $R\# G$-module. 
        \item A $\KK$-basis for $\overline{V}$ is $\{v_g := \overline{g}\otimes v \mid  g \in \Lambda\}$. Thus $\dim(\overline{V}) = |G/N|$. 
        \item The action of $R\# G$ on $\overline{V}$ is given by:
        \[x_i \cdot v_g = \chi_{i}(g)k_i v_g, \quad \forall 1 \leq i \leq n\]
        \[hg\cdot v_{g'} = \lambda(\gamma(g, g')h)v_{gg'}, \quad \forall g, g' \in \Lambda, h \in N\]
        where $x_i \cdot v = k_i v$ ($k_i \in \KK^\times$) and $h\cdot v = \lambda(h)v$, for some $\lambda \in \widehat{N}$. 
        \item Any $X$-torsion-free simple left $R\# G$-module is isomorphic to $\overline{V}$ for some $X$-torsion-free simple $R[N]$-module $V$.
    \end{enumerate}
    We denote this module by $V(\mathbf{k}, \lambda)$, where $\mathbf{k} = (k_1, \ldots, k_n)$.
\end{proposition}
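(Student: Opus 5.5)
The plan is to use the crossed product decomposition $R\# G \simeq R[N]*(G/N)$ and exploit that $R[N]$ is commutative. Indeed, $N$ acts trivially on $R$ because $N=\cap_i\ker\chi_i$, so $R[N]\simeq R\otimes \KK[N]$ is an affine commutative algebra. Hence $V$ is one dimensional: $x_i v = k_i v$ and $h v=\lambda(h)v$ for a character $\lambda\in\widehat N$. Torsion-freeness gives $k_i\neq 0$.

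First I establish part (2). Since $R\#G$ is a free right $R[N]$-module on $\{\overline g\mid g\in\Lambda\}$ (the crossed product is free on both sides, because the $\overline g$ are units normalising $R[N]$), the tensor product $\overline V$ has basis $v_g=\overline g\otimes v$. Therefore $\dim\overline V=|G/N|$.

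Part (3) is then a direct computation. We have $x_i\overline g = \chi_i(g)\overline g x_i$, so
\[x_i v_g=\chi_i(g)\overline g\otimes x_iv=\chi_i(g)k_iv_g.\]
Similarly, $hg\cdot v_{g'}=h\gamma(g,g')\overline{gg'}\otimes v$. Moving the $N$-part through $\overline{gg'}$ uses that $G$ is abelian, so conjugation fixes $N$ pointwise on $\KK[N]$ and the cocycle values in $U(R[N])$ act on $v$ through $\lambda$ (extended by the $k_i$). This gives the stated formula, modulo bookkeeping about which side $\gamma$ sits on.

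For simplicity (part (1)), I would use eigenvalues of the commuting operators $x_1,\dots,x_n$ together with $N$. The basis vector $v_g$ has joint eigenvalue $(\chi_1(g)k_1,\dots,\chi_n(g)k_n)$ for the $x_i$. If $g,g'\in\Lambda$ give the same tuple, then, since $k_i\neq0$, we get $\chi_i(g^{-1}g')=1$ for all $i$, so $g^{-1}g'\in N$ and $g=g'$. Thus the $v_g$ span distinct one-dimensional joint eigenspaces for the commutative subalgebra $\KK[x_1,\dots,x_n]$. Any nonzero submodule is stable under these operators, so it contains some $v_g$. Applying the units $\overline{g'}$, which permute the lines $\KK v_{g}$ up to the nonzero scalars $\lambda(\gamma(\cdot,\cdot))$, shows that it contains all the $v_g$. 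Hence $\overline V$ is simple. This separation-of-eigenvalues step is the crux, and it is exactly where $X$-torsion-freeness is used.

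For part (4), let $W$ be an $X$-torsion-free simple $R\#G$-module. By the general assumptions $W$ is finite dimensional (affine PI, as in the corollary above), so $W$ restricted to $R[N]$ has a simple submodule $V=\KK v$. Since each $x_i$ acts injectively on $W$, we have $k_i\neq0$, so $V$ is $X$-torsion-free. Frobenius reciprocity yields a nonzero map $\overline V\to W$, $\overline g\otimes v\mapsto \overline g v$. By part (1) this map is injective, and by simplicity of $W$ it is surjective, so $W\simeq\overline V$. The main obstacle I expect is the careful handling of the cocycle $\gamma$ in part (3) so that the displayed formula is literally correct. Everything else is routine.
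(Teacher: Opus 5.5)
Your proposal is correct and follows essentially the same route as the paper. Both use the basis $\{\overline{g}\otimes v\}$ of the induced module, the nonzero map $\overline{g}\otimes v\mapsto\overline{g}v$ in part (4), and, for simplicity, the fact that distinct $g\in\Lambda$ give distinct tuples $(\chi_i(g)k_i)_i$ because $k_i\neq 0$. The paper phrases that last step as a minimal-support reduction using $\chi_i(h)-k_i^{-1}x_i$, while you phrase it as a joint-eigenspace decomposition. The bookkeeping you flag in (3) is harmless: $\gamma(g,g')$ is the element of $N$ carrying $\overline{gg'}$ to $gg'$, and $N$ is central in $R\# G$ because it acts trivially on $R$ and $G$ is abelian.
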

%%%%%
\begin{proof}
    (1 and 2) As we noted before the proposition, $R[N]$ is affine commutative and since $\KK$ is algebraically closed of characteristic zero, $V = \KK v$ is one-dimensional for some nonzero $v \in V$. Consider the induced $R\# G$-module 
    \[\overline{V} = (R\# G) \otimes_{R[N]}V = \oplus_{g \in \Lambda}\KK\overline{g} \otimes v\]
    Let $W$ be a nonzero submodule of $\overline{V}$ and let $w = \sum r_g \overline{g}\otimes v$ be a nonzero element of $W$ of minimal support $\Lambda' \subseteq \Lambda$, where by \emph{support} we mean the set of those $g$ for which $r_g \neq 0$. We claim that $\Lambda'$ is a singleton. Since $V$ is one-dimensional, for any $1 \leq i \leq n$, there exists $k_i \in \KK$ such that $x_i \cdot v = k_i v$. Since $V$ is $x_i$-torsion-free, $k_i \neq 0$ and 
    \[k_{i}^{-1}x_i \cdot w = \sum_{g \in \Lambda'}r_g \chi_{i}(g)\overline{g}\otimes k_{i}^{-1}x_i v = \sum_{g\in \Lambda'}r_g \chi_{i}(g)\overline{g} \otimes v\]
    Hence, for any $h \in \Lambda'$, we get
    \[(\chi_{i}(h) - k_{i}^{-1}x_{i})w = \sum_{g\neq h}r_g (\chi_{i}(h) - \chi_{i}(g))\overline{g} \otimes v.\]
    By the minimality of the support of $w$, we must have $\chi_{i}(g) = \chi_{i}(h)$ for all $g, h \in \Lambda'$ and $1 \leq i \leq n$. Thus $gh^{-1}\in N$ for any $g, h \in \Lambda'$ which means that any two elements of $\Lambda'$ represent the same coset, that is $\Lambda'$ is a singleton and $w = r_g \overline{g} \otimes v$ for some $g \in \Lambda$ and $r_g \neq 0$. Multiplying by $r_{g}^{-1}\overline{g^{-1}}$, we can assume that $W$ contains the element $\overline{e} \otimes v$, from which we can generate all elements of the form $\overline{g} \otimes v$ with $g \in \Lambda$.

    (3) By hypothesis, $V$ is a one-dimensional $R[N]$-module. Hence, elements of $N$ act via a character $\lambda \in \widehat{N}$ and the generators $x_i$ act by scalars $k_i$ on $V$. As above, $x_i v_g = \chi_{i}(g)k_i v_g$ for any $g \in \Lambda$ and $1 \leq i \leq n$. Furthermore, any element of $G$ can be written as $hg$ with $h \in N$ and $g \in \Lambda$. Thus, for any $g' \in \Lambda$, we have
    \[\overline{hg}\cdot v_{g'} = \overline{h}\overline{g}\overline{g'}\otimes v = \overline{gg'} \otimes \gamma(g, g')hv = \lambda(\gamma(g,g')h)v_{gg'}.\]

    (4) Given a finite dimensional $X$-torsion-free simple left $R\# G$-module $M$, we can choose a simple left $R[N]$-module $V = \KK v \subseteq M$. Then $f: \overline{V} \to M$ given by $\overline{g}\otimes v \mapsto \overline{g}v$ for $g \in \Lambda$ is nonzero and left $R\# G$-linear. Since $M$ and $\overline{V}$ are simple, it follows that $f$ is an isomorphism. 
\end{proof}
We apply this result to $H = H(G, \chi, \eta, b, c, 0)$.
\begin{corollary}\label{corollary:x-torsionfree y-torsion simples in zero derivation}
    Consider $H = H(G, \chi, \eta, b, c, 0)$ with $G$ finitely generated abelian. Suppose $q = \chi(c)=\eta(b)^{-1}$ is a primitive $n$th root of unity and $\mathrm{Im}(\chi) = \langle q \rangle$. Let $V$ be an $x$-torsion-free, $y$-torsion simple $H$-module. Then there exist $(\alpha, \lambda) \in \KK^\times \times \widehat{N}$ with $N = \ker(\chi)$, such that $V$ has a $\KK$-basis $\{v_0, \ldots, v_{n-1}\}$ and 
    \[y\cdot v_i = 0,\quad x\cdot v_i = q^i \alpha v_i,\quad h\cdot v_i = \lambda(h)v_i,\]
    and
    \[ c\cdot v_i = \begin{cases}
        \lambda(c^n)v_0 & \text{ if } i = n-1\\ v_{i+1} & \text{ otherwise}
    \end{cases}\]
    for all $0 \leq i \leq n-1$ and $h \in N$. We denote $V$ by $V_{x}(\alpha, \lambda)$. Moreover, $V_{x}(\alpha, \lambda) \simeq V_{x}(\alpha', \lambda')$ as $H$-modules if and only if $\lambda = \lambda'$ and $\alpha = q^i \alpha'$ for some $i$.
\end{corollary}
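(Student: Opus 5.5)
Since $V$ is simple and $y$ is normal in $H=\KK[x,y]\#G$ (we have $yg=\eta(g)gy$ and $yx=xy$), the subspace $yV$ is a submodule. It is therefore either $0$ or $V$, and $y$-torsion rules out $yV=V$. Hence $yV=0$, and $V$ is a simple module over $H/\langle y\rangle\simeq \KK[x]\#G$, with $G$ acting on $x$ through $\chi$.

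Now I apply Proposition~\ref{proposition:torsionfree simples over general skew group ring} with $R=\KK[x]$, $X=\{x\}$ and $\chi_1=\chi$, so that $N=\ker\chi$. By hypothesis $\mathrm{Im}(\chi)=\langle q\rangle$ has order $n$, so $G/N\simeq \ZZ/n$. Since $\chi(c)=q$ is a generator, I take the coset representatives $\Lambda=\{1,c,\dots,c^{n-1}\}$. Note that $c^n\in N$. Part (4) of the proposition says $V\simeq \overline{W}$ for a one-dimensional $x$-torsion-free $\KK[x][N]$-module $W=\KK v$, where $x v=\alpha v$ with $\alpha\neq 0$ and $h v=\lambda(h)v$ for $h\in N$. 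I set $v_i=c^i\otimes v$ for $0\le i\le n-1$.

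Rather than carry the cocycle around, I compute the action directly.
\begin{itemize}
\item For the action of $x$: $x c^i=\chi(c)^i c^i x=q^i c^i x$, so $x v_i=q^i\alpha v_i$.
\item For the action of $N$: $N$ is central because $G$ is abelian, so $h v_i=\lambda(h)v_i$.
\item For the action of $c$: $c v_i=v_{i+1}$ for $i<n-1$, and $c v_{n-1}=c^n\otimes v=\lambda(c^n)v_0$, using $c^n\in N$.
\end{itemize}
Together with $yv_i=0$, this gives exactly the displayed formulas. Since $G=\bigcup_i c^iN$, these formulas determine the whole $H$-action.

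For the isomorphism criterion, suppose first that $\lambda=\lambda'$ and $\alpha=q^i\alpha'$. Then in $V_x(\alpha',\lambda)$ the vector $v'_i$ is an $x$-eigenvector with eigenvalue $\alpha$, and $N$ acts on it by $\lambda$. Sending $v_j\mapsto c^j v'_i$ is then well defined and nonzero, hence an isomorphism by simplicity. (Equivalently, $V_x(\alpha,\lambda)$ is induced from the same one-dimensional $\KK[x][N]$-module.)

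Conversely, suppose the modules are isomorphic. The central subgroup $N$ acts on both by scalars, so $\lambda=\lambda'$. Comparing the spectra of $x$, namely $\{q^j\alpha\}$ and $\{q^j\alpha'\}$, gives $\alpha=q^i\alpha'$ for some $i$. The only delicate point is the boundary term $c v_{n-1}=\lambda(c^n)v_0$, and that needs only $c^n\in\ker\chi$.
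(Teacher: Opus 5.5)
Your proposal is correct and follows essentially the same route as the paper. Like the paper, you reduce to an $x$-torsion-free simple $\KK[x]\#G$-module, apply Proposition~\ref{proposition:torsionfree simples over general skew group ring} with coset representatives $1,c,\ldots,c^{n-1}$, and compare $x$-eigenvalues and the scalar $N$-action for the isomorphism criterion. The differences are only in presentation: you justify $yV=0$ via the normality of $y$ (the paper just asserts it here), and you compute $c\cdot v_{n-1}=\lambda(c^n)v_0$ directly instead of through the cocycle $\gamma$.
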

\begin{proof}
    By Proposition~\ref{proposition:change of variables in the zero derivation case}, $H$ can be written as $\KK[x,y]\# G$ with $x$ being $(1, b)$-primitive and $y$ being $(c^{-1}, 1)$-primitive. Let $V$ be an $x$-torsion-free, $y$-torsion simple $H$-module. Note that $V$ is finite dimensional as $H$ is an affine PI algebra. Moreover, since $yV = 0$, $V$ is an $x$-torsion-free simple $\KK[x]\# G$-module where $G$ acts on $x$ via $\chi$. By hypothesis, $G/N \simeq \langle q\rangle$ is finite cyclic of order $n$, where $N = \ker(\chi)$. Furthermore, since $\mathrm{Im}(\chi)=\langle q\rangle$ and $\chi(c)=q$, the coset $cN$ generates $G/N$, hence $\{1,c,\ldots,c^{n-1}\}$ is a set of representatives. By Proposition~\ref{proposition:torsionfree simples over general skew group ring}, $V \simeq V_x (\alpha, \lambda)$ for some $\alpha \in \KK^\times$ and $\lambda \in \widehat{N}$ such that $V$ has a basis $\{v_i := v_{c^i} \mid 0 \leq i \leq n-1\}$. By the same proposition, $x\cdot v_i = q^i \alpha v_i$ and $h\cdot v_i = \lambda(h)v_i$ for all $h \in N$. Moreover, if $i+j < n$, then $c^i v_j = v_{i+j}$. For $i + j \geq n$, we have $c^i c^j = c^n c^{i+j-n}$ and $\gamma(c^i, c^j) = c^n$. Thus, if $i + j \geq n$, then $c^i v_j = \lambda(c^n)v_{i+j}$ by Proposition~\ref{proposition:torsionfree simples over general skew group ring}, where the subscript $i + j$ is calculated modulo $n$. 

    Consider the modules $V = V_x (\alpha, \lambda)$ and $V' = V_x (\alpha', \lambda')$ for some $\alpha, \alpha' \in \KK^\times$ and $\lambda, \lambda' \in \widehat{N}$. Suppose that $f:V \to V'$ is an isomorphism of left $H$-modules. Then $f(v_0) = \sum_{i=0}^{n-1}\mu_i v'_{i}$ where $\{v_{0}', \ldots, v_{n-1}'\}$ is the basis for $V'$. Hence
    \[\sum_{i=0}^{n-1}\alpha\mu_i v_{i}' = f(\alpha v_0) = x\cdot f(v_0) = \sum_{i=0}^{n-1}\mu_i \alpha' q^i v_{i}'\]
    and $\alpha = \alpha' q^i$ for all $\mu_i \neq 0$. Since $f(v_0)$ is nonzero and since $q$ is a primitive $n$th root of unity, there exists precisely one index $i$ such that $\alpha = \alpha' q^i$ and $f(v_0) = \mu_i v_{i}'$. Moreover, for any $h \in N$, $\lambda(h)\mu_i v_{i}' = f(h\cdot v_0) = \mu_i \lambda'(h)v_{i}'$. Thus $\lambda = \lambda'$. Conversely, if $\lambda = \lambda'$ and $\alpha = \alpha' q^i$ for some $i$, then $f:V \to V'$ defined by $v_0 \mapsto v_{i}'$ is an isomorphism of $H$-modules.
\end{proof}
With an analogous proof, we can determine the $y$-torsion-free, $x$-torsion simple $H$-modules. Note, however, that when writing $H(G, \chi, \eta, b, c, 0) \simeq \KK[x,y]\# G$ as a skew group ring, $y$ is $(c^{-1}, 1)$-primitive.
\begin{corollary}\label{cor:y-torsionfree-x-torsion-simples}
Consider $H=H(G,\chi,\eta,b,c,0)$ with $G$ finitely generated abelian.
Suppose $q=\eta(b)$ is a primitive $n$th root of unity and
$\mathrm{Im}(\eta)=\langle q\rangle$.
Let $V$ be a $y$-torsion-free, $x$-torsion simple $H$-module.
Then there exist $(\alpha,\lambda)\in \KK^\times\times\widehat{N}_y$ with
$N_y=\ker(\eta)$ such that $V$ has a $\KK$-basis $\{v_0,\ldots,v_{n-1}\}$ and
\[
x\cdot v_i=0,\qquad y\cdot v_i=q^i\alpha\,v_i,\qquad h\cdot v_i=\lambda(h)v_i,
\]
and
\[
b\cdot v_i=\begin{cases}
\lambda(b^n)v_0 & \text{if } i=n-1,\\
v_{i+1} & \text{ otherwise}
\end{cases}
\]
for all $0\le i\le n-1$ and $h\in N_y$.
We denote $V$ by $V_y(\alpha,\lambda)$.
Moreover, $V_y(\alpha,\lambda)\simeq V_y(\alpha',\lambda')$ if and only if
$\lambda=\lambda'$ and $\alpha=q^i\alpha'$ for some $i$.
\end{corollary}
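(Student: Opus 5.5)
The argument mirrors the proof of Corollary~\ref{corollary:x-torsionfree y-torsion simples in zero derivation}, with the roles of $x$ and $y$ (and of $\chi,c$ and $\eta,b$) exchanged.

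First, by Proposition~\ref{proposition:change of variables in the zero derivation case} we write $H\simeq \KK[x,y]\# G$, where $y$ now denotes $c^{-1}y$. Since $c$ is central, conjugation by $c$ does not change how $G$ acts on this $y$: we still have $yg=\eta(g)gy$, so $G$ acts on $y$ via $\eta$. The module $V$ is finite dimensional because $H$ is an affine PI algebra by the corollary following Theorem~\ref{theorem:properties of H}. Since $x$ is normal and $V$ is simple and $x$-torsion, the argument of Lemma~\ref{lemma:torsion modules in zero derivation case} gives $xV=0$. Hence $V$ is a simple, $y$-torsion-free module over $H/\langle x\rangle\simeq \KK[y]\# G$, with $G$ acting on $y$ via $\eta$.

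Next we apply Proposition~\ref{proposition:torsionfree simples over general skew group ring} with $R=\KK[y]$, $X=\{y\}$, $\chi_1=\eta$ and $N=N_y=\ker(\eta)$. By hypothesis $G/N_y\simeq\mathrm{Im}(\eta)=\langle q\rangle$ is cyclic of order $n$. Since $\eta(b)=q$ generates this image, the coset $bN_y$ generates $G/N_y$, so $\Lambda=\{1,b,\ldots,b^{n-1}\}$ is a set of coset representatives. Part (4) of the proposition gives $V\simeq \overline{W}$ for a one-dimensional $y$-torsion-free $\KK[y][N_y]$-module $W=\KK v$ with $y v=\alpha v$, $\alpha\neq0$, and $hv=\lambda(h)v$ for some $\lambda\in\widehat{N_y}$. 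Setting $v_i=\overline{b^i}\otimes v$, part (3) yields $y\cdot v_i=\eta(b^i)\alpha v_i=q^i\alpha v_i$ and $h\cdot v_i=\lambda(h)v_i$ for $h\in N_y$.

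For the action of $b$, note that $\overline{b}\,\overline{b^i}=\overline{b^{i+1}}$ whenever $i+1<n$. For $i=n-1$ the cocycle value is $\gamma(b,b^{n-1})=b^n\in N_y$, which gives $b\cdot v_{n-1}=\lambda(b^n)v_0$. This cocycle bookkeeping is the only point needing care, and it is identical to the $c$-case in the earlier corollary. The relation $x\cdot v_i=0$ holds because $xV=0$.

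For the isomorphism criterion, let $f:V_y(\alpha,\lambda)\to V_y(\alpha',\lambda')$ be an isomorphism. Comparing $y$-eigenvalues on $f(v_0)=\sum\mu_iv_i'$ gives $\alpha=q^i\alpha'$ for every $i$ with $\mu_i\neq0$. Since the values $q^i\alpha'$ are distinct for $0\le i\le n-1$ ($q$ being a primitive $n$th root of unity), exactly one such $i$ occurs, so $f(v_0)=\mu_iv_i'$. Applying $f$ to $h\cdot v_0$ for $h\in N_y$ then gives $\lambda=\lambda'$. Conversely, if $\lambda=\lambda'$ and $\alpha=q^i\alpha'$, the vector $v_i'$ is a $y$-eigenvector with eigenvalue $\alpha$ on which $N_y$ acts by $\lambda$ and $x$ acts by $0$. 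By the universal property of induction, the map $v_0\mapsto v_i'$ therefore defines a nonzero homomorphism $V_y(\alpha,\lambda)\to V_y(\alpha',\lambda')$, and it is an isomorphism because both modules are simple.
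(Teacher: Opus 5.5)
Your proposal is correct and follows the approach the paper intends. The paper gives no separate proof here; it only says the argument is analogous to Corollary~\ref{corollary:x-torsionfree y-torsion simples in zero derivation}. Your write-up is that argument with $x,\chi,c$ replaced by $y,\eta,b$: kill $x$ using normality and torsion, then induce from a one-dimensional $\KK[y][N_y]$-module via Proposition~\ref{proposition:torsionfree simples over general skew group ring}, using the representatives $1,b,\ldots,b^{n-1}$. Your universal-property justification of the converse isomorphism is slightly more complete than the paper's unexplained assertion in the $x$-case.
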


%%%%%%%%%%%%%%%%%%%%%%%%%%%%%%%%%%%%%%%%%%%%%%%%%%%%%%%%%%%%
The structure of $x$- and $y$-torsion-free simple modules seems to be more complicated. From Proposition~\ref{proposition:torsionfree simples over general skew group ring}, we conclude that the $x$- and $y$-torsion-free simple $H$-modules have a basis that is indexed by a set of representatives of cosets of $G/N$ where 
\[N = \ker(\chi) \cap \ker(\eta).\]

We restrict ourselves to the case $\eta = \chi^t$ with $t$ relatively prime to the order $n$ of $\chi$. In this case, 
$N = \ker(\chi)  = \ker(\eta)$ and $\mathrm{Im}(\chi)=\mathrm{Im}(\eta) = \langle q\rangle$. 
For any  $g \in G$ there exists $i \in \ZZ$ such that $\chi(g) = \chi(c)^i = q^{-i}$ and $gc^{-i} \in N$. Thus $G/N = \{c^i N \mid i \in \ZZ_n\}$ has order $n$.  From Proposition~\ref{proposition:torsionfree simples over general skew group ring} we obtain:
\begin{corollary}\label{x-y-torsionfree-skewgroupcase}
Let $H=H(G,\chi,\chi^t,b,c,0)$, where $G$ is a finitely generated abelian group,
$q=\eta(b) = \chi(c)^{-1}$ is a primitive $n$th root of unity,
$\langle q\rangle=\mathrm{Im}(\chi)$, and $t$ is relatively prime to $n$.
Let $N=\ker(\chi)$.
Then any $x$- and $y$-torsion-free simple $H$-module $V$ has dimension $n$
and is isomorphic to an induced module $H\otimes_{R[N]}M$ for some
one-dimensional $R[N]$-module, where $R=\KK[x,y]$.
In particular, there exist $\alpha_x,\alpha_y\in\KK^\times$ and
$\lambda\in\widehat{N}$ such that $V$ has a $\KK$-basis
$\{v_0,\ldots,v_{n-1}\}$ and
\[
x\cdot v_i=\alpha_x q^{-i}v_i,\qquad
y\cdot v_i=\alpha_y q^{-it}v_i,\qquad
c^j h\cdot v_i=\lambda(h)v_{i+j},
\]
for all $0\le i,j\le n-1$ and $h\in N$, where indices are taken modulo $n$.
We denote $V$ by $V_{xy}(\alpha_x,\alpha_y,\lambda)$.
Moreover,
\[
V_{xy}(\alpha_x,\alpha_y,\lambda)\simeq
V_{xy}(\alpha_x',\alpha_y',\lambda')
\]
if and only if $\lambda=\lambda'$, $\alpha_x=q^{-i}\alpha_x'$, and
$\alpha_y=q^{-it}\alpha_y'$ for some $i$.
\end{corollary}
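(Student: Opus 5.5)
The plan is to apply Proposition~\ref{proposition:torsionfree simples over general skew group ring} with $R=\KK[x,y]$, $X=\{x,y\}$ and characters $\chi_1=\chi$, $\chi_2=\eta=\chi^t$, using the realisation $H\simeq \KK[x,y]\# G$ of Proposition~\ref{proposition:change of variables in the zero derivation case}. There $y$ is replaced by $c^{-1}y$, which still commutes with $G$ via $\eta$. The first step is to identify $N=\ker(\chi)\cap\ker(\eta)$. Since $\gcd(t,n)=1$, we have $\chi(g)^t=1$ if and only if $\chi(g)=1$, so $N=\ker(\chi)=\ker(\eta)$. Because $\chi(c)=q^{-1}$ generates $\mathrm{Im}(\chi)=\langle q\rangle$, the set $\Lambda=\{1,c,\dots,c^{n-1}\}$ is a system of coset representatives and $|G/N|=n$.

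Parts (1), (2) and (4) of Proposition~\ref{proposition:torsionfree simples over general skew group ring} then show two things. First, every $x$- and $y$-torsion-free simple $H$-module is finite dimensional, since $H$ is PI. Second, it is isomorphic to the induced module $H\otimes_{R[N]}\KK v$ for a one-dimensional $R[N]$-module $\KK v$ with $x v=\alpha_x v$, $y v=\alpha_y v$ ($\alpha_x,\alpha_y\neq0$) and $hv=\lambda(h)v$. This module has dimension $n$.

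Next I would compute the action on the basis $v_i=c^i\otimes v$. Since $c^{-i}$ is in $G$, we get $x c^i=\chi(c)^i c^i x=q^{-i}c^i x$, so $x v_i=\alpha_x q^{-i}v_i$. Likewise $yv_i=\eta(c)^i\alpha_y v_i=q^{-it}\alpha_y v_i$. The group action comes from part (3) with the cocycle $\gamma(c^i,c^j)\in\{1,c^n\}$, exactly as in Corollary~\ref{corollary:x-torsionfree y-torsion simples in zero derivation}. So $c^jh\cdot v_i=\lambda(h)v_{i+j}$ when $i+j<n$, and a factor $\lambda(c^n)$ appears on wraparound. To match the stated formula, I would either absorb this factor by rescaling the basis, using a fixed $n$th root $\mu$ of $\lambda(c^n)$ and setting $v_i'=\mu^{-i}v_i$, or read the formula with the wraparound convention of the previous corollary. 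I expect this bookkeeping to be the only delicate point. Rescaling changes $c\cdot v_i'$ to $\mu v_{i+1}'$, so the cleanest choice is to state the wraparound convention explicitly, as in Corollary~\ref{corollary:x-torsionfree y-torsion simples in zero derivation}.

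For the isomorphism criterion I would argue as in Corollary~\ref{corollary:x-torsionfree y-torsion simples in zero derivation}. Let $f:V\to V'$ be an isomorphism and write $f(v_0)=\sum\mu_iv_i'$. Comparing eigenvalues of $x$ gives $\alpha_x=q^{-i}\alpha_x'$ for each $i$ with $\mu_i\neq0$. Since $q$ is primitive of order $n$, the eigenvalues $\alpha_x'q^{-i}$ are distinct, so exactly one index $i$ survives and $f(v_0)=\mu_iv_i'$. Applying $y$ then gives $\alpha_y=q^{-it}\alpha_y'$, and applying $h\in N$ gives $\lambda=\lambda'$. For the converse, given such an $i$, I would define $f(c^j v_0)=c^j v_i'$. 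This is $H$-linear because $v_0$ and $v_i'$ are both generators with the same $R[N]$-character, so both modules are induced from the same one-dimensional $R[N]$-module; simplicity then makes $f$ an isomorphism.
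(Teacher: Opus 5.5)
Your proof is correct and follows the paper's route. The paper simply invokes Proposition~\ref{proposition:torsionfree simples over general skew group ring} with $N=\ker(\chi)=\ker(\eta)$ and coset representatives $\{1,c,\ldots,c^{n-1}\}$, and the isomorphism criterion is argued as in Corollary~\ref{corollary:x-torsionfree y-torsion simples in zero derivation}, exactly as you do.

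Your wraparound remark is also right. The displayed formula makes $c$ a pure cyclic shift, while $c^n\in N$ must act by $\lambda(c^n)$, so the formula is only consistent when $\lambda(c^n)=1$. In general one must write $c^jh\cdot v_i=\lambda(c^nh)\,v_{i+j-n}$ for $i+j\ge n$, as in the earlier corollary; this does not affect the dimension count or the isomorphism criterion.
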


From Lemma \ref{lemma:torsion modules in zero derivation case} and Corollaries \ref{corollary:x-torsionfree y-torsion simples in zero derivation}, \ref{cor:y-torsionfree-x-torsion-simples} and \ref{x-y-torsionfree-skewgroupcase} we conclude  for $H = H(G, \chi, \chi^t, b, c, 0)$ with $G$ finitely generated abelian, $q = \chi(c)$ a primitive $n$th-root of unity, $t$ relatively prime to $n$ and $\mathrm{Im}(\chi)=\langle q \rangle$, that the finite dimensional simple $H$-modules are stratified as follows:
\begin{equation}
\Irr(H) =  \widehat{G} \sqcup \left( U \times \widehat{N}\right)^2  \sqcup   \left( U^2  \times \widehat{N} \right)
 \end{equation}
 where $U=\KK^\times/\langle q\rangle$, $N=\ker(\chi)$. In particular, all finite dimensional simple left $H$-modules are either $1$-dimensional or $n$-dimensional, which coincides with the results obtained in \cite{Wang-Wu-Tan}.
 
 \medskip
 
 The general case for $\eta\neq \chi^t$ seems to be more complicated. Recall that for $N=\ker(\chi)\cap \ker(\eta)$ we have that $G/N$ embeds as a subgroup of the direct product $G/\ker(\chi) \times G/\ker(\eta)$. Assuming that $\chi(c) = q = \eta(b)^{-1}$ is an $n$th root of unity that generates the images of both characters $\chi$ and $\eta$, one has that $G/N$ is a subdirect product of $\ZZ_n \times \ZZ_n$. Consequently, $|G/N|$ divides $n^2$ and must itself have $n$ as a divisor since $b$ generates a subgroup of order $n$ in $G/N$. Thus, $|G/N| = nd$ for some divisor $d$ of $n$. By Proposition~\ref{proposition:torsionfree simples over general skew group ring}, any finite dimensional  $x$- and $y$-torsion-free simple left $H$-module has dimension $|G/N|=nd$. While for $\eta=\chi^t$, with $t$ relatively prime we have seen that $d=1$. However, the case $d=n$ is also possible. Let $r,s\in\ZZ$ be such that $\chi(b)=q^r$ and $\eta(c)=q^s$. 
 Consider the subset $\Lambda = \{b^i c^j N \mid 0 \leq i, j \leq n-1 \} \subseteq G/N$. Then $b^i c^j \in N$ if and only if 
 $\chi(b^i c^j) = 1 =\eta(b^i c^j)$, which is equivalent to   $q^{ri + j} = 1 = q^{sj - i}$. Hence $rsj = j \ (\text{mod } n)$. If $rs-1$ is relatively prime to $n$, then $j=i=0$. Thus $|G/N|=|\Lambda|=n^2$. A concrete example is as follows. Let $R=\KK[x,y]$ and $G$ the Klein group generated by $b$ and $c$. Let $q=-1$ and $\chi, \eta\in \widehat{G}$ determined by $\chi(c)=-1=\eta(b)$ and $\chi(b)=\eta(c)=1$. Then $n=2$, $N=\{1\}$ and there exists a $4$-dimensional simple module over $H(G,b,c,\chi,\eta,0)$.

\subsection{The Differential Operator Case}

Together with our general assumption of $\KK$ being an algebraically closed field of characteristic $0$, $G$ a finitely generated abelian group and  $\chi$ having finite order, say $n$, we will also assume  in this section $\beta(1 - bc) \neq 0$.  We saw in Remark \ref{remark:chi-eta-inverse} that then $\eta=\chi^{-1}$. Set $e:=c^{-1}-b$ and consider the Hopf algebra $H := H(G, \chi, \chi^{-1}, b, c, 1)$. We introduce some notation that will be used in this subsection. Let $\sigma=\tau_\chi: \KK[G] \to \KK[G]$ denote the left winding automorphism associated with $\chi$, defined by $\sigma(g) = \chi(g)g$ for all $g \in G$. Its inverse is given by $\sigma^{-1}(g) = \chi(g)^{-1}g$. For any element $u \in \KK[G]$ and integer $i \geq 1$, we use the notation:
\[ \wind{u}{i} := \sum_{k=0}^{i-1} \sigma^{-k}(u) = u + \sigma^{-1}(u) + \cdots + \sigma^{-(i-1)}(u). \]
We set $\wind{u}{0} = 0$. Note that if $g \in G$, this notation relates to our previous $q$-numbers by 
\[ \wind{g}{i} = \sum_{k=0}^{i-1} \chi^{-k}(g) g =  [i]_{\chi(g)^{-1}} g. \]
The global assumption  $\eta(b) = \chi(c)^{-1}=:q$ together with $\eta = \chi^{-1}$,  implies $\chi(b) = q^{-1}$ and 
\begin{equation}\label{eq:wind}
 \wind{e}{i} = [i]_{q^{-1}} c^{-1} - [i]_q b. 
 \end{equation}
Using this notation, the commutation relation from Equation~\eqref{equation:commutation relations 1} in Lemma \ref{lemma:change of variables in the nonzero derivation case} can be written compactly as:
\begin{equation}\label{equation:new commutation equation}
    yx^i - x^i y = \left([i]_q c^{-1} - [i]_{q^{-1}} b\right)x^{i-1} = x^{i-1} \wind{e}{i}
\end{equation}

\medskip

\subsubsection*{Torsion modules.} 
Extend $\rho \in \widehat{G}$ to an algebra homomorphism $\rho: \KK[G] \to \KK$, which will be denoted with the same symbol. To any $\rho \in \widehat{G}$, we  attach an infinite dimensional $H$-module $V(\rho)$ with basis $\{v_i \mid i \geq 0\}$ and with the $H$-module structure given by
\[x\cdot v_i = v_{i+1},\quad g\cdot v_i = \rho(\sigma^{-i}(g))v_i\]
for all $i \geq 0$ and $g \in G$ and
\[y\cdot v_i = \begin{cases}
    0 & \text{ if } i = 0\\ \rho(\wind{e}{i})v_{i-1} & \text{ if } i \geq 1
\end{cases}\]
It is routine to verify that this does define an $H$-module structure. For example, 
\begin{align*}
    (yx - xy - e)v_i & = yv_{i+1} - x\rho(\wind{e}{i})v_{i-1} - \rho(\sigma^{-i}(e))v_{i}\\
    & = \left(\rho(\wind{e}{i+1}) - \rho(\wind{e}{i})- \rho(\sigma^{-i}(e))\right)v_i = 0
\end{align*}
because $\wind{e}{i+1} = \wind{e}{i} + \sigma^{-i}(e)$, for $i>0$.

Recall that $\chi=\eta^{-1}$ has order $n$. Hence $\wind{e}{n}=0$. Let $d$ be the least positive integer such that $\wind{e}{d}=0$ and consider the factor module 
\[ \overline{V}(\rho): = V(\rho)/Hv_d, \]
 which is a $d$-dimensional $\KK$-space with basis $\{v_0, \ldots, v_{d-1}\}$ (where we still denote the basis element $v_i + Hv_d$ by $v_i$). 
\begin{proposition}
$\overline{V}(\rho)$ is  an $x$- and $y$-torsion simple  $H$-module.
\end{proposition}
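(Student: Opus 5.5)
The plan is to identify $Hv_d$ explicitly, read off the torsion properties from the formulas defining $V(\rho)$, and then prove simplicity by a weight argument for the $G$-action followed by lowering with $y$ and raising with $x$.

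\emph{Step 1: $\overline{V}(\rho)$ has basis $v_0,\dots,v_{d-1}$.} Let $W_d=\operatorname{span}\{v_i\mid i\ge d\}$. From the defining formulas, $x$ maps $v_i$ to $v_{i+1}$ and each $g\in G$ acts diagonally, so both preserve $W_d$. For $y$ we have $y\cdot v_i=\rho(\wind{e}{i})v_{i-1}$, which lies in $W_d$ for $i>d$. At $i=d$ we get $y\cdot v_d=\rho(\wind{e}{d})v_{d-1}=0$ because $\wind{e}{d}=0$. Hence $W_d$ is a submodule containing $v_d$. Conversely $v_{d+k}=x^kv_d$, so $W_d=Hv_d$, and the images of $v_0,\dots,v_{d-1}$ form a basis of the quotient.

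\emph{Step 2: torsion.} In the quotient, $x^{d-i}\cdot v_i=v_d=0$ and $y^{i+1}\cdot v_i=0$ (each application of $y$ lowers the index by one, and $y\cdot v_0=0$). Thus every vector is killed by a power of $x$ and by a power of $y$, so the module is both $x$-torsion and $y$-torsion.

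\emph{Step 3: weight decomposition.} Let $W\neq0$ be a submodule. An element $g\in G$ acts on $v_i$ by the scalar $\rho(\sigma^{-i}(g))=\chi(g)^{-i}\rho(g)$. Since $\wind{e}{n}=0$, we have $d\le n$. Because $\chi$ has order $n$, the characters $g\mapsto \chi(g)^{-i}\rho(g)$ for $0\le i<d$ are pairwise distinct. By linear independence of distinct characters (a Vandermonde-type argument), each projection onto $\KK v_i$ is a linear combination of actions of group elements. Therefore $W$ contains some basis vector $v_i$, and applying powers of $x$ gives $v_j\in W$ for all $i\le j\le d-1$.

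\emph{Step 4: lowering with $y$ (the main obstacle).} To reach $v_0$ we apply $y$: $y\cdot v_j=\rho(\wind{e}{j})v_{j-1}$. This works precisely when $\rho(\wind{e}{j})\neq 0$ for $1\le j<d$. By \eqref{eq:wind} and $[j]_{q^{-1}}=q^{1-j}[j]_q$,
\[
\rho(\wind{e}{j})=[j]_q\bigl(q^{1-j}\rho(c)^{-1}-\rho(b)\bigr).
\]
If $\rho(bc)=q^{1-j}$ for some $1\le j<d$, this coefficient vanishes, and then $\operatorname{span}\{v_j,\dots,v_{d-1}\}$ is a proper submodule. So Step 4 is exactly where the hypotheses matter.

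I would resolve this by reading $d$ as the least $i\ge1$ with $\rho(\wind{e}{i})=0$. This exists and satisfies $d\le n$, and Step 1 goes through verbatim under this reading, since only $y\cdot v_d=\rho(\wind{e}{d})v_{d-1}=0$ was used. Under the literal definition ($\wind{e}{d}=0$ in $\KK[G]$) one must instead add the hypothesis $\rho(bc)\notin\{q^{1-j}\mid 1\le j<d\}$. With either reading, every coefficient $\rho(\wind{e}{j})$ for $1\le j<d$ is nonzero. Repeated application of $y$ then gives $v_0\in W$, hence $v_j=x^jv_0\in W$ for all $j<d$, so $W=\overline{V}(\rho)$ and the module is simple.
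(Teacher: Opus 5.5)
Your argument is correct, and your reading of $d$ is the one the paper actually needs. The paper's proof asserts that the factors $\rho(\wind{e}{d-k})$ are nonzero ``by the minimality of $d$''. That only follows if $d$ is minimal with $\rho(\wind{e}{d})=0$, which is the definition the paper itself adopts in its later classification of torsion simples. Your formula $\rho(\wind{e}{j})=[j]_q\bigl(q^{1-j}\rho(c)^{-1}-\rho(b)\bigr)$ correctly shows that if $d$ is instead minimal for $\wind{e}{d}=0$ in $\KK[G]$, the module can contain the proper submodule $\operatorname{span}\{v_j,\dots,v_{d-1}\}$.

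Your route differs from the paper's in how a basis vector is extracted from a nonzero submodule $W$.

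The paper takes $w=\sum k_iv_i\in W$ with lowest nonzero index $i$ and notes that $x^{d-1-i}w=k_iv_{d-1}$. This works because $x$ acts on the quotient as a nilpotent shift, which pushes all higher components past $v_{d-1}$ to zero. A single descent with $y$ from $v_{d-1}$ then yields every $v_j$.

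You instead split $W$ into $G$-weight spaces, using independence of the distinct characters $\chi^{-i}\rho$ for $0\le i<d$. This step uses $d\le n$ and the order of $\chi$. You then climb with $x$, descend with $y$ to $v_0$, and climb again.

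The paper's trick is shorter and uses nothing about $G$. Yours shows additionally that every submodule is spanned by the basis vectors it contains, which makes your degenerate-case submodule transparent. You also check two things the paper leaves implicit: that $Hv_d=\operatorname{span}\{v_i\mid i\ge d\}$, so the quotient has basis $v_0,\dots,v_{d-1}$, and that the quotient is $x$- and $y$-torsion.

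One minor point is inherited from the paper: $\wind{e}{n}=0$ needs $q\neq 1$. For $q=1$ your $d$ exists only if $\rho(bc)=1$, and then $d=1$, so the bound $d\le n$ used in your Step 3 still holds.
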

\begin{proof}
Let $W$ be a nonzero submodule of $\overline{V}(\rho)$ and pick a  non-zero element $w = k_0 v_0 + \ldots + k_{d-1}v_{d-1} \in W$,  for some $k_0, \ldots, k_{d-1} \in \KK$. If $i$ is the least integer such that $k_i \neq 0$, then from the action of $x$ we see that $x^{d-i-1}k_{i}^{-1}w = v_{d-1}$ in $\overline{V}(\rho)$. Hence $v_{d-1} \in W$. From the action of $y$, we have
    \[y^j v_{d-1} = \left(\prod_{k=1}^{j}\rho(\wind{e}{d-k}) \right)v_{d-1-j}\]
    for all $1 \leq j \leq d-1$. Since $\KK$ is a field and by the minimality of $d$, none of the terms in this product is zero and therefore $W$ contains every basis element $v_i,\ 0 \leq i \leq d-1$. That is, $W = \overline{V}(\rho)$ and $\overline{V}(\rho)$ is simple. 
\end{proof}
\begin{proposition} The function $\widehat{G} \to \Irr(H)$ given by $\rho \mapsto \overline{V}(\rho)$ is injective.
\end{proposition}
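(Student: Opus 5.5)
Suppose $\overline{V}(\rho)\simeq\overline{V}(\rho')$ via an $H$-isomorphism $f$. The plan is to recover $\rho$ intrinsically from the module, as the character by which $G$ acts on the unique line annihilated by $y$.

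First, the integer $d$ (least positive with $\wind{e}{d}=0$) does not depend on $\rho$, since $\wind{e}{i}$ is an element of $\KK[G]$ independent of $\rho$. Hence both modules have dimension $d$, and bases $\{v_0,\dots,v_{d-1}\}$ and $\{v_0',\dots,v_{d-1}'\}$ respectively.

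Next, compute the kernel of $y$ on $\overline{V}(\rho)$. We have $y\cdot v_0=0$ and $y\cdot v_i=\rho(\wind{e}{i})v_{i-1}$ for $1\le i\le d-1$. The scalar $\rho(\wind{e}{i})$ is nonzero for these $i$; this is exactly what the simplicity proof used when it asserted that the factors $\rho(\wind{e}{d-k})$ do not vanish. Since $y$ maps $v_i$ to a multiple of $v_{i-1}$ and the basis vectors are distinct, it follows that $\ker(y)=\KK v_0$.

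I expect the one delicate point to be this non-vanishing claim. Minimality of $d$ only gives $\wind{e}{i}\neq 0$ in $\KK[G]$, not $\rho(\wind{e}{i})\neq0$. However, the simplicity proposition just proved relies on precisely this non-vanishing, so I would invoke it as established there. Alternatively, one can avoid it altogether: every vector in $\ker(y)$ lies in the span of those $v_i$ with $\rho(\wind{e}{i})=0$ (together with $v_0$). Each basis vector $v_i$ is a $G$-weight vector of weight $\rho\circ\sigma^{-i}$, so one can instead look at the weights of $G$ on $\ker y$ and, in particular, at the vector $v_0$ of smallest $x$-degree. A cleaner variant is to use the unique line $\KK v_{d-1}$ annihilated by $x$, which holds since $x\cdot v_{d-1}=v_d=0$ and $x$ is injective on the other basis vectors. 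That line has $G$-weight $\rho\circ\sigma^{-(d-1)}$, and since $\sigma$ is an automorphism, this weight also determines $\rho$. The $x$-kernel argument has no non-vanishing issue, so I would actually run the proof with it.

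Finally, $f$ commutes with $x$, so it maps $\ker(x)=\KK v_{d-1}$ onto $\ker(x)=\KK v'_{d-1}$; thus $f(v_{d-1})=\mu v'_{d-1}$ with $\mu\neq0$. Applying $g\in G$ gives
\[
\rho(\sigma^{-(d-1)}(g))\,\mu v'_{d-1}=f(g\cdot v_{d-1})=g\cdot f(v_{d-1})=\rho'(\sigma^{-(d-1)}(g))\,\mu v'_{d-1}.
\]
Hence $\chi(g)^{d-1}\rho(g)=\chi(g)^{d-1}\rho'(g)$ for all $g\in G$, so $\rho=\rho'$, which proves injectivity.
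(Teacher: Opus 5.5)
Your final argument via $\ker x=\KK v_{d-1}$ is correct and is essentially the paper's proof run from the other end of the $x$-string. The paper decomposes $\varphi(v_0)$ into $G$-weight vectors to get $\varphi(v_0)=k_iw_i$ with $\rho_1=\rho_2\chi^{-i}$, and then uses $x^{d-1}w_i=0$ for $i>0$ to force $i=0$. You instead let $f$ preserve the one-dimensional kernel of $x$ and compare a single weight, which skips the weight-separation step. Like the paper's proof, yours never needs $\rho(\wind{e}{i})\neq 0$, so your caution on that point is justified and correctly sidestepped. The factor should be $\chi(g)^{-(d-1)}$ rather than $\chi(g)^{d-1}$, but it cancels on both sides.
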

\begin{proof}
Let $\rho_1, \rho_2\in \widehat{G}$ and set $V_1=\overline{V}(\rho_1)$ and $V_2=\overline{V}(\rho_2)$.  Suppose  $\varphi: V_1 \to V_2$ is an $H$-module isomorphism. Then the dimensions of $V_1$ and $V_2$ are equal and there exists $d\geq 1$ that is the least positive integer such that $\rho_i(\wind{e}{d})=0$, for $i=1,2$.  Suppose that the sets $\{v_{0}, v_{1}, \ldots, v_{d-1}\}$ and $\{w_{0}, w_{1}, \ldots, w_{d-1}\}$ are bases for $V_{1}$ and $V_{2}$, respectively, with their $H$-action as defined above.    Then $\varphi(v_0) = \sum_{i=0}^{d-1} k_i w_i$, for some scalars $k_i$, not all zero. Then for any $g\in G$:
\begin{equation*}
\sum_{i=0}^{d-1} \rho_1(g) k_i w_i =  \varphi(g v_0) = g \varphi(v_0) = \sum_{i=0}^{d-1} k_i \rho_2(\sigma^{-i}(g)) w_i = \sum_{i=0}^{d-1} k_i \chi^{-i}(g) \rho_2(g) w_i\end{equation*}
In particular, $\rho_1(g) = \rho_2(g)\chi^{-i}(g)$, for any $i$ with $k_i\neq 0$. Since $\chi$ has order $n$, there exists exactly one index $i$ such that $k_i\neq 0$. Hence, $\varphi(v_0)=k_iw_i$ and $\rho_1(g)=\rho_2(g)\chi^{-i}(g)$, for any $g\in G$. However, if $i>0$, then $\varphi(v_{d-1})=x^{d-1} \varphi(v_0) = k_i x^{d-1} w_i = 0$, which contradicts that $\varphi$ is injective. Hence $i=0$ and $\rho_1=\rho_2$. \end{proof}

\begin{proposition}
Every finite dimensional simple $x$- and $y$-torsion $H$-module is isomorphic to $\overline{V}(\rho)$ for some character $\rho\in \widehat{G}$.
\end{proposition}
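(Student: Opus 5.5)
Let $M$ be a finite dimensional simple $H$-module that is both $x$- and $y$-torsion. The plan is to find a vector $v_0\in M$ with $yv_0=0$ that is a common eigenvector for $G$, and then to compare $M$ with $V(\rho)$ through the map it induces.

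First we construct $v_0$. Since $M$ is $y$-torsion, the subspace $M_0=\{m\in M\mid ym=0\}$ is nonzero: take any $m\neq 0$ and the largest $k$ with $y^km\neq 0$. The relation $yg=\chi^{-1}(g)gy$ shows that $M_0$ is $G$-stable. As $G$ is abelian and $\KK$ is algebraically closed, the commuting operators of $G$ on the finite dimensional space $M_0$ have a common eigenvector $v_0$ with character $\rho\in\widehat{G}$. Note that $G$ is finitely generated, so finitely many commuting operators suffice.

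Next we build a homomorphism $\varphi:V(\rho)\to M$ by $v_i\mapsto x^iv_0$. To check it is $H$-linear, we verify the three actions.
\begin{itemize}
\item The $x$-action matches by construction.
\item For $g\in G$, the relation $xg=\chi(g)gx$ gives $gx^iv_0=\chi^{-i}(g)x^igv_0=\rho(\sigma^{-i}(g))x^iv_0$.
\item For $y$, Equation~\eqref{equation:new commutation equation} together with $yv_0=0$ gives $yx^iv_0=x^{i-1}\wind{e}{i}v_0=\rho(\wind{e}{i})x^{i-1}v_0$.
\end{itemize}
Since $V(\rho)$ is defined by exactly these formulas, $\varphi$ is an $H$-module map. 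Its image contains $v_0\neq 0$, so by simplicity $\varphi$ is surjective.

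It remains to show that $\ker\varphi\supseteq Hv_d$, so that $\varphi$ factors through $\overline{V}(\rho)$. The factored map is then nonzero between simple modules, hence an isomorphism. This step is the main obstacle, and we handle it as follows.
\begin{itemize}
\item Since $M$ is $x$-torsion, some $x^Nv_0=0$. Let $m$ be minimal with $x^mv_0=0$, so $m\geq 1$.
\item Apply $y$: $0=yx^mv_0=\rho(\wind{e}{m})x^{m-1}v_0$, and $x^{m-1}v_0\neq 0$, so $\rho(\wind{e}{m})=0$.
\item Define $d$ as the least positive integer with $\rho(\wind{e}{d})=0$. This is the definition used in the simplicity proof of $\overline{V}(\rho)$, which needs only the nonvanishing of $\rho(\wind{e}{i})$ for $i<d$; the paper's phrase ``$\wind{e}{d}=0$'' should be read as $\rho(\wind{e}{d})=0$, since otherwise the quotient need not be a submodule quotient. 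Then $d\le m$.
\item Conversely, $Hv_d=\operatorname{span}\{v_i\mid i\geq d\}$ in $V(\rho)$: it is spanned by the $v_i$ with $i\ge d$ because $y v_d=\rho(\wind{e}{d})v_{d-1}=0$, and $x$ and $G$ preserve this span. The quotient $\overline{V}(\rho)$ is therefore simple by the previous proposition. The image of this span under $\varphi$ is the submodule $\operatorname{span}\{x^iv_0\mid i\ge d\}$, which by simplicity of $M$ is either $0$ or $M$.
\item If it were all of $M$, then $v_0$ would lie in the span of the $x^iv_0$ with $i\geq d\geq 1$, so $v_0\in xM$. Then $M=Hv_0$ would be spanned by elements $x^jgv_0$ with $j\ge 0$, and iterating would force $x$ to be surjective on $M$. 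But $x$ is nilpotent on $M$: $M$ is $x$-torsion and finite dimensional, and $xM$ is a submodule because $x$ is normal modulo lower terms. This contradiction shows the image is $0$.
\end{itemize}

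A cleaner alternative for the last step is to note that $M\simeq V(\rho)/K$ for a proper nonzero submodule $K$, which must contain $v_m$. Every nonzero submodule of $V(\rho)$ contains some $v_j$ with $\rho(\wind{e}{j})=0$, and from there one descends via $y$ to $v_d$. I would carry out this descent argument, using $G$-weights to isolate single basis vectors: the weights $\rho\chi^{-i}$ repeat with period $n$, so I would combine them with $x$-degree.
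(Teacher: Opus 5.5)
Your main argument is correct and has the same skeleton as the paper's proof. Both take a $G$-eigenvector $v_0$ with $yv_0=0$ and the map $V(\rho)\to M$, $v_i\mapsto x^iv_0$. Both then take the least $m$ with $x^mv_0=0$, which forces $\rho(\wind{e}{m})=0$, and use simplicity of $M$ to rule out $x^dv_0\neq 0$.

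The difference is in that last step. The paper uses centrality of $x^n$ to get $x^nv=0$, so that $Hx^dv=\bigoplus_{i=d}^{n-1}\KK x^iv$, and then separates $v$ from this span by $G$-weights. You instead note that $M=\operatorname{span}\{x^jv_0\mid j\ge 0\}$, so $v_0\in xM$ forces $xM=M$. This contradicts nilpotency of $x$ on a finite dimensional $x$-torsion module.

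Your version never uses $\wind{e}{n}=0$ (equivalently, centrality of $x^n$). That identity fails when $\chi(c)=1$, a case the standing assumptions do not exclude. Your version also gets the existence of $d$ directly from $\rho(\wind{e}{m})=0$. Your reading of $d$ as the least $d\ge 1$ with $\rho(\wind{e}{d})=0$ is the one the paper's proofs actually use.

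Two things should be removed.

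First, the aside that $xM$ is a submodule because $x$ is ``normal modulo lower terms'' is false. We have $y(xM)\subseteq xM+eM$, and for example in $\overline{V}(\rho)$ with $d\ge 2$ one has $yv_1=\rho(e)v_0\notin xM$. You do not need the aside, since $xM=M$ follows directly as above.

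Second, the ``cleaner alternative'' does not work. Descending by $y$ stalls at any $v_j$ with $\rho(\wind{e}{j})=0$, because $yv_j=0$ there. More fundamentally, no argument that ignores the simplicity of $M$ can succeed. Whenever $j>d$ is another zero of $i\mapsto\rho(\wind{e}{i})$ (e.g.\ $j=d+n$ when $\chi(c)\neq 1$), $Hv_j=\operatorname{span}\{v_i\mid i\ge j\}$ is a nonzero submodule of $V(\rho)$ that does not contain $v_d$. Keep your first argument.
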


\begin{proof}
Let $V$ be a finite dimensional simple $x$- and $y$-torsion $H$-module. Let $\{g_1,\ldots, g_m\}$ be a finite generating set of $G$.
Since the operators $g_i$ commute and $\KK$ is algebraically closed, they have
a common eigenvector $0\neq v\in V$. Since $V$ is $y$-torsion, there exists a least
$a\ge 1$ such that $y^a v=0$, and then $y^{a-1}v$ is again a common eigenvector for the
$g_i$ and is killed by $y$. Replacing $v$ by $y^{a-1}v$, we may assume $yv=0$.
Since $V$ is simple, $Hv=V$. Moreover, as $v$ is a common eigenvector for the group generators $g_i$, there are non-zero scalars $\rho(g_i)$, such that $g_iv = \rho(g_i)v$, for any $i$. Hence $\rho$ defines a character in $\widehat{G}$, which we identify with its algebra character $\rho:\KK[G]\to\KK$. 

%%%%

Let $m\geq 1$ be the least integer such that $x^mv = 0$. Since $\chi$ has order $n$, the element $x^{n}$ is central in $H$ and acts on $V$ by scalar multiplication, say $x^{n}v = \mu v$ for some $\mu \in \mathbb{K}$. But then from $x^{nm}v = \mu^{m}v = 0$, it follows that $\mu = 0$. In particular, this implies that $m \leq n$. 

On the other hand, let $d$ be the least positive integer such that $\rho([e]_{i}^{\sigma})=0$, which exists since $\rho([e]_{n}^{\sigma}) = 0$. In particular, $d \leq n$. 
If $x^{d}v \neq  0$ then since $V$ is simple,  $V = Hx^{d}v$. However, for some $g \in G$, the elements $v$ and $x^{d} v, \ldots, x^{n-1} v$ are eigenvectors with different eigenvalues and hence linearly independent, implying $v \not\in H x^{d} v = \bigoplus_{i=d}^{n-1} \KK x^iv$, which is a contradiction. Thus $x^dv=0$ and $d\leq m$. Since $x^{m - 1}v \neq 0$, from 
\[
0 = y x^{m} v = x^{m-1} [e]_{m}^\sigma v = \rho([e]_{m}^\sigma) x^{m-1} v
\]
we have $\rho([e]_{m}^{\sigma}) = 0$ and so $d = m$.  Hence, there is a map $\phi: V(\rho) \to V$ given by $v_i \mapsto x^i v$ with  $\phi(v_d) = x^dv=0$, which induces an $H$-module isomorphism $\overline{V}(\rho) \simeq V$. 
\iffalse
Then $\varphi$ is $x$-linear by construction and
$G$-linear because 
$g\cdot x^i v = \rho(\sigma^{-i}(g))x^i v$.
Moreover, for $i\ge 1$,
\[
y\cdot \varphi(v_i)=yx^i v=(x^i y + x^{i-1}\wind{e}{i})v
=\rho(\wind{e}{i})x^{i-1}v=\varphi(\rho(\wind{e}{i})v_{i-1})
=\varphi(y\cdot v_i),
\]
and also $y\cdot \varphi(v_0)=yv=0=\varphi(y\cdot v_0)$.
Thus $\varphi$ is an $H$-module homomorphism.
\fi
Since $\overline{V}(\rho)$ and $V$ are simple, $\varphi$ is an isomorphism.
\end{proof}

\begin{example} Let $q$ be an $n$th root of unity. Consider $G=\ZZ^2=\langle b,c\rangle$, where $b,c$ are generators of $G$. Define the character $\chi\in \widehat{G}$ by $\chi(b)=\chi(c)=q^{-1}$ and consider the Hopf algebra $H=H(G,b,c,\chi,\chi^{-1},1)$. Then $H$ is generated as an algebra by $x,y,b^{\pm 1},c^{\pm 1}$, subject to
$$ xy=yx + c^{-1}-b, \qquad bx = qxb, \qquad cx=qxc, \qquad yb=qby, \qquad yc=qcy,$$
where $b,c$ and group-like, $x$ is $(1,b)$-primitive and $y$ is $(c^{-1},1)$-primitive.

For any $1\leq d \leq n$ define a character $\rho \in \widehat{G}$ by $\rho(c)=1$ and $\rho(b)=q^{-d}$. Then $\overline{V}(\rho)$ is a $d$-dimensional simple torsion module over $H$.
\end{example}

\subsubsection*{Torsion-free modules.} We now characterize finite dimensional torsion-free simple $H$-modules, beginning with the $x$-torsion-free modules. The classification of $y$-torsion-free modules is analogous to the $x$-torsion-free case due to the symmetry of the construction. We already remarked earlier that $x$-torsion-free finite dimensional simple modules exist only if $\chi$ has finite order, which is our global assumption.% and in the rest of this section we will assume that $\chi$ has finite order $n$.

The classification of $x$-torsion-free simple left $H$-modules is similar
to that of the $x$- and $y$-torsion modules constructed above, with
minor but essential differences. Consider the $n$-dimensional vector space $V$ with basis $\{v_0, v_1, \ldots, v_{n-1}\}$, where $n$ is the order of $\chi$.  Let  $\rho\in \widehat{G}$ and  $\lambda, \mu \in \KK$ with $\lambda \neq 0$. We define an $H$-module structure on $V$ by the rules
\begin{eqnarray*}
x\cdot v_i &=& \begin{cases}
    v_{i+1} & \text{ if } 0 \leq i \leq n-2\\ \lambda v_0 & \text{ if } i = n-1
\end{cases}\\
y\cdot v_i &=& \begin{cases}
    \lambda^{-1}\mu v_{n-1} & \text{ if } i = 0\\ (\mu + \rho(\wind{e}{i}))v_{i-1} & \text{ if } 1 \leq i \leq n-1
\end{cases}
\end{eqnarray*}
and
\[g\cdot v_i = \rho(\sigma^{-i}(g)) v_i\]
for all $g\in G$ and $0 \leq i \leq n-1$. Indeed, one checks $(yx-xy)\cdot v_i = \rho(\sigma^{-i}(e))v_i$, 
 for all $i$. We will denote this $H$-module by $V_x (\rho, \lambda, \mu)$. 
\begin{proposition}
    The module $V_x (\rho, \lambda, \mu)$ is simple.
\end{proposition}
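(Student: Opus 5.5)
Let $W$ be a nonzero submodule of $V=V_x(\rho,\lambda,\mu)$. The plan is to first show that $W$ contains a basis vector $v_j$, and then to use the invertible operator $x$ to recover all of $V$.

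\emph{Step 1: $W$ contains some $v_j$.} The vectors $v_0,\ldots,v_{n-1}$ are simultaneous eigenvectors for $G$. On $v_i$, each $g\in G$ acts by $\rho(\sigma^{-i}(g))=\chi(g)^{-i}\rho(g)$. Since $\chi$ has order $n$, $\mathrm{Im}(\chi)$ is cyclic of order $n$. Hence there is $g_0\in G$ with $\chi(g_0)$ a primitive $n$th root of unity, and then the eigenvalues $\chi(g_0)^{-i}\rho(g_0)$, $0\le i\le n-1$, are pairwise distinct. Take a nonzero $w=\sum_i k_iv_i\in W$ and pick an index $i_0$ with $k_{i_0}\ne0$. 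Applying the Lagrange-type operator
\[
\prod_{j\neq i_0}\bigl(g_0-\chi(g_0)^{-j}\rho(g_0)\bigr)
\]
to $w$ kills every component except the $i_0$-th, which is multiplied by a nonzero scalar. Therefore $v_{i_0}\in W$. This is the same eigenvalue-separation argument used in the injectivity proof for $\overline{V}(\rho)$.

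\emph{Step 2: $W=V$.} The operator $x$ sends $v_i\mapsto v_{i+1}$ for $i\le n-2$ and $v_{n-1}\mapsto\lambda v_0$ with $\lambda\ne0$. So $x$ cyclically permutes the basis up to nonzero scalars, and $x^{k}v_{i_0}$, $k=0,\ldots,n-1$, runs through nonzero multiples of every $v_j$. Hence $W=V$, and $V$ is simple. The action of $y$ (and so $\mu$) plays no role in this argument. This is the essential difference from the torsion case, where $x$ was nilpotent and the $y$-action had to be used to climb back down.

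\emph{Main obstacle.} The only delicate point is that $V_x(\rho,\lambda,\mu)$ is a well-defined $H$-module in the first place. Before Step 1, I would verify the relations directly.

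\begin{itemize}
\item The relations $xg=\chi(g)gx$ and $yg=\chi(g)^{-1}gy$ are immediate from the eigenvalues $\chi^{-i}(g)\rho(g)$, with indices taken modulo $n$, which is consistent because $\chi^n=1$.
\item For $yx-xy=e$ at an interior index $1\le i\le n-2$, one checks that $yx\,v_i-xy\,v_i$ equals $\rho([e]^\sigma_{i+1}-[e]^\sigma_i)v_i=\rho(\sigma^{-i}(e))v_i$.
\item At $i=0$, the $y$-action is defined as $yv_0=\lambda^{-1}\mu v_{n-1}$. The check uses $[e]^\sigma_1=e$, and one computes $yxv_0-xyv_0=(\mu+\rho(e))v_0-\mu v_0=\rho(e)v_0$.
\item At $i=n-1$, the check uses $[e]^\sigma_n=0$, which holds since $[n]_q=[n]_{q^{-1}}=0$ when $q$ has order $n$. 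One computes $yxv_{n-1}-xyv_{n-1}=\mu v_{n-1}-(\mu+\rho([e]^\sigma_{n-1}))v_{n-1}=-\rho([e]^\sigma_{n-1})v_{n-1}=\rho(\sigma^{-(n-1)}(e))v_{n-1}$.
\end{itemize}

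Once these boundary checks are confirmed, Steps 1 and 2 complete the proof.
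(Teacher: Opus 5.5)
Your proof is correct and is essentially the paper's argument. You isolate a basis vector in $W$ using the distinct $G$-weights $\chi(g)^{-i}\rho(g)$, then reach every $v_j$ by applying $x$ (using $\lambda\neq 0$); your single $g_0$ with $\chi(g_0)$ a primitive $n$th root of unity, together with the Lagrange product, is just a cleaner form of the paper's minimal-support reduction, which uses a separate $h$ for each pair of indices.

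One small correction concerns your well-definedness check, which the statement presupposes: $q=\chi(c)^{-1}$ need not have order $n$. The identity $[e]^\sigma_n=0$ holds exactly when $q\neq 1$ (since $q^n=1$ automatically), and for $q=1$ the relation at $v_{n-1}$ instead forces $n\rho(e)=0$. The paper also leaves this restriction tacit.
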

\begin{proof}
We use the same argument as in the torsion case with minor tweaks. Let $W$ be a nonzero submodule of $V_x (\rho, \lambda, \mu)$ and let $w = \sum_{i=0}^{n-1}k_i v_i$ be an element of $W$ of support. We claim that the support of $w$ is a singleton. Suppose this is not true and let $i_0 < i_1$ be distinct elements in the support of $w$. 
    %Then
    %
    %\[gw = \sum_{i=0}^{n-1}\rho(\sigma^{-i}(g))k_i v_i\]
    %
    %for all $g \in G$. 
Since $\chi$ has order  $n$ and $0<i_1-i_0<n$, there exists $h\in G$ such that $\chi^{i_1-i_0}(h)\neq1$. For such an $h$, the element 
    %
    %\begin{align*}
      %  \rho(h)w - \rho(\sigma^{i_1}(h))hw & = \sum_{i=0}^{n-1}\left(\rho(h) - \rho(\sigma^{i_1 - i}(h))\right)k_i v_i \\
      %  & = \sum_{i\neq i_1}\rho(h)(1 - \chi^{i_1 - i}(h))k_i v_i
    %\end{align*}
    \begin{equation*}
      \left(\rho(h) - \rho(\sigma^{i_1}(h))h\right)w  = \sum_{i=0}^{n-1}\left(\rho(h) - \rho(\sigma^{i_1 - i}(h))\right)k_i v_i   = \sum_{i\neq i_1}\rho(h)(1 - \chi^{i_1 - i}(h))k_i v_i
    \end{equation*}
    is certainly an element of $W$ whose support is smaller than that of $w$, contradicting the choice of $w$. Hence, the support of $w$ is a singleton. From the action of $x$, $W$ contains every basis element $v_i$ and so $W = V_x (\rho, \lambda, \mu)$ and $V_x (\rho, \lambda, \mu)$ is simple.
\end{proof}
The classification of these modules up to isomorphism will be addressed next. 
%First we record the following easy fact without proof:
%
%\begin{lemma}\label{lemma:auxilliary equation}
%$\wind{e}{n-k} + \sigma^{-(n-k)}(\wind{e}{k}) = 0$.
%\end{lemma}
%
\begin{proposition}\label{prop:iso-V-lambda-mu-rho}
$V_x(\rho_1, \lambda_1,\mu_1) \simeq V_x (\rho_2, \lambda_2,\mu_2)$ as left $H$-modules if and only if there exists $0\leq k <n$ such that 
    $$\lambda_1=\lambda_2, \qquad \rho_1=\rho_2\chi^{-k},  \qquad \mu_1 = \mu_2 + \rho_2(\wind{e}{k}).$$
\end{proposition}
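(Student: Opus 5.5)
The plan is to exploit three structural features of $V=V_x(\rho,\lambda,\mu)$. First, the $G$-weights of the basis vectors are pairwise distinct. Second, $x^n$ is central and acts by a scalar. Third, $xy$ acts diagonally with explicitly computable eigenvalues.

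\emph{Setting up the three features.} In $V_x(\rho,\lambda,\mu)$ the vector $v_i$ is a common $G$-eigenvector with character $\rho\chi^{-i}$, since $g\cdot v_i=\rho(\sigma^{-i}(g))v_i=\chi^{-i}(g)\rho(g)v_i$. Because $\chi$ has order $n$, the characters $\rho\chi^{-i}$, $0\le i<n$, are pairwise distinct. Next, $x^n$ is central in $H$ (as used earlier in the torsion case), and on $V_x(\rho,\lambda,\mu)$ it acts as $\lambda\cdot\mathrm{id}$, since $x^n v_i=\lambda v_i$ for every $i$. Finally, a direct computation from the defining rules gives
\[
xy\cdot v_0=\mu v_0, \qquad xy\cdot v_i=(\mu+\rho(\wind{e}{i}))v_i \quad (1\le i\le n-1).
\]
With the convention $\wind{e}{0}=0$, this is the single formula $xy\cdot v_i=(\mu+\rho(\wind{e}{i}))v_i$.

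\emph{Necessity.} Let $\varphi:V_x(\rho_1,\lambda_1,\mu_1)\to V_x(\rho_2,\lambda_2,\mu_2)$ be an isomorphism, and write $\{w_i\}$ for the basis of the target. Comparing the scalars by which the central element $x^n$ acts gives $\lambda_1=\lambda_2$. The image $\varphi(v_0)$ is a $G$-eigenvector of weight $\rho_1$. Since the weights $\rho_2\chi^{-i}$ of the $w_i$ are distinct, we get $\varphi(v_0)=a w_k$ for a unique $k$ with $0\le k<n$ and some $a\neq 0$, and $\rho_1=\rho_2\chi^{-k}$. Applying $\varphi$ to $xy\cdot v_0=\mu_1v_0$ and comparing with $xy\cdot w_k=(\mu_2+\rho_2(\wind{e}{k}))w_k$ yields $\mu_1=\mu_2+\rho_2(\wind{e}{k})$.

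\emph{Sufficiency.} This is the step needing the most bookkeeping. Given $k$ satisfying the three conditions, define $\varphi(v_i):=x^i w_k$. This is $w_{k+i}$ for $k+i<n$, and $\lambda w_{k+i-n}$ otherwise. The map $\varphi$ is bijective and $x$-linear by construction. Using $x^n=\lambda$ on both modules, it is also $x$-linear at the wrap-around $v_{n-1}\mapsto x^n w_k=\lambda w_k$. It is $G$-linear because $v_i$ has weight $\rho_1\chi^{-i}=\rho_2\chi^{-(k+i)}$, which is the weight of $w_{k+i \bmod n}$.

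For $y$, use that $x$ acts invertibly on both modules (since $\lambda\neq0$), so $y=x^{-1}(xy)$ as operators. It therefore suffices to check that $\varphi$ intertwines $xy$, that is,
\[
\mu_1+\rho_1(\wind{e}{i})=\mu_2+\rho_2(\wind{e}{(k+i)\bmod n}).
\]
By hypothesis and $\rho_1=\rho_2\circ\sigma^{-k}$, the left-hand side equals $\mu_2+\rho_2\bigl(\wind{e}{k}+\sigma^{-k}(\wind{e}{i})\bigr)=\mu_2+\rho_2(\wind{e}{k+i})$. It then remains to see that $\wind{e}{k+i}=\wind{e}{k+i-n}$ when $k+i\ge n$. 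This holds because $\sigma^n=\mathrm{id}$ and $\wind{e}{n}=0$, so that $\wind{e}{n+j}=\wind{e}{n}+\sigma^{-n}(\wind{e}{j})=\wind{e}{j}$. This periodicity, together with the wrap-around factor $\lambda$, is the only delicate point, and it closes the argument.
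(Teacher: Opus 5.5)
Your argument is correct and is essentially the paper's. The distinct $G$-weights $\rho_2\chi^{-i}$ force $\varphi(v_0)$ to be a multiple of a single $w_k$, the scalar action of $x^n$ gives $\lambda_1=\lambda_2$, and the converse uses the same map $v_i\mapsto x^i w_k$. The only variation is that you read off $\mu$ and check $y$-linearity through the diagonal operator $xy$, using $y=x^{-1}(xy)$. This avoids the paper's split into $k=0$ and $k>0$, but it costs you the periodicity $\wind{e}{n+j}=\wind{e}{j}$; that is harmless, since $\wind{e}{n}=0$ is already needed for $V_x(\rho,\lambda,\mu)$ to be a module. The paper instead expands $yx^iw_k=(x^iy+x^{i-1}\wind{e}{i})w_k$ and compares coefficients of $x^{i-1}w_k$, never reducing indices modulo $n$.
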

\begin{proof}
($\Rightarrow$) 
Set $V_1=V_x(\rho_1, \lambda_1,\mu_1)$ and $V_2=V_x (\rho_2, \lambda_2,\mu_2)$ and suppose  $\varphi:V_1\to V_2$ is an $H$-module isomorphism. Let $\{v_0,\ldots, v_{n-1}\}$ be a basis of $V_1$ and $\{w_0, \ldots, w_{n-1}\}$ be a basis of $V_2$. Consider 
\[
\varphi(v_0) = \sum_{i=0}^{n-1} k_i w_i.
\]
For any $g\in G$ we obtain
\[
\sum_{i=0}^{n-1} k_i \rho_1(g) w_i = \varphi(gv_0) = g\varphi(v_0) = \sum_{i=0}^{n-1} k_i gw_i 
= \sum_{i=0}^{n-1} k_i \rho_2(\sigma^{-i}(g)) w_i 
\]
Thus, for any $i$ with $k_i\neq 0$ we conclude $\rho_1(g) = \rho_2(\sigma^{-i}(g)) = \rho_2(g)\chi^{-i}(g)$. Since $\chi$ has order $n$ and since $\varphi$ is injective, there exists precisely one index $i$ such that $k_i\neq 0$ and we conclude $\rho_1 = \rho_2 \chi^{-i}$ as well as $\varphi(v_0)= k_i w_i$. Furthermore, 
\[ \lambda_1 k_i w_i = \varphi(x^n v_0) = x^n \varphi(v_0) =k_i \lambda_2 w_i.\]
Hence $\lambda_1=\lambda_2 =: \lambda $.  For $i>0$ we have 
\begin{eqnarray*}
 y\varphi(v_0) &=& k_i yw_i = k_i (\mu_2 + \rho(\wind{e}{i}))w_{i-1},\\
 \varphi(yv_0) &=& \lambda^{-1}\mu_1 \varphi(v_{n-1})  = \lambda^{-1}\mu_1 k_i x^{n-1} w_i   = k_i \mu_1 w_{i-1} .
 \end{eqnarray*}
Thus $\mu_1 = \mu_2 + \rho(\wind{e}{i})$. For $i=0$ we have 
\begin{eqnarray*}
 y\varphi(v_0) &=& k_0 y w_0 = k_0\lambda^{-1}\mu_2 w_{n-1} \\
 \varphi(yv_0) &=& \lambda^{-1}\mu_1 \varphi(v_{n-1})  = \lambda^{-1}\mu_1 k_0 w_{n-1}.
 \end{eqnarray*}
Hence $\mu_1 = \mu_2 = \mu_2 + \rho(\wind{e}{0})$.

($\Leftarrow$) Conversely, suppose $\lambda_1=\lambda_2$ and that there exists $0\leq k < n$ such that  $\rho_1=\rho_2\chi^{-k} $ and $\mu_1 = \mu_2 + \rho_2(\wind{e}{k})$ hold. 
Define a linear map $\varphi: V_1 \to V_2$ by $\varphi(v_{i}) = x^i w_k$, for $0\leq i<n$. Then
\[ \varphi(xv_i) = \varphi(v_{i+1}) = x^{i+1}w_k = x \varphi(v_i),\]  for $i<n-1$,  while  $\varphi(xv_{n-1})=\lambda \varphi(v_0)= \lambda w_k = x x^{n-1} w_k= x\varphi(v_{n-1}).$

For  $g \in G$, we have, using $\rho_1 = \rho_2 \chi^{-k} = \rho_2\circ \sigma^{-k}$:
\[\varphi(g v_{i}) = \rho_1 (\sigma^{-i}(g)) \varphi(v_{i}) = \rho_1 (\sigma^{-i}(g)) x^iw_k = \rho_2 (\sigma^{-i-k}(g)) x^iw_k\]

while
\[g \varphi(v_{i}) = g x^i w_k = \chi^{-i}(g) x^i \rho_2(\sigma^{-k}(g)) w_k  =\rho_2(\sigma^{-i-k}(g)) \varphi(v_i).\]
Thus $\varphi$ is $G$-linear. To check the $y$-linearity, note
\[
\varphi(yv_0)=\lambda^{-1}\mu_1\varphi(v_{n-1})
=\mu_1\lambda^{-1}x^{n-1}w_k 
=(\mu_2 + \rho_2(\wind{e}{k})w_{k-1} = y w_k = y\varphi(v_0).
\]

Finally, for $i \geq 1$, we have
\[\varphi(y v_{i}) = \varphi((\mu_1 + \rho_1 (\wind{e}{i}))v_{i-1}) = (\mu_1 + \rho_1 (\wind{e}{i})) x^{i-1}w_{k}\]
while
\[y \varphi(v_{i}) = yx^i w_k = (x^iy + x^{i-1}\wind{e}{i})w_k = (\mu_2 + \rho_2 (\wind{e}{i+k})) x^{i-1} w_{k}\]
Since $\mu_1 = \mu_2 + \rho_2(\wind{e}{k})$ and $\rho_1=\rho_2\circ \sigma^{-k}$, we obtain
\[ \mu_1 + \rho_1 (\wind{e}{i}) = \mu_2 + \rho_2(\wind{e}{k} + \sigma^{-k} (\wind{e}{i})) = \mu_2 + \rho_2 (\wind{e}{i+k}).\]
Thus $\varphi$ is a non-zero homomorphism of $H$-modules and hence an isomorphism as $V_1$ and $V_2$ are simple.
\end{proof}
We conclude by saying that these are the only $x$-torsion-free simple $H$-modules.
\begin{proposition}
    Every $x$-torsion-free finite dimensional simple left $H$-module is isomorphic to $V_x (\rho, \lambda, \mu)$ for some $\lambda, \mu \in \KK$ with $\lambda \neq 0$, and  $\rho\in \widehat{G}$.
\end{proposition}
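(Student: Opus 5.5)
Let $V$ be a finite dimensional simple $x$-torsion-free $H$-module. We aim to find a vector $v$ that is a common eigenvector for $G$, then produce the basis $v_i = x^i v$, $0 \le i < n$, and read off the parameters $\rho, \lambda, \mu$. At the end we build a nonzero $H$-map $V_x(\rho,\lambda,\mu)\to V$ and conclude by simplicity, exactly as in the torsion case.

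\textbf{Step 1: a common eigenvector and the scalar $\lambda$.} Since $\chi$ has order $n$ and $\chi(b)=\chi(c)=q^{-1}$, the element $x^n$ is central in $H$. Indeed, $x^n g = \chi^n(g) g x^n = g x^n$, and $yx^n - x^n y = x^{n-1}\wind{e}{n} = 0$ by Equation~\eqref{equation:new commutation equation}, because $\wind{e}{n}=0$. By Schur's lemma $x^n$ acts on $V$ as a scalar $\lambda$. Since $V$ is $x$-torsion-free, $x$ is injective on $V$, so $\lambda\neq 0$. The operators of the finitely many generators of $G$ commute, so they have a common eigenvector $v$; let $\rho\in\widehat G$ be its weight, $gv=\rho(g)v$. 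Then $x^i v$ is a $G$-eigenvector of weight $\rho\circ\sigma^{-i}$. The weights $\rho\chi^{-i}$ for $0\le i<n$ are pairwise distinct, since $\chi$ has order $n$. Hence $v, xv,\ldots,x^{n-1}v$ are nonzero (by injectivity of $x$) and linearly independent. Their span $W$ is stable under $G$ and under $x$, using $x\cdot x^{n-1}v=\lambda v$.

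\textbf{Step 2: the action of $y$ and the scalar $\mu$.} This is the main obstacle: we need $W$ to be $y$-stable and $y$ to act by the prescribed formula. Since $yg=\chi^{-1}(g)gy$, the element $yv$ has $G$-weight $\rho\chi$, and $\rho\chi=\rho\chi^{-(n-1)}$ is the weight of $x^{n-1}v$. We need the weight space of $\rho\chi$ to be exactly $\KK x^{n-1}v$. To get this, first note that $V=Hv$ is spanned by elements $x^iy^jg v$ by the PBW basis of Theorem~\ref{theorem:properties of H}(1), and such an element has weight $\rho\chi^{-i+j}$. So every weight in $V$ is of the form $\rho\chi^k$, and by Step 1 every such weight occurs in $W$. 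Now decompose $V$ into $G$-weight spaces $V_{\rho\chi^{-k}}$, $k\in\ZZ_n$. The map $x$ sends $V_{\rho\chi^{-k}}$ injectively into $V_{\rho\chi^{-k-1}}$, and $n$ steps return to the start, so all these weight spaces have a common dimension $m$. I would then argue via centrality of $x^n$ that we may choose $v$ cleverly. Consider the operator $T=x^{n-1}y$, which preserves the weight space $V_\rho$ of $v$. We replace $v$ by an eigenvector of $T$ on $V_\rho$, say $Tv=\mu v$. Then $yv=\lambda^{-1}x\,x^{n-1}yv=\lambda^{-1}\mu\,x^{n-1}v$, using $x^n=\lambda$. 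With this choice $W$ is $y$-stable: for $i\ge1$, Equation~\eqref{equation:new commutation equation} gives
\[
yx^iv=x^iyv+x^{i-1}\wind{e}{i}v=(\lambda^{-1}\mu\lambda+\rho(\wind{e}{i}))x^{i-1}v=(\mu+\rho(\wind{e}{i}))x^{i-1}v,
\]
where we commute $\wind{e}{i}\in\KK[G]$ past nothing, since it acts on $v$ by $\rho$. So $W$ is a nonzero submodule, hence $W=V$ by simplicity, and in particular $m=1$.

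\textbf{Step 3: the isomorphism.} The linear map $v_i\mapsto x^iv$ is compatible with the actions of $x$ (including the wrap-around $x\cdot v_{n-1}=\lambda v_0$), of $G$, and of $y$, by the formulas in Steps 1 and 2. It is therefore an $H$-isomorphism $V_x(\rho,\lambda,\mu)\simeq V$; it is injective because $V_x(\rho,\lambda,\mu)$ is simple.
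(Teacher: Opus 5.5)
Your overall plan is the paper's, but Step 2, which you yourself single out as the main obstacle, fails as written. The operator $T=x^{n-1}y$ does not preserve the weight space $V_\rho$. From $gy=\chi(g)yg$ and $gx=\chi^{-1}(g)xg$, the element $y$ multiplies $G$-weights by $\chi$ and $x^{n-1}$ multiplies them by $\chi^{-(n-1)}=\chi$. So $T$ maps $V_\rho$ into $V_{\rho\chi^{2}}$, which differs from $V_\rho$ as soon as $n>2$, and you cannot in general find an eigenvector of $T$ inside $V_\rho$. Your next computation is also inconsistent with this choice. If $x^{n-1}yv=\mu v$, then $yv=\lambda^{-1}x\,(x^{n-1}yv)=\lambda^{-1}\mu\,xv$, not $\lambda^{-1}\mu\,x^{n-1}v$. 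Since $xv$ and $yv$ have different weights $\rho\chi^{-1}\neq\rho\chi$ for $n>2$, this would even force $\mu=0$ and $yv=0$.

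The fix is to use the weight-preserving operator $T=xy$ instead. Because $\eta=\chi^{-1}$, one has $xyg=\chi^{-1}(g)\chi(g)gxy=gxy$. Thus $xy$ commutes with $G$ and with the central element $x^n$, so it preserves $V_\rho$. Choosing $v\in V_\rho$ with $xyv=\mu v$ gives $\lambda\, yv=x^{n}yv=x^{n-1}(xy)v=\mu\, x^{n-1}v$, which is exactly the formula you want. Your computation of $yx^iv$ for $i\ge 1$ and Step 3 then go through unchanged. With this correction your argument is the paper's proof, which simply takes $v$ to be a common eigenvector of the commuting operators $x^n$, $xy$ and the generators of $G$. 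The detour through the weight-space decomposition and the common dimension $m$ then becomes unnecessary.
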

\begin{proof}
    Let $V$ be a finite dimensional simple $H$-module that is $x$-torsion-free. The elements $x^n, xy$, the $g_i$ ($1 \leq i \leq d$) act on $V$ as commuting linear operators and since $\KK$ is algebraically closed, they must have a common eigenvector, say $v$, in $V$. Let $x^n \cdot v = \lambda v$, $xy\cdot v = \mu v$, and $g \cdot v = \rho(g)v$ for some $\lambda, \mu \in \KK$ with $\lambda \neq 0$ and $\rho\in \widehat{G}$.

    The elements $v_i := x^i v$ for $0 \leq i \leq n-1$ are eigenvectors under the action of some $g \in G$ with distinct eigenvalues and so they are linearly independent. It is easy to see that 
    \[x\cdot v_{n-1} = \lambda v_0\quad \text{and}\quad x \cdot v_i = v_{i+1}\quad \text{for}\quad 0 \leq i \leq n-2\]
    and that
    \[g\cdot v_i = \rho(\sigma^{-i}(g))v_i\]
    for all $g \in G$.
    From
    \[\lambda y v_0 = x^n y v = x^{n-1}xyv = x^{n-1}\mu v = \mu v_{n-1}\]
    we also see that $y\cdot v_0 = \lambda^{-1}\mu v_{n-1}$. Moreover, for $1 \leq i \leq n-1$ we have from \eqref{equation:new commutation equation}
    \[ y\cdot v_i = yx^i v_0 = (x^i y + x^{i-1}\wind{e}{i})v_0 = (\mu + \rho(\wind{e}{i}))v_{i-1}\]
    So, the sum $\sum_{i=0}^{n-1}\KK v_i$ is direct and is a nonzero submodule of $V$ with the above $H$-module structure. Since $V$ is simple, we must have $V = \sum_{i=0}^{n-1}\KK v_i$ and that $V \simeq V_x (\rho, \lambda, \mu)$.
\end{proof}
The above construction of $x$-torsion-free modules can be carried out with minor changes to describe the $y$-torsion-free simple $H$-modules without a change. Therefore, we give the description of these modules without proof.

By symmetry with the $x$-torsion-free case, the construction of $y$-torsion-free simple $H$-modules is governed by the way $y$ commutes with $x$ and the group algebra $\KK[G]$. Consequently, one expects the left winding automorphism associated with the character $\eta$ to appear in the description of such modules. In the differential operator case considered here, we have $\eta=\chi^{-1}$, and therefore the corresponding winding automorphism is $\sigma^{-1}$.

With this in mind, let $V$ be an $n$-dimensional $\KK$-vector space with basis $\{v_0,v_1,\ldots,v_{n-1}\}$, let $\rho:\KK[G]\to\KK$ be a character, and let $\lambda,\mu\in\KK$ with $\mu\neq 0$. We define an $H$-module structure on $V$ by the rules
\[y\cdot v_i = \begin{cases}v_{i+1} & \text{ if } 0 \leq i \leq n-2\\ \mu v_0 & \text{ if } i = n-1\end{cases}\]
\[x\cdot v_i = \begin{cases}\mu^{-1}\lambda v_{n-1} & \text{ if } i = 0\\ (\lambda - \rho(\wind{e}{i}))v_{i-1} & \text{ if } 1 \leq i \leq n-1
\end{cases}\]
and
\[g\cdot v_i = \rho(\sigma^{i}(g))v_i\]
for all $g \in G$ and $0 \leq i \leq n-1$. We will denote this $H$-module by $V_y (\rho, \lambda, \mu)$.
\begin{proposition}
    Retain the above notation.
    \begin{enumerate}
        \item The module $V_y (\rho, \lambda, \mu)$ is simple.
        \item Two such modules $V_y (\rho_1, \lambda_1, \mu_1)$ and $V_y (\rho_2, \lambda_2, \mu_2)$ are isomorphic if and only if 
        $$\mu_1 = \mu_2, \qquad \rho_2 = \rho_1 \circ \sigma^{-k}, \qquad \lambda_1 = \lambda_2 - \rho_{2}(\wind{e}{k}),$$
        for some $0\leq k < n$.
        \item Every $y$-torsion-free finite dimensional simple left $H$-module is isomorphic to $V_y (\rho, \lambda, \mu)$ for some $\lambda, \mu \in \KK$ with $\mu \neq 0$, and  $\rho\in \widehat{G}$.
    \end{enumerate}
\end{proposition}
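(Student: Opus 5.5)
The plan is to mirror the three-step argument used above for $V_x(\rho,\lambda,\mu)$, interchanging the roles of $x$ and $y$. The one genuinely new ingredient is the analogue of \eqref{equation:new commutation equation} with $x$ and $y$ swapped. Since $yg=\chi(g)^{-1}gy$, we have $yu=\sigma^{-1}(u)y$, equivalently $uy=y\sigma(u)$, for $u\in\KK[G]$. From $xy=yx-e$ an induction on $i$, exactly as in Lemma~\ref{lemma:change of variables in the nonzero derivation case}, should give
\[
xy^i-y^ix=-\,y^{i-1}\sum_{k=0}^{i-1}\sigma^{k}(e),
\]
together with $g\,y^iv=\rho(\sigma^{i}(g))\,y^iv$ whenever $gv=\rho(g)v$.

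\emph{Step 1: the rules define an $H$-module.} The $G$-relations hold because $y$ shifts the $\sigma$-weight by one: $v_i$ has weight $\rho\circ\sigma^{i}$, and $x$ moves $v_i$ to $v_{i-1}$ while $y$ moves it to $v_{i+1}$, with the wrap-around at $i=0$ and $i=n-1$ consistent since $\sigma^n=\mathrm{id}$. The remaining check is $(yx-xy)v_i=e\,v_i=\rho(\sigma^{i}(e))v_i$. This reduces to an identity of the form
\[
\rho(\text{coefficient of }v_{i+1})-\rho(\text{coefficient of }v_i)=\rho(\sigma^{i}(e)),
\]
together with the boundary cases $i=0$ and $i=n-1$. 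The boundary cases use $\sum_{k=0}^{n-1}\sigma^{k}(e)=0$, which holds because $\chi$ has order $n$ and $q=\chi(c)^{-1}\neq 1$.

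\emph{Step 2: simplicity.} This is verbatim the support-shrinking argument for $V_x$. For $i_0<i_1$, pick $h$ with $\chi^{i_1-i_0}(h)\neq1$; the operator $\rho(\sigma^{i_1}(h))-h$ strictly shrinks the support of an element. So any nonzero submodule contains some $v_i$, and $y$, acting by $v_i\mapsto v_{i+1}$ and $v_{n-1}\mapsto\mu v_0$ with $\mu\neq0$, generates everything.

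\emph{Step 3: the isomorphism criterion.} Write $\varphi(v_0)=\sum k_iw_i$. The $G$-action forces a single index $k$ with $k_k\neq 0$ and $\rho_1=\rho_2\circ\sigma^{k}$, which is the stated relation read in the other direction. The central element $y^n$ gives $\mu_1=\mu_2$. Comparing $x\varphi(v_0)$ with $\varphi(xv_0)=\mu^{-1}\lambda_1\varphi(v_{n-1})=\mu^{-1}\lambda_1 y^{n-1}\varphi(v_0)$ gives the relation between $\lambda_1$ and $\lambda_2$. For the converse, set $\varphi(v_i)=y^iw_k$ and verify $x$-linearity using the cocycle identity
\[
\wind{e}{i+k}=\wind{e}{k}+\sigma^{-k}(\wind{e}{i}).
\]

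\emph{Step 4: exhaustion.} Given a $y$-torsion-free finite dimensional simple $V$, take a common eigenvector $v$ of the commuting operators $y^n$, $yx$ and the generators of $G$. Here $y^n$ is central because $\chi$ has order $n$ and $[n]_q=[n]_{q^{-1}}=0$; $yx$ commutes with $G$ because $\eta=\chi^{-1}$. Write $y^nv=\mu v$ with $\mu\neq0$ by torsion-freeness, $yxv=\lambda v$, and $gv=\rho(g)v$. Then set $v_i=y^iv$. These are independent, having distinct $G$-weights $\rho\circ\sigma^{i}$. From $\mu xv_0=xy^nv=y^{n-1}(yx)v=\lambda v_{n-1}$ we get $xv_0=\mu^{-1}\lambda v_{n-1}$, and $xv_i$ for $i\ge1$ comes from the commutation formula. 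Their span is a nonzero submodule, hence all of $V$.

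I expect the main obstacle to be Step 1, specifically matching the commutation formula, which naturally produces $\sum_k\sigma^{k}(e)$, with the stated coefficient $\lambda-\rho(\wind{e}{i})$, which is built from $\sigma^{-k}$. Here $\rho(\sigma^{k}(e))$ and $\rho(\sigma^{-k}(e))$ differ in general, since $\sigma^{k}(e)=q^{k}c^{-1}-q^{-k}b$. I would first check whether evaluating at weight $\rho\circ\sigma^{i}$ reconciles them, via
\[
\rho\Bigl(\sigma^{i}\Bigl(\sum_{k=0}^{i-1}\sigma^{-k-1}(e)\Bigr)\Bigr)\ \text{versus}\ \rho(\wind{e}{i}).
\]
If it does not, I would adjust the convention in the formula for $V_y(\rho,\lambda,\mu)$, replacing $\wind{e}{i}$ by its $\sigma^{-1}$-analogue, and carry that change consistently through Steps 3 and 4.
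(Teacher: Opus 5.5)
Your plan is the symmetry argument the paper intends; the paper states this proposition without proof and only appeals to symmetry with $V_x$. Your suspicion in the last paragraph is justified. However, the check you postpone fails, so the proposal cannot prove the statement as printed: the statement is false whenever $q^2\neq 1$. Your reconciling expression equals $\rho\bigl(\sum_{j=0}^{i-1}\sigma^{j}(e)\bigr)$. Since $\sigma^{\pm j}(e)=q^{\pm j}c^{-1}-q^{\mp j}b$, it differs from $\rho(\wind{e}{i})$ by $\sum_{j<i}(q^{j}-q^{-j})(\rho(c)^{-1}+\rho(b))$. Concretely, with the printed rules, for $1\le i\le n-2$,
\[
(yx-xy)v_i=\rho\bigl(\wind{e}{i+1}-\wind{e}{i}\bigr)v_i=\rho(\sigma^{-i}(e))v_i,\qquad e\,v_i=\rho(\sigma^{i}(e))v_i .
\]
For trivial $\rho$ and $i=1$ these are $(q^{-1}-q)v_1$ and $(q-q^{-1})v_1$. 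So in the paper's example $G=\ZZ^2$ with $n=3$, the printed $V_y(\rho,\lambda,\mu)$ is not an $H$-module, and (1)--(3) cannot be proved as stated. When $q^2=1$ one has $\sigma^{j}(e)=\sigma^{-j}(e)$, and nothing goes wrong.

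You must therefore take your ``adjust'' branch, which is the correct repair. Set $E_i:=\sum_{k=0}^{i-1}\sigma^{k}(e)$, the bracket built from $\sigma^{-1}$, the winding automorphism of $\eta$. Define $x\cdot v_i=(\lambda-\rho(E_i))v_{i-1}$, and correct (2) to $\lambda_1=\lambda_2-\rho_2(E_k)$. With this change your Steps 1, 2 and 4 go through as written, using $E_1=e$, $E_{i+1}-E_i=\sigma^{i}(e)$ and $xy^i=y^ix-y^{i-1}E_i$. The fact $E_n=0$ needs $q\neq 1$, as you note; the paper uses this tacitly when it asserts $\wind{e}{n}=0$.

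In the converse of Step 3, though, the identity you quote, $\wind{e}{i+k}=\wind{e}{k}+\sigma^{-k}(\wind{e}{i})$, belongs to the $V_x$ case and does not fit here. Now $\rho_1=\rho_2\circ\sigma^{k}$, so $\rho_1(E_i)=\rho_2(\sigma^{k}(E_i))$. What you need is $E_{i+k}=E_k+\sigma^{k}(E_i)$, which holds for the corrected bracket. Alternatively, compute directly in $V_2$ with $xy^iw_k=y^ixw_k-\rho_2(\sigma^{k}(E_i))\,y^{i-1}w_k$.
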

The duality between the $x$- and $y$-torsion-free modules can alternatively be seen from the symmetry between the roles of $x$ and $y$ in the construction of $H$. That is, if we interchange the roles of $x$ and $y$ and write $H$ as $H(G, \chi^{-1}, \chi, c^{-1}, b^{-1}, 1)$, we get a Hopf algebra that is isomorphic to $H(G, \chi, \chi^{-1}, b, c, 1)$. 
%%%%%
\section{Tensor Products of Simple Modules in the Skew Group Case}
In this section, we calculate the tensor products of simple modules in the skew group ring case and give multiplication rules in the representation ring of $H$. Recall that, as an abelian group, the representation ring of $H$ is generated by the isomorphism classes of finite-dimensional simple modules, modulo the relations
\[[V] = [V_1] + [V_2] + \ldots + [V_r]\]
whenever $V$ has a composition series with composition factors $V_1, \ldots, V_r$. Multiplication is induced by the tensor products 
\[[V]\cdot [W] = [V \otimes W].\]

We continue with our assumptions that $\KK$ is an algebraically closed field of characteristic zero, $G$ is a finitely generated abelian group, and the characters $\chi$ and $\eta$ have finite order.

Under our running assumption that $\eta = \chi^t$ for some $t$ relatively prime to the order $n$ of $\chi$, we have $\ker(\chi) = \ker(\eta)$ and thus $N = \ker(\chi) = \ker(\eta)$. Moreover, the group $G/N \subset \KK^\times$ is cyclic and has order $n$, say $G/N \simeq \langle aN\rangle$ for some $a \in G$. 

Recall from Section~\ref{section:finite dimensional simple $H$-modules} that every finite dimensional $x$- and $y$-torsion simple $H$-module $V$ is one-dimensional where the action of $g \in G$ on any $v \in V$ is given by $g\cdot v = \rho(g)v$ for some character $\rho \in \widehat{G}$. We denoted this module by $\KK_\rho$. On the other hand, an $x$- or $y$-torsion-free simple left $H$-module is $n$-dimensional with basis $\{v_0, v_1, \ldots, v_{n-1}\}$ where the action of $H$ is given by
\[x\cdot v_i = \chi(a^i)\lambda v_i, \quad y\cdot v_i = \eta(a^i)\mu v_i, \quad h\cdot v_i = \rho(h)v_i\]
and 
\[a\cdot v_i = v_{i+1(\text{mod }n)}\]
for all $i = 0, 1, \ldots, n-1$ and $h \in N$ where $\rho\in \widehat{N}$, $\lambda, \mu \in \KK$ are scalars at least one of which is nonzero. We will denote this module by $V(\lambda, \mu, \rho)$. 

We study the tensor products of simple $H$-modules case by case.

\subsection{Torsion vs Torsion}
Let $\rho$ and $\sigma$ be two characters in $\widehat{G}$ and consider the torsion simple $H$-modules $\mathbb{K}_\rho$ and $\mathbb{K}_\sigma$ with basis $\{v\}$ and $\{w\}$ respectively. For all $g \in G$, we have
\[g\cdot (v\otimes w) = (gv) \otimes (gw) = \rho(g)\sigma(g)v\otimes w\]
Also,
\[x\cdot (v\otimes w) = (x\otimes 1 + b \otimes x)(v\otimes w) = xv\otimes w + bv\otimes xw = 0\]
Similarly, $y\cdot (v\otimes w) = 0$ so the tensor product is again torsion and $\mathbb{K}_\rho \otimes \mathbb{K}_\sigma \simeq \mathbb{K}_{\rho\sigma}$.

Thus, in the representation ring, we have $[\mathbb{K}_\rho]\cdot [\mathbb{K}_\sigma] = [\mathbb{K}_{\rho\sigma}] = [\mathbb{K}_\sigma]\cdot [\mathbb{K}_\rho]$. In particular, the subring $\mathbb{Z}[\widehat{G}]$ generated by the torsion simple $H$-modules embeds as a subring in the representation ring of $H$. 

\subsection{Torsion vs Torsion-free}
Let $\rho \in \widehat{G}, \sigma\in\widehat{N}$ be two characters. Let $\lambda, \mu \in \mathbb{K}$ be two scalars, not both zero. Consider the torsion simple $H$-module $\mathbb{K}_\rho$ with basis $\{u\}$ and the torsion-free simple $H$-module $V (\lambda, \mu, \sigma)$ with its standard basis $\{v_i \mid 0 \leq i \leq n-1 \}$. 

Since $x$ acts as zero on $\mathbb{K}_\rho$, we have for every $0 \leq i \leq n-1$ that
\[x\cdot (u \otimes v_i) = xu \otimes v_i + bu \otimes xv_i = \rho(b)u\otimes xv_i\]
So, on the tensor product, the action of $x$ is modified by a factor of $\rho(b)$. Similarly, for every $0 \leq i \leq n-1$, we have
\[y\cdot (u \otimes v_i) = yu \otimes c^{-1}v_i + u \otimes yv_i = u \otimes yv_i\]
Finally, since every $g \in G$ can be written as $g = a^j h$ for some $h \in N$ and $0 \leq j \leq n-1$, we have
\[g \cdot (u \otimes v_i) = a^j h \cdot (u \otimes v_i) = a^j hu \otimes a^j hv_i = \rho(h)\rho(a^j) \sigma(h)(u \otimes v_{i+j}).\]
Hence, after a change of the basis elements $u \otimes v_i \mapsto u \otimes \rho(a^i)v_i$ in the tensor product, we see that in the representation ring we have
\[[\mathbb{K}_\rho]\cdot [V(\lambda, \mu, \sigma)] = [V(\rho(b)\lambda, \mu, (\rho|_N) \sigma)].\]

If we now calculate the right tensor product $V (\lambda, \mu, \sigma) \otimes \mathbb{K}_\rho$, we see that there is a symmetry between the actions of $x$ and $y$ in the two cases. From 
\[x\cdot (v_i \otimes u) = xv_i \otimes u + bv_i \otimes xu = xv_i \otimes u\]
and
\[y\cdot (v_i \otimes u) = yv_i \otimes c^{-1}u + v_i \otimes yu = \rho(c^{-1})yv_i \otimes u\]
we see that the action of $x$ does not change and the action of $y$ is modified by $\rho(c^{-1})$. Similar to the previous case, for an element $g = a^j h \in G$, we have
\[a^j h \cdot v_i \otimes u=\rho(h)\rho(a^j)\sigma(h)v_{i+j} \otimes u\]
and again after a change of the basis elements $v_i \otimes u \mapsto \rho(a^i)v_i \otimes u$, it follows that
\[[V(\lambda, \mu, \sigma)]\cdot [\mathbb{K}_\rho] = [V(\lambda, \rho(c^{-1})\mu, (\rho|_N)\sigma)].\]
\subsection{Torsion-free vs Torsion-free.}
Let $\rho_1, \rho_2 \in \widehat{N}$ be characters and $\lambda_1, \lambda_2, \mu_1, \mu_2$ be nonzero scalars. Consider the torsion-free simple $H$-modules $V(\lambda_1,  \mu_1, \rho_1)$ and $V(\lambda_2, \mu_2, \rho_2)$ with their standard bases $\{v_0, v_1, \ldots, v_{n-1}\}$ and $\{w_0, w_1, \ldots, w_{n-1}\}$, respectively. Let $V := V(\lambda_1, \mu_1, \rho_1) \otimes V(\lambda_2, \mu_2, \rho_2)$. Let $X$ and $Y$ denote the operators defined by the actions of $x$ and $y$ on the tensor product, respectively. Since $\Delta(x^n) = x^n \otimes 1 + b^n \otimes x^n$ and $\Delta(y^n) = y^n \otimes c^{-n} + 1 \otimes y^n$, $X^n$ acts on the tensor product by multiplication by the scalar $L = \lambda_{1}^{n} + \rho_1(h_{1}^n)\lambda_{2}^{n}$ and $Y^n$ acts as multiplication by $M = \rho_2 (h_{2}^{-n})\mu_{1}^{n} + \mu_{2}^n$.

For $j = 0, 1, \ldots, n-1$, consider the subspaces 
\[U_j := \text{span}\{v_i \otimes w_j \mid 0 \le i \le n-1\}\]
and
\[W_j := \text{span}\{v_j \otimes w_i \mid 0 \le i \le n-1\}.\]
Then each $U_j$ is stable under the action of $X$ and each $W_j$ is stable under the action of $Y$. The tensor product $V$ decomposes into the direct sum 
\[V = \oplus_{j=0}^{n-1} U_j = \oplus_{j=0}^{n-1}W_j.\]

(i) Suppose that $L$ and $M$ are nonzero. $X^n$ acts as multiplication by $L$ on the tensor product and therefore on each $U_k$. Hence, the minimal polynomial of $X|_{U_k}$ divides $t^n - L$. Since $L$ is nonzero and $\mathbb{K}$ is algebraically closed with characteristic zero, the polynomial $t^n - L$ has $n$ distinct roots in $\mathbb{K}$. Hence $X|_{U_k}$ is diagonalizable for every $k$, and therefore $X$ is diagonalizable on $V$, with the eigenvalues are equal to some $n$th roots of $L$. Similarly, $Y$ is diagonalizable on $V$, with eigenvalues being equal to some $n$th roots of $M$.

Let us write $\xi:= \chi(a)$. From $xa = \xi ax$, it follows that if $v$ is an eigenvector of $X$ with eigenvalue $l$, then 
\[X(av) = \xi lav\]
so the action of $a$ sends $l$-eigenspaces isomorphically onto $\xi l$-eigenspaces. Therefore, all the $n$th roots $l, \xi l, \ldots, \xi^{n-1}l$ of $L$ occur with the same multiplicity. Since $\dim V = n^2$, each root occurs with multiplicity $n$. Similarly, since $ya = \xi^t ay$, if $Yv = mv$, then 
\[Y(av) = \xi^t mav\]
and since $t$ and $n$ are relatively prime, the eigenvalues 
\[m, \xi^t m, \xi^{2t}m, \ldots, \xi^{(n-1)t}m \]
are all $n$th roots of $M$, again each with multiplicity $n$.

Since $X$ and $Y$ commute, the tensor product $V$ has a basis of common eigenvectors of $X$ and $Y$. Let $v_1$ be a common eigenvector such that $Xv_1 = l v_1$ and $Yv_1 = m_1 v_1$. The vectors $v_1, av_1, \ldots, a^{n-1}v_1$ are linearly independent, as they have distinct eigenvalues since $\xi$ is a primitive $n$-th root of unity. Moreover, their span is the torsion-free simple $H$-module $V_1 = V(l, m_1, \rho_1\rho_2)$. Since each eigenvalue has multiplicity $n$, we can choose a common eigenvector $v_2$ that is not in $V_1$ with the same $X$-eigenvalue $xv_2 = l v_2$ and $yv_2 = m_2 v_2$, giving another torsion-free simple $H$-module $V_2 = V(l, m_2, \rho_1\rho_2)$. Continuing this way, we find out that the tensor product has a direct sum decomposition into torsion-free simples
\[V = \oplus_{i=1}^{n}V(l, m_i, \rho_1\rho_2)\]
where $l$ is an $n$th root of $L$ and the $m_i$ are some $n$th roots of $M$. This means that in the representation ring, we have
\[[V(\lambda_1, \mu_1, \rho_1)]\cdot [V(\lambda_2, \mu_2, \rho_2)] = \sum_{i=1}^{n}[V(l,m_i,\rho_1\rho_2)].\]

(ii) Now suppose that $L \neq 0$ and $M = 0$. In this case $Y$ acts as zero on every composition factor of $V$. Since $Y$ acts nilpotently on the eigenspace $E_l$, there is a $Y$-stable flag 
\[0 = F_0 \subset F_1 \subset \cdots \subset F_n = E_l\]
with $\dim F_i / F_{i-1} = 1$ and $YF_i \subset F_{i-1}$. The subspaces 
\[\widetilde{F_i} = \sum_{k=0}^{n-1}a^k F_i\]
form an $H$-stable filtration of $V$ where each quotient
\[\widetilde{F_i}/\widetilde{F}_{i-1}\]
is isomorphic to $V(l, 0, \rho_1 \rho_2)$. Hence, in the representation ring, we have
\[[V(\lambda_1, \mu_1, \rho_1)]\cdot [V(\lambda_2, \mu_2, \rho_2)] = n[V(l, 0, \rho_1\rho_2)]\]

(iii) The case in which $L = 0$ and $M \neq 0$ is symmetrical to (ii) and we have 
\[[V(\lambda_1, \mu_1, \rho_1)]\cdot[V(\lambda_2, \mu_2, \rho_2)] = n[V(0, m, \rho_1\rho_2)]\]
where $m$ is an $n$th root of $M$.

(iv) Finally, we consider the case where $M$ and $L$ are both zero. In this case, $X$ and $Y$ are nilpotent operators, and therefore every vector in the tensor product is $x$- and $y$-torsion. Consequently, composition factors are all one-dimensional torsion modules. We analyze which simples $\mathbb{K}_\sigma$ can appear as a composition factor. 

The tensor product has a subspace decomposition
\[V = \oplus_{k=0}^{n-1}W_k\]
where 
\[W_k = \mathrm{span}\{v_i \otimes w_{i+k} \mid i = 0,1, \ldots, n-1\}.\]
We have
\[a\cdot v_i \otimes w_{i+k} = v_{i+1}\otimes w_{i+1+k}\]
and so each subspace $W_k$ is stable under the action of $a$ (indices calculated modulo $n$). The action of $a$ on $W_k$ is a cyclic shift, and as such its eigenvalues are the $n$-th roots of unity. Since $\xi = \chi(a)$ is a primitive $n$-th root of unity, these eigenvalues are precisely $1, \xi, \ldots, \xi^{n-1}$.

Let $\sigma \in \widehat{G}$ and consider a one-dimensional torsion simple module $\mathbb{K}_\sigma$ which appears as a composition factor of the tensor product. Since every $g \in N$ acts on the tensor product as $\rho_1 \rho_2(g)$, we see that the restriction of $\sigma$ to $N$ is $\rho_1 \rho_2$. Since every $g \in G$ can be written as $g = ha^i$ for some $h \in N$ and $0\le i \le n-1$, the character $\sigma$ must be of the form $\sigma(ha^i) = \rho_1 \rho_2(h)\sigma(a)^i$. From the above paragraph, we know that $\sigma(a) = \chi(a)^i = \xi^i$ for some $i$. 

Therefore, we have 
\[[V(\lambda_1, \mu_1, \rho_1)]\cdot[V(\lambda_2, \mu_2, \rho_2)] = \sum_{i = 0}^{n-1}n\left[\mathbb{K}_{(\rho_1 \rho_2, \xi^i)}\right]\]
in the representation ring, where $(\rho_1\rho_2, \xi^i)$ denotes the unique character in $G$ whose restriction to $N$ is $\rho_1\rho_2$ and which sends $a$ to $\xi^i$. 

We gather everything in the following theorem:
\begin{theorem}[Tensor product rules in the skew group ring case]
Assume that \(e=0\), that \(\chi\) has finite order \(n\), and that
\(\eta=\chi^{t}\) with \((t,n)=1\). Let
\[N=\ker(\chi)=\ker(\eta),\qquad G/N=\langle aN\rangle, \]
and write \(\xi=\chi(a)\). Suppose also that
\[b=a^r h_1,\qquad c=a^s h_2\]
with \(h_1,h_2\in N\).

\begin{enumerate}

\item \textbf{Torsion vs torsion.}
For \(\rho,\sigma\in\widehat{G}\),
\[[\mathbb K_\rho]\,[\mathbb K_\sigma] = [\mathbb K_{\rho\sigma}].\]

\item \textbf{Torsion vs torsion-free.}
Let \(\rho\in\widehat{G}\), \(\sigma\in\widehat{N}\), and
\(\lambda,\mu\in\mathbb K\), not both zero. Then
\[[\mathbb K_\rho]\,[V(\lambda,\mu,\sigma)] = [V(\rho(b)\lambda,\mu,(\rho|_N)\sigma)],\]
and
\[[V(\lambda,\mu,\sigma)]\,[\mathbb K_\rho] = [V(\lambda,\rho(c^{-1})\mu,(\rho|_N)\sigma)].\]

\item \textbf{Torsion-free vs torsion-free.}
Let
\[V_1=V(\lambda_1,\mu_1,\rho_1),\qquad V_2=V(\lambda_2,\mu_2,\rho_2)\]
be torsion-free simple modules with \(\rho_1,\rho_2\in\widehat{N}\) and
\(\lambda_i,\mu_i\in\mathbb K\), \((\lambda_i,\mu_i)\neq(0,0)\). Set
\[L=\lambda_1^n+\rho_1(h_1^n)\lambda_2^n, \qquad M=\rho_2(h_2^{-n})\mu_1^n+\mu_2^n.\]

\begin{enumerate}
    \item If \(L\neq 0\) and \(M\neq 0\), fix an \(n\)-th root \(\ell\) of \(L\).
    Let \(m_1,\ldots,m_n\) be the eigenvalues (with multiplicity) of
    \(Y|_{E_\ell}\), where \(E_\ell=\ker(X-\ell I)\). Then
    \[[V_1][V_2] = \sum_{i=1}^n [V(\ell,m_i,\rho_1\rho_2)].\]

    \item If \(L\neq 0\) and \(M=0\), then for any \(n\)-th root \(\ell\) of \(L\),
    \[[V_1][V_2] = n[V(\ell,0,\rho_1\rho_2)].\]

    \item If \(L=0\) and \(M\neq 0\), then for any \(n\)-th root \(m\) of \(M\),
    \[[V_1][V_2] = n[V(0,m,\rho_1\rho_2)].\]

    \item If \(L=0=M\), then
    \[    [V_1][V_2]  = \sum_{i=0}^{n-1} n\,[\mathbb K_{(\rho_1\rho_2,\xi^i)}].\]
\end{enumerate}
\end{enumerate}
\end{theorem}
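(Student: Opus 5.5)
The proof assembles the case computations of this section. We work in the presentation $H\simeq \KK[x,y]\# G$ of Proposition~\ref{proposition:change of variables in the zero derivation case}. In this presentation $x$ is $(1,b)$-primitive and $y$ is $(c^{-1},1)$-primitive, so
\[\Delta(x)=x\otimes 1+b\otimes x,\qquad \Delta(y)=y\otimes c^{-1}+1\otimes y .\]
The simple modules are classified by Lemma~\ref{lemma:torsion modules in zero derivation case} and Corollaries~\ref{corollary:x-torsionfree y-torsion simples in zero derivation}, \ref{cor:y-torsionfree-x-torsion-simples}, \ref{x-y-torsionfree-skewgroupcase}. With $a$ a generator of $G/N$, they are the modules $\KK_\rho$ and $V(\lambda,\mu,\sigma)$. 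Since classes in the representation ring depend only on composition factors, in each case it suffices to compute the action of $x$, $y$, $a$ and $N$ on a suitable basis of the tensor product and then identify a composition series.

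For (1), $x$ and $y$ act by zero on $v\otimes w$ and $g$ acts by $\rho(g)\sigma(g)$, so the tensor product is $\KK_{\rho\sigma}$.

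For (2), we compute $x(u\otimes v_i)=\rho(b)\,u\otimes xv_i$ and $y(u\otimes v_i)=u\otimes yv_i$. The element $a^jh$ acts by $\rho(a^jh)\sigma(h)$ times the shift. Rescaling the basis by $u\otimes v_i\mapsto \rho(a^i)\,u\otimes v_i$ turns the $a$-action back into the pure cyclic shift, and the tensor product is then recognised as $V(\rho(b)\lambda,\mu,(\rho|_N)\sigma)$. The right tensor product is symmetric, with $y$ picking up the factor $\rho(c^{-1})$.

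For (3), the plan is as follows.
\begin{enumerate}
\item Use the $q$-binomial argument of Proposition~\ref{proposition:hopf algebra quotients} (here $(b\otimes x)(x\otimes 1)=\xi^{-r}(x\otimes 1)(b\otimes x)$ with $\xi^{r}$ of order dividing $n$, so the mixed terms must be checked to vanish). This gives $\Delta(x^n)=x^n\otimes 1+b^n\otimes x^n$ and the analogous formula for $y^n$, so $X^n=L$ and $Y^n=M$ act as scalars. Here $b^n=a^{rn}h_1^n$ and $a^n\in N$ must be absorbed into the $\rho_1$-values; this bookkeeping is where I would check the constants $\rho_1(h_1^n)$ and $\rho_2(h_2^{-n})$ most carefully.
\item In case (a), $t^n-L$ and $t^n-M$ are separable, so $X$ and $Y$ are diagonalisable and commute (because $xy=yx$). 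The relations $xa=\xi ax$ and $ya=\xi^t ay$ show that $a$ permutes the eigenspaces cyclically. Hence each $X$-eigenspace $E_\ell$ has dimension $n$ and is $Y$-stable. Choose a $Y$-eigenbasis $u_1,\dots,u_n$ of $E_\ell$. Each $\operatorname{span}\{a^ku_i\}$ is a copy of $V(\ell,m_i,\rho_1\rho_2)$, and these spans sum directly to $V$.
\item In case (b), $Y$ is nilpotent on $E_\ell$. Take a $Y$-stable flag in $E_\ell$ and saturate it under $a$. The resulting filtration of $V$ is $H$-stable, and its $n$ factors are all $V(\ell,0,\rho_1\rho_2)$.
\item Case (c) is symmetric to (b).
\item In case (d), all composition factors are one-dimensional torsion modules $\KK_\sigma$ with $\sigma|_N=\rho_1\rho_2$. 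To determine $\sigma(a)$, decompose $V=\bigoplus_k W_k$ into $a$-stable subspaces on which $a$ acts as an $n$-cycle. Each eigenvalue $\xi^i$ then occurs $n$ times, and since the composition multiplicities of $\KK_\sigma$ are read off from the $a$-eigenvalue multiplicities, each $\KK_{(\rho_1\rho_2,\xi^i)}$ occurs $n$ times.
\end{enumerate}

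The main obstacle is step (d). There one must justify that the $a$-action on $V$ is semisimple with the stated eigenvalue multiplicities (here $a$ acts on $w$-indices with the $\rho$-twists, so $a^n\in N$ acts by a scalar that may need rescaling, as in (2)). One must also check that these multiplicities pass to the composition factors, which follows because the composition factors of a finite-dimensional module restrict to $\KK[\langle a\rangle]$ with the same characters counted with multiplicity.
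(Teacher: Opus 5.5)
Your proposal follows the paper's proof essentially step for step. It uses the same direct computation in (1) and the same rescaling $u\otimes v_i\mapsto\rho(a^i)\,u\otimes v_i$ in (2). In (3) it uses the same scalars $L,M$, the same diagonalisation of $X$ with $a$ carrying $E_{\ell}$ to $E_{\xi\ell}$, the same $Y$-stable flag in $E_\ell$ saturated under $a$, and the same decomposition $V=\bigoplus_k W_k$ with $a$ acting as an $n$-cycle. Your characteristic-polynomial count in (d) just makes explicit the multiplicity $n$ that the paper leaves implicit, and algebraic multiplicities suffice there, so semisimplicity of $a$ is not actually needed.

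The check you flag in step 1 is genuinely needed, and the paper passes over it silently. We have $\binom{n}{k}_p=0$ for all $0<k<n$ only when $p=\chi(b)^{-1}=\xi^{-r}$ has order exactly $n$. Since $\eta(b)=\chi(c)^{-1}$ gives $rt\equiv -s \pmod n$, this is equivalent to $\gcd(s,n)=1$, which is also what makes the factor $\eta(c)^{\pm1}=\xi^{\pm st}$ in $\Delta(y^n)$ primitive. It holds under the Section~5 running hypothesis that $\chi(c)$ is a primitive $n$th root of unity, and you should invoke that hypothesis explicitly. Without it, $X^n$ need not act as a scalar on $V_1\otimes V_2$. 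For example, with $b=c=1$, $n=2$ and $\lambda_1\lambda_2\neq 0$, $X^2$ has the two distinct eigenvalues $(\lambda_1\pm\lambda_2)^2$.
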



\begin{bibdiv}
\begin{biblist} 

\bib{Beattie}{article}{
   author={Beattie, M.},
   author={D\u{a}sc\u{a}lescu, S.},
   author={Gr\"{u}nenfelder, L.},
   title={Constructing pointed Hopf algebras by Ore extensions},
   journal={J. Algebra},
   volume={225},
   date={2000},
   number={2},
   pages={743--770},
   issn={0021-8693},
   review={\MR{1741560}},
   doi={10.1006/jabr.1999.8148},
}

\bib{Brown-O'Hagan-Zhang}{article}{
   author={Brown, K. A.},
   author={O'Hagan, S.},
   author={Zhang, J. J.},
   author={Zhuang, G.},
   title={Connected Hopf algebras and iterated Ore extensions},
   journal={J. Pure Appl. Algebra},
   volume={219},
   date={2015},
   number={6},
   pages={2405--2433},
   issn={0022-4049},
   review={\MR{3299738}},
   doi={10.1016/j.jpaa.2014.09.007},
}

\bib{Fantino-Garcia}{article}{
   author={Fantino, Fernando},
   author={Garcia, Gaston Andr\'es},
   title={On pointed Hopf algebras over dihedral groups},
   journal={Pacific J. Math.},
   volume={252},
   date={2011},
   number={1},
   pages={69--91},
   issn={0030-8730},
   review={\MR{2862142}},
   doi={10.2140/pjm.2011.252.69},
}

\bib{Gelaki}{article}{
   author={Gelaki, Shlomo},
   title={Pointed Hopf algebras and Kaplansky's 10th conjecture},
   journal={J. Algebra},
   volume={209},
   date={1998},
   number={2},
   pages={635--657},
   issn={0021-8693},
   review={\MR{1659891}},
   doi={10.1006/jabr.1998.7513},
}

\bib{Goodearl}{article}{
   author={Goodearl, K. R.},
   title={Prime ideals in skew polynomial rings and quantized Weyl algebras},
   journal={J. Algebra},
   volume={150},
   date={1992},
   number={2},
   pages={324--377},
   issn={0021-8693},
   review={\MR{1176901}},
   doi={10.1016/S0021-8693(05)80036-5},
}

\bib{Halbig}{article}{
   author={Halbig, Sebastian},
   title={Generalized Taft algebras and pairs in involution},
   journal={Comm. Algebra},
   volume={49},
   date={2021},
   number={12},
   pages={5181--5195},
   issn={0092-7872},
   review={\MR{4328530}},
   doi={10.1080/00927872.2021.1939043},
}

\bib{Halbig-Krahmer}{article}{
   author={Halbig, Sebastian},
   author={Kr\"ahmer, Ulrich},
   title={A Hopf algebra without a modular pair in involution},
   conference={
      title={Geometric methods in physics XXXVII},
   },
   book={
      series={Trends Math.},
      publisher={Birkh\"auser/Springer, Cham},
   },
   isbn={978-3-030-34072-8},
   isbn={978-3-030-34071-1},
   date={[2019] \copyright 2019},
   pages={140--142},
   review={\MR{4143889}},
   doi={10.1007/978-3-030-34072-8\_14},
}


\bib{Kaplansky}{book}{
   author={Kaplansky, Irving},
   title={Bialgebras},
   series={Lecture Notes in Mathematics},
   publisher={University of Chicago, Department of Mathematics, Chicago, IL},
   date={1975},
   pages={iv+57},
   review={\MR{0435126}},
}

\bib{Kassel}{book}{
   author={Kassel, Christian},
   title={Quantum groups},
   series={Graduate Texts in Mathematics},
   volume={155},
   publisher={Springer-Verlag, New York},
   date={1995},
   pages={xii+531},
   isbn={0-387-94370-6},
   review={\MR{1321145}},
   doi={10.1007/978-1-4612-0783-2},
}

\bib{McConnell-Robson}{book}{
   author={McConnell, J. C.},
   author={Robson, J. C.},
   title={Noncommutative Noetherian rings},
   series={Pure and Applied Mathematics (New York)},
   note={With the cooperation of L. W. Small;
   A Wiley-Interscience Publication},
   publisher={John Wiley \& Sons, Ltd., Chichester},
   date={1987},
   pages={xvi+596},
   isbn={0-471-91550-5},
   review={\MR{0934572}},
}

\bib{Muller}{article}{
   author={M\"{u}ller, Eric},
   title={Finite subgroups of the quantum general linear group},
   journal={Proc. London Math. Soc. (3)},
   volume={81},
   date={2000},
   number={1},
   pages={190--210},
   issn={0024-6115},
   review={\MR{1757051}},
   doi={10.1112/S002461150001248X},
}


\bib{Panov}{article}{
   author={Panov, A. N.},
   title={Ore extensions of Hopf algebras},
   language={Russian, with Russian summary},
   journal={Mat. Zametki},
   volume={74},
   date={2003},
   number={3},
   pages={425--434},
   issn={0025-567X},
   translation={
      journal={Math. Notes},
      volume={74},
      date={2003},
      number={3-4},
      pages={401--410},
      issn={0001-4346},
   },
   review={\MR{2022506}},
   doi={10.1023/A:1026115004357},
}

\bib{Passman}{book}{
   author={Passman, Donald S.},
   title={Infinite crossed products},
   series={Pure and Applied Mathematics},
   volume={135},
   publisher={Academic Press, Inc., Boston, MA},
   date={1989},
   pages={xii+468},
   isbn={0-12-546390-1},
   review={\MR{0979094}},
}

\bib{Takeuchi}{article}{
   author={Takeuchi, Mitsuhiro},
   title={Representations of the Hopf algebra $U(n)$},
   conference={
      title={Hopf algebras and generalizations},
   },
   book={
      series={Contemp. Math.},
      volume={441},
      publisher={Amer. Math. Soc., Providence, RI},
   },
   isbn={978-0-8218-3820-4},
   date={2007},
   pages={155--174},
   review={\MR{2381540}},
   doi={10.1090/conm/441/08504},
}
\bib{Wang-Wu-Tan}{article}{
    author={Wang, Jing},
    author={Wu, Zhixiang},
    author={Tan, Yan},
    title={Some Hopf algebras related to $\mathfrak{sl}_2$},
    journal={Comm. Algebra},
    volume={49},
    date={2021},
    number={8},
    pages={3335--3368},
    issn={0092-7872},
    review={\MR{4283152}},
    doi={10.1080/00927872.2021.1894568},
}

\bib{Wu-Zhang}{article}{
   author={Wu, Q.-S.},
   author={Zhang, J. J.},
   title={Noetherian PI Hopf algebras are Gorenstein},
   journal={Trans. Amer. Math. Soc.},
   volume={355},
   date={2003},
   number={3},
   pages={1043--1066},
   issn={0002-9947},
   review={\MR{1938745}},
   doi={10.1090/S0002-9947-02-03106-9},
}
\end{biblist}
\end{bibdiv}
\end{document}